\numberwithin{equation}{section}
\newtheorem{thm}{Theorem}[section]
\newtheorem{theorem}[thm]{Theorem}
\newtheorem{lemma}[thm]{Lemma}
\newtheorem{corollary}[thm]{Corollary}
\newtheorem{proposition}[thm]{Proposition}
\theoremstyle{definition}
\newtheorem{remark}[thm]{Remark}
\newtheorem{definition}[thm]{Definition}
\newtheorem{definition-thm}[thm]{Definition-Theorem}
\newcommand{\erf}{\operatorname{erf}}
\numberwithin{equation}{section}
\newcommand{\Tr}{\operatorname{Tr}}
\newcommand{\Id}{\operatorname{Id}}
\newcommand{\R}{\mathbb{R}}
\def\beq{\begin{eqnarray}}
	\def\eeq{\end{eqnarray}}
\newcommand{\hvff}{\textrm{hvff}}
\newcommand{\ff}{\textrm{ff}}
\newcommand{\sff}{\textrm{sf}}
\newcommand{\tf}{\textrm{tf}}
\newcommand{\td}{\textrm{td}}
\newcommand{\hvlf}{\textrm{hvlf}}
\newcommand{\hvrf}{\textrm{hvrf}}
\definecolor{pink}{rgb}{1,0,1}
\definecolor{purple}{rgb}{0.4,0.2,1}
\newcommand{\erfc}{\operatorname{erfc}}
\newcommand{\pa}{\partial}
\newcommand{\eps}{\varepsilon}
\newcommand{\Sone}{{\mathbf{S}}^1}
\newcommand{\cA}{{\mathcal{A}}}
\newcommand{\cH}{{\mathcal{H}}}
\newcommand{\cV}{\mathcal{V}}
\newcommand{\cC}{\mathcal{C}}
\newcommand{\cL}{\mathcal{L}}
\newcommand{\cO}{\mathcal{O}}
\begin{document}
	
	\title[Heat kernel on polygonal domains]{The heat kernel on curvilinear polygonal domains in surfaces}


	\author[M. Nursultanov]{Medet Nursultanov}
	\address{Department of Mathematics and Statistics \\University of Helsinki\\ Finland}
	 \email{medet.nursultanov@gmail.com}
		
		\author[J. Rowlett]{Julie Rowlett}
		\address{Mathematical Sciences \\ Chalmers University  and the University of Gothenburg \\ 412 96 Gothenburg, Sweden}
			\email{julie.rowlett@chalmers.se}

			\author[D. Sher]{David Sher}
			\address{Department of Mathematical Sciences\\  DePaul University \\ 2320 N Kenmore Ave \\ Chicago, IL 60614, USA}
				\email{dsher@depaul.edu}

	\begin{abstract}We construct the heat kernel on curvilinear polygonal domains in arbitrary surfaces for Dirichlet, Neumann, and Robin boundary conditions as well as mixed problems, including those of Zaremba type.  We compute the short time asymptotic expansion of the heat trace and apply this expansion to demonstrate a collection of results showing that corners are spectral invariants. 
	\end{abstract}
	
	\subjclass{Primary:  58J35, 35K08, 58J53, 58J50.  Secondary: 44A10, 35A22, 47A60}
	
	\keywords{curvilinear polygon, surface with corners, corner, edge, conic singularity, heat kernel, heat trace, spectrum, isospectral, spectral invariant, inverse spectral problem, Robin boundary condition, vertex, Dirichlet boundary condition, Neumann boundary condition, Zaremba boundary condition, mixed boundary conditions}

	\maketitle

	\section{Introduction} \label{s:intro} 
	If two compact Riemannian manifolds $(M, g)$ and $(M', g')$ are isospectral, meaning they have the same Laplace spectrum, then they need not be isometric. However, isospectrality does imply that $M$ and $M'$ both have the same dimension, $n$.  Moreover, they also have the same $n$-dimensional volume.  Thus, both dimension and volume are spectral invariants, in the sense that they are determined by the spectrum.  This fact follows from Weyl's law \cite{weyl}, proven over one hundred years ago.  It is natural to ask: what other geometric features are spectral invariants?
	
	The next geometric spectral invariant was discovered by Pleijel \cite{pleijel} some forty years after Weyl's law.  For an $n$-dimensional Riemannian manifold with smooth boundary, the $n-1$ dimensional volume of the boundary is a spectral invariant.  About ten years later, McKean and Singer \cite{mc-s} proved that certain curvature integrals are also spectral invariants.  For smooth surfaces and smoothly bounded planar domains, McKean \& Singer \cite{mc-s} and independently M. Kac \cite{kac} proved that the Euler characteristic is a spectral invariant. By the Gauss-Bonnet Theorem, this shows that the number of holes in a planar domain is a geometric spectral invariant for a planar domain.
	
	The tactic of both McKean \& Singer and Kac was to use the existence of a short time asymptotic expansion for the heat trace, together with the calculation of the coefficients in this expansion.  Recall that the \emph{heat kernel} $H_M(t,z,z')$ on a 
	
	Riemannian manifold $(M,g)$ 
	is the fundamental solution of the heat equation on $M$:
	\begin{equation}\begin{cases} (\partial_t+\Delta_z)H_M(t,z,z')=0;\\ \lim_{t\to 0}H_M(t,z,z')=\delta_z(z').\end{cases}\end{equation}
	As long as the eigenvalues are discrete and approach $\infty$ sufficiently quickly, which is the case in all geometric settings considered here, the \emph{heat trace} is the trace of this kernel, and satisfies
	\begin{equation}\Tr H_M(t)=\int_{M}H_M(t,z,z)\, dz=\sum_{j=1}^{\infty}e^{-\lambda_j t}.\end{equation}
	Above, $\lambda_j$ are the eigenvalues of the Laplacian $\Delta$ on $M$, arranged in increasing order. As a consequence, the heat trace is a spectral invariant. Therefore the coefficients in its asymptotic expansion as $t\to 0$ are also spectral invariants.   The existence and calculation of an asymptotic expansion for the heat trace is a powerful method for producing spectral invariants. This program has been carried out extensively both for smooth manifolds and for manifolds with boundary \cite{mc-s}.
	
	Here, we are interested in the heat kernel on curvilinear polygonal domains which are subsets of smooth surfaces.  This includes curvilinear polygonal domains in the plane, as well as more exotic non-planar examples.  We are interested in the heat kernel for such domains in part because it may allow us to determine new geometric spectral invariants.  Indeed, we show in \S 6 that in general, the presence or lack of vertices is a spectral invariant for Dirichlet, Neumann, Robin, and mixed boundary conditions.  Moreover, we shall see there that a jump in boundary condition is also a spectral invariant.  
	
	Let us now introduce our geometric setting.
	
	\begin{definition}\label{def:curvpoly} We say that $\Omega$ is a \em curvilinear polygonal domain \em if it is a compact subset of a smooth Riemannian surface $(M,g)$ with piecewise smooth boundary and a vertex at each non-smooth point of $\partial\Omega$. A \em vertex \em is a point $p$ on the boundary of $\Omega$ at which the following are satisfied.
		\begin{enumerate} 
			\item The boundary in a neighborhood of $p$ is defined by a continuous curve $\gamma(t): (-a, a) \to M$ for $a > 0$ with $\gamma(0) = p$.  We require that $\gamma$ is smooth {on $(-a,0]$ and $[0,a)$}, with $||\dot \gamma(t)|| =1$ for all $t \in (-a, a)$, and  such that 
			\[\lim_{t \uparrow 0} \dot \gamma (t) = v_1, \quad \lim_{t \downarrow 0} \dot \gamma  (t) = v_2,\]
			for some vectors $v_1,v_2\in T_{p}M$, with $- v_1 \neq v_2$.
			\item The \em interior angle \em at the point $p$ is the \em interior angle \em at that corner, which is the angle between the vectors $-v_1$ and $v_2$. 
		\end{enumerate}
		Note that requiring $-v_1$ and $v_2$ to be distinct means that the interior angle will be an element of $(0,2\pi)$, which rules out inward and outward pointing cusps. An angle of $\pi$ is allowed. 
	\end{definition} 
	
	A vertex in a curvilinear polygonal domain is an example of a conical singularity where the link is a one-dimensional manifold with boundary.  Moreover, it is a ``non-exact'' conical singularity in the sense that the curve $\gamma$ defining the boundary near a vertex may have non-zero geodesic curvature on the entire interval $(-a, a)$.  For curvilinear domains in the plane, this means that there need not be a neighborhood of the vertex in which the edges are straight.  This geometric setting is therefore not contained within the literature for either (1) conical singularities whose link is a compact manifold without boundary nor for (2) planar polygons for which the edges are straight near the vertices. 
	
	There is substantial work in the literature on heat trace expansions in the settings (1) and (2).  For conical singularities with no boundary on the link, a non-exhaustive list of works concerning the heat kernel and its trace is:  \cite{seeley1}, \cite{karol}, \cite{koko}, \cite{loya1}, \cite{loya2}, \cite{gil1}, \cite{gil-loya}, \cite{mooers}.  In the case (2) of vertices which locally have straight edges, a non-exhaustive list includes \cite{skuj}, \cite{nest}, \cite{kap}, and \cite{koz}. 
	
	For polygonal domains in the plane with the Dirichlet boundary condition, Fedosov showed in the 1960s that the vertices produce an extra term in the short time asymptotic expansion of the heat trace \cite{fedosov}, \cite{fedosov1}.  This term appears in the coefficient of $t^0$.  Its most simplified form and calculation can be found in a paper of van den Berg and Srisatkunarajah \cite{vdbs}, although the expression there is originally due to unpublished work of Ray, and is mentioned in both \cite{kac} and \cite{mc-s}.
	
	Although it has been widely assumed that analogous results for the heat trace expansion hold for curvilinear polygons, a rigorous proof even in the planar Dirichlet case was not given until \cite{corners}. Similar results hold for Neumann boundary conditions, see \cite{htap}.  Although Robin conditions have been studied on manifolds with boundary \cite{gilkey,zayed}, to our knowledge there is no work in the  literature about heat trace expansions with Robin conditions in the presence of corners of arbitrary angles, even in the plane.  For certain corner angles, however, we refer to the physical approach of \cite{a-dowker}. Outside the planar case, or even in the planar case with mixed boundary conditions, less is known.  For the mixed boundary condition, also known as Zaremba boundary condition, references include \cite{avram}, \cite{seeleyz}, \cite{jlnp}, \cite{lpt}.  
	
	Our geometric microlocal methods allow us to handle the general case of compact curvilinear polygonal domains in surfaces, with any combination of Dirichlet, Neumann, and/or Robin boundary conditions on the various smooth boundary components. 
	The sign convention for our Laplacian, in local coordinates, with respect to the Riemannian metric, $g$, on a surface is 
	$$\Delta = - \frac{1}{\sqrt{\det(g)}} \sum_{i,j=1} ^2 \pa_i \sqrt{\det(g)} g^{ij} \pa_j.$$
	Our convention for the Robin boundary condition on any portion of the boundary is: 
	$$\left . \frac{\pa u}{\pa \nu} \right|_{\pa \Omega} = \left . \kappa u \right|_{\pa \Omega}.$$
	Here, the derivative on the left is the \em inward \em pointing normal derivative, and therefore, on the right, $\kappa$ is a \em non-negative \em function. Under this condition the spectrum is non-negative. We assume throughout, for simplicity, that $\kappa$ is smooth.
	
	Our main result is:  
	
	\begin{theorem}\label{thm:mainthm} Let $\Omega$ be a curvilinear polygonal domain in a smooth surface with finitely many vertices $V_1,\ldots, V_n$ of angles $\alpha_1,\ldots,\alpha_n$. Define its edges $E_1,\ldots,E_n$ by letting $E_j$ be the segment of the boundary between $V_{j-1}$ and $V_j$, with subscripts taken mod $n$. Let $\mathcal E_D$, $\mathcal E_N$, and $\mathcal E_R$ be three disjoint sets whose union is $\{1,\dots,n\}$. For each $j\in\mathcal E_D$, $\mathcal E_N$, and $\mathcal E_R$, we impose Dirichlet, Neumann, and Robin conditions with parameter $\kappa_j(x)$, respectively, along $E_j$. Assume that all functions $\kappa_j(x)$ are non-negative and smooth.
		
		Let $\mathcal V_{=}$ be the set of $j$ for which vertex $V_j$ has either zero or two Dirichlet edges adjacent to it, i.e. either both $j$ and $j+1\in\mathcal E_D$ or neither are. Conversely, let $\mathcal V_{\neq}$ be the set of $j$ for which $V_j$ has exactly one adjacent Dirichlet edge. Also let $K(z)$ and $k_g(x)$ be the Gauss curvature and geodesic/mean curvature of $\Omega$ and $\partial\Omega$ respectively.
		
		Then the heat trace $\Tr H^{\Omega}(t)$ for the Laplacian with those boundary conditions, and with the Friedrichs extension at each vertex, 
		has a complete polyhomogeneous conormal expansion in $t$ as $t\to 0$. Moreover, the first few terms of this expansion have the form
		\[\Tr H^{\Omega}(t)=a_{-1}t^{-1}+a_{-1/2}t^{-1/2}+a_0+O(t^{1/2}\log t),\]
		where:
		\begin{gather}
			a_{-1}=\frac{A(\Omega)}{4\pi};\\
			a_{-1/2}=\frac{1}{8\sqrt{\pi}}(\sum_{j\notin\mathcal E_D}\ell(E_j)-\sum_{j\in\mathcal E_D}\ell(E_j));\\
			a_0=\frac{1}{12\pi}\int_{\Omega}K(z)\, dz+\frac{1}{12\pi}\int_{\partial\Omega}k_g(x)\, dx-\frac{1}{2\pi}\sum_{j\in\mathcal E_R}\int_{E_j}\kappa_j(x)\, dx\\
			+\sum_{j\in V_{=}}\frac{\pi^2-\alpha_j^2}{24\pi\alpha_j}+\sum_{j\in V_{\neq}}\frac{-\pi^2-2\alpha_j^2}{48\pi\alpha_j}.
		\end{gather}
	\end{theorem}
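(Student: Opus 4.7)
The overall strategy is to construct the heat kernel as a polyhomogeneous conormal distribution on an appropriately blown-up ``heat space'' $M_h$, built from $\Omega\times\Omega\times[0,\infty)_{\sqrt t}$ by successively blowing up the vertices at time zero and then the temporal diagonal. This produces boundary hypersurfaces corresponding to: the front face $\ff$ (blow-up of the diagonal at $t=0$, encoding the Euclidean local heat kernel behavior), side faces $\sff$ (one per smooth edge, encoding the half-plane heat kernel), vertex/corner faces $\hvff$ (one per vertex, encoding the infinite wedge heat kernel), and the temporal face $\tf$ along $t=0$ away from the diagonal. Each face carries a model problem whose explicit solution dictates the leading behavior of the kernel there. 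The full kernel is obtained by patching these models and iteratively improving the error using the composition structure inherited from the blown-up space, with polyhomogeneity preserved throughout; the constructed object is then identified with the true heat semigroup by uniqueness.

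The model problems are solved in closed form: at $\ff$, the Euclidean heat kernel; at each side face, the half-plane heat kernel with the prescribed Dirichlet, Neumann, or Robin condition (obtained by reflection for $D/N$ and by the explicit Robin Green's function for $R$); at each vertex face, the heat kernel on the infinite wedge of angle $\alpha_j$ with the inherited boundary conditions from the two adjacent rays. The wedge kernel in the same-type case ($DD$, $NN$, or $RR$) is classical and expressible via a Kontorovich--Lebedev or Carslaw contour integral; in the mixed $D/N$ (Zaremba) case, it requires a half-integer Fourier expansion to accommodate the differing conditions on the two rays. An iterative construction then removes successive errors via composition on $M_h$, yielding a polyhomogeneous conormal kernel satisfying the heat equation with the correct initial and boundary data modulo a smoothing remainder.

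The trace expansion follows from the pushforward theorem for polyhomogeneous conormal distributions applied to the restriction to the diagonal of $M_h$: each boundary face contributes terms of a specific order in $t$, whose coefficients can be read off from the leading behavior of the model kernel there. The face $\ff$ produces $A(\Omega)/(4\pi)\cdot t^{-1}$ at leading order together with the Gauss curvature term $\frac{1}{12\pi}\int_{\Omega}K\,dz$ at order $t^0$. Each side face contributes $\pm\ell(E_j)/(8\sqrt{\pi})\cdot t^{-1/2}$, with the sign determined by the boundary condition, along with the geodesic curvature contribution $\frac{1}{12\pi}\int_{\partial\Omega}k_g\,dx$ and, for Robin pieces, the term $\frac{1}{2\pi}\int_{E_j}\kappa_j\,dx$. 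Each vertex face contributes the stated corner term at order $t^0$, computed as the trace of the corresponding wedge model at $t=1$ (equivalently, a residue of its Mellin transform). The principal obstacle is the Zaremba vertex model for $V_{\neq}$: the half-integer separation of variables must be executed carefully to produce the nonstandard value $(-\pi^2 - 2\alpha_j^2)/(48\pi\alpha_j)$, which differs structurally from the classical same-type formula $(\pi^2-\alpha_j^2)/(24\pi\alpha_j)$. A secondary subtlety is ensuring that the curvature of the edges near a vertex (the ``non-exact'' conic geometry, where edges need not be straight) only affects higher-order terms, so that the curvilinear corner reduces to the straight wedge model at order $t^0$; this is handled within the microlocal framework by absorbing edge-curvature contributions into the side face $\sff$ rather than the vertex face $\hvff$.
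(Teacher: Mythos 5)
Your proposal tracks the paper's architecture closely: the same heat space built by blowing up vertices and the temporal diagonal parabolically in $\sqrt t$, the same philosophy of solving model problems at each boundary hypersurface and patching by composition, and the same pushforward-to-the-trace step. The attribution of $A(\Omega)/4\pi$ and $\frac{1}{12\pi}\int K$ to the diagonal/interior face, the boundary-length and geodesic/Robin terms to the side faces, and the $t^0$ corner terms to the vertex faces all match the paper. Your observation that the curvilinear (non-exact) conic structure only affects higher-order terms is also exactly the mechanism the paper exploits: after blowing up the vertices for all time, the Laplacian agrees with the exact straight-sector Laplacian to leading order at the vertex face, so the straight-wedge model suffices.

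There is, however, one concrete misstatement that would derail the argument if taken literally. You assert that the infinite-wedge heat kernel with Robin--Robin boundary conditions is ``classical and expressible via a Kontorovich--Lebedev or Carslaw contour integral.'' It is not: the Robin parameter $\kappa$ introduces a length scale which breaks the dilation symmetry of the wedge, and there is no closed-form $RR$ (or $DR$, $NR$) wedge kernel of the Kontorovich--Lebedev type for general opening angle. The paper sidesteps this entirely by observing that at the vertex face the Robin condition coincides with Neumann to leading order: the Robin heat kernel is constructed as the Neumann heat kernel plus a correction of one order lower at the side face which vanishes to infinite order at the vertex face, so the vertex model for any Robin edge is simply the Neumann model. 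This is why $\kappa$ never enters the corner contribution and why every vertex reduces to one of exactly three exact models ($DD$, $NN$, $DN$). If you instead attempt to write down a Robin wedge kernel, there is no explicit formula to use and the computation of the $t^0$ residue would stall. Relatedly, you state that the Zaremba ($DN$) corner computation ``must be executed carefully'' to produce $(-\pi^2-2\alpha_j^2)/(48\pi\alpha_j)$ but do not indicate how; the key device in the paper is the identity expressing the $DN$ corner contribution at angle $\alpha$ as the $DD$ contribution at angle $2\alpha$ minus the $DD$ contribution at angle $\alpha$ (obtainable either from the half-integer eigenvalue shift in the Cheeger series or by manipulating the Kontorovich--Lebedev Green's function), which immediately yields the stated value from the known $DD$ formula. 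Your write-up needs both the Robin-as-Neumann-at-the-vertex reduction and the $DN$-from-$DD$ trick to actually close the argument.
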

	\begin{remark}
		It is well-known that if the boundary of $\Omega$ is smooth then there are no logarithmic terms in the heat expansion. We do not characterize the nature of logarithmic terms in the expansion in our more general setting.
	\end{remark}
	
	The proof of this result contains several ingredients which may be of independent interest.  The main strategy is to use geometric microlocal analysis to construct the heat kernel on a heat space, denoted by $\Omega_h ^2$, which is created by blowing up $\Omega \times \Omega \times [0, 1)$ along various p-submanifolds. On this heat space we show, in Theorem \ref{thm:5point8}, that the heat kernel has a polyhomogeneous conormal expansion at every boundary hypersurface. Indeed we construct the heat kernel by solving suitable model problems at the various boundary hypersurfaces. This gives a full description of the heat kernel on a curvilinear polygonal domain in a surface, in all asymptotic regimes.  As such this construction is useful for any application in which fine structure information about the heat kernel near $t=0$ is needed.
	
	A major advantage of this method is that a complete asymptotic description of the heat \em kernel, \em rather than just its trace, is obtained. This allows precise asymptotic analysis for expressions such as the gradient of the heat kernel and is likely of interest for future work.
	
	The paper is organized as follows. In \S 2, we develop an integral representation of the heat kernel for infinite circular sectors with Dirichlet, Neumann, and mixed boundary conditions.  We do this by first obtaining an integral representation of the Green's function for the corresponding boundary condition.  Using functional calculus, we prove that the heat kernel is obtained by taking the inverse Laplace transform of the Green's function.  By the uniqueness of the heat kernel, we thereby obtain the equivalence of this integral representation of the heat kernel and the more common series representation of the heat kernel \cite{cheeger}.  In \S 3, we construct the heat spaces and demonstrate the composition rule for operators with polyhomogeneous conormal Schwartz kernels.  To construct the heat kernel, we proceed in \S 4 to solve the model problem for the smooth parts of the boundary for the Dirichlet, Neumann, and Robin boundary conditions.  In \S 5 we solve the model problem for the vertices with the various boundary conditions and combinations thereof.  In this way, we construct the heat kernel on a curvilinear polygonal domain in a surface.  In \S 6, we use this construction together with our integral representation of the heat kernels obtained in \S 2 to compute the heat trace and prove Theorem \ref{thm:mainthm}.  We conclude in \S 6 with applications of Theorem \ref{thm:mainthm} showing contexts in which corners (vertices) are spectral invariants.  
	
	\subsection*{Acknowledgments} The authors are deeply grateful to Daniel Grieser for his insightful comments on an early draft of this manuscript, and would also like to thank F\'elix Houde, Rafe Mazzeo, Richard Melrose, and Iosif Polterovich for helpful conversations. Thanks also to the anonymous referee for a careful reading of the paper, which led to significant improvements. The first author was supported by the Ministry of Education and Science of the Republic of Kazakhstan under grant AP08856479. The first author also was supported by the Government of Kazakhstan and the World Bank under grant APP-PHD-A-18/013P financed by the project "Fostering productive innovation." The second author is supported by the Swedish Research Council Grant, 2018-03873 (GAAME).  The third author was partially supported by a grant from the College of Science and Health at DePaul University.     
	%
	\section{Analytic preliminaries} \label{greens} 
	The Laplace operator on a curvilinear polygonal domain, even with specified boundary conditions on each side, is emphatically \em not \em guaranteed to be self-adjoint.  The angles at the vertices as well as the possibility of different boundary conditions on either side of a vertex can give rise to interesting phenomena \cite{grisvard}, \cite{dauge1}, \cite{dauge2}.  We shall consider a Friedrichs type extension of the Laplace operator here.  The Laplacian is a priori a symmetric operator on smooth, compactly supported functions on our domain, $\Omega$.  
	
	Our sign convention for the Robin boundary condition is 
	$$\frac{\pa v}{\pa n} = \kappa v, \quad \textrm{ for the inward pointing normal derivative.}$$ 
	The Robin parameter is smooth on each boundary component and is non-negative.  
	We define the Laplace operator corresponding to the mixed boundary conditions in the following way, as in \cite{frank} and \cite{raimondi}.  Consider the form
	\begin{equation*}
		a(u,v)=\int_\Omega \nabla u(z) \overline{\nabla v(z)}dz+\int_{\partial \Omega_R} \kappa(z) u(z)\overline{v(z)}d\sigma(z) 
	\end{equation*}
	with domain
	\begin{equation*}
		\mathrm{D}(a)=\{u \in H^1 (\Omega) : u|_{\pa\Omega_D} = 0\}.
	\end{equation*}
	Above, $\pa\Omega_D$ and $\pa \Omega_R$ are the unions of the boundary components on which we impose the Dirichlet and Robin boundary conditions, respectively, and $\kappa(z)$ is the Robin parameter. Then $a$ is a closed, densely defined, symmetric form. Therefore, by \cite[Theorem 2.23 Ch. 6]{Kato}, it generates a self-adjoint operator, which we call the Laplace operator corresponding to the boundary conditions we mentioned above.
	
	\subsection{Green's functions}\label{Green's functions}
	The general approach to study the heat kernel via the associated Green's function and Kantorovich-Lebedev transform is well documented in the literature, dating at least back to Fedosov in the 1960s \cite{fedosov}.  This approach has continued to produce interesting results in modern work as well; see for example the doctoral thesis of U\c{c}ar \cite{ucar} who considers polygonal domains in hyperbolic surfaces.  Although the general technique is well known, the details of the calculations are often omitted.  To maintain the flow and focus of this work, we present here the results of our calculations, and for the sake of completeness include the details in Appendix \ref{app1}. 
	
	We obtain integral expressions for Green's functions for the Laplacian on an infinite circular sector with Dirichlet, Neumann, and mixed boundary conditions.  Let $\gamma$ be the interior angle of the sector; we need only assume $\gamma \in (0, 2\pi)$. 
	The Green's function solves the following equation:
	\begin{equation}\label{Green_fnc}
		\begin{cases}
			sG-\frac{\partial^2 G}{\partial r^2}-\frac{1}{r}\frac{\partial G}{\partial r}-\frac{1}{r^2}\frac{\partial^2 G}{\partial \phi^2}=\frac{1}{r}\delta(r-r_0)\delta(\phi-\phi_0),\\
			\left.\left(\alpha G+\beta\frac{\partial G}{\partial\phi}\right)\right|_{\phi=0,\gamma}=0,
		\end{cases}
	\end{equation}
	with $\alpha = 1$ and $\beta = 0$ for the Dirichlet boundary condition or $\alpha = 0$ and $\beta = 1$ for the Neumann boundary condition, and in all cases with spectral parameter $s>0$.

	For the Dirichlet boundary condition we compute in Appendix \ref{app1} that the Green's function is 
	\begin{equation} \label{DirichletGK}
		G_D(s,r,\phi,r_0,\phi_0)=\frac{1}{\pi^2}\int_{0}^{\infty}K_{i\mu}(r \sqrt s)K_{i\mu}(r_0 \sqrt s)
	\end{equation}
	
	\begin{equation*}
		\times\biggl\{ \cosh(\pi-|\phi_0-\phi|)\mu-\frac{\sinh\pi\mu}{\sinh\gamma\mu}\cosh(\phi+\phi_0-\gamma)\mu+\frac{\sinh(\pi-\gamma)\mu}{\sinh\gamma\mu}\cosh(\phi-\phi_0) \mu \biggl\}d\mu.
	\end{equation*}
	For the Neumann boundary condition, we obtain
	
	\begin{equation} \label{NeumannGK} 
		G_N(s,r,\phi,r_0,\phi_0)=\frac{1}{\pi^2}\int_{0}^{\infty}K_{i\mu}(r \sqrt s)K_{i\mu}(r_0 \sqrt s)
	\end{equation}
	
	\begin{equation*}
		\times\biggl\{ \cosh(\pi-|\phi_0-\phi|)\mu+\frac{\sinh\pi\mu}{\sinh\gamma\mu}\cosh(\phi+\phi_0-\gamma)\mu+\frac{\sinh(\pi-\gamma)\mu}{\sinh\gamma\mu}\cosh(\phi-\phi_0) \mu   \biggl\}d\mu.
	\end{equation*}
	
	For the mixed Dirichlet-Neumann boundary condition, taking the Dirichlet condition at $\phi=0$ and the Neumann condition at $\phi=\gamma$ we obtain the Green's function 
	\begin{equation}  \label{greens-dn} 
		G_{DN} (s,r,\phi,r_0,\phi_0) =\frac{1}{\pi^2}\int_{0}^{\infty}K_{i\mu}(r\sqrt{s})K_{i\mu}(r_0\sqrt{s}) 
	\end{equation} 
	\begin{equation*}
		\times\biggl\{ \cosh(\pi-|\phi_0-\phi|)\mu\\
		+\frac{\sinh(\pi\mu)}{\cosh\gamma\mu}\sinh((\phi+\phi_0-\gamma)\mu)-\frac{\cosh(\pi-\gamma)\mu}{\cosh\gamma\mu}\cosh((\phi-\phi_0)\mu \biggl\}d\mu.
	\end{equation*}

	\subsection{The Heat kernel and the Green's function} \label{s:hk_gf}
	
	Let $\Delta$ be a self-adjoint, non-negative Laplace operator whose domain is contained in $\cL^2(\Omega)$ associated with certain boundary conditions
	\begin{equation*}
		B(u)=0
		\quad\text{on}
		\quad\partial\Omega,
	\end{equation*}
	where $\Omega$ is a domain with a piecewise smooth boundary $\partial\Omega$ which is contained in a larger smooth ambient manifold. Assume that $G(x,y,s)$ is the Green's function of the operator $s+\Delta$, that is the solution of the system
	\begin{equation}
		\begin{cases}
			(s+\Delta)G(x,y,s)=\delta(x-y),\\
			B(G)=0.
		\end{cases}
	\end{equation}
	Before stating the next result, we recall the definition of the Laplace transform and its inverse.  
	\begin{definition} Let $f$ be a continuous function such that there exists a constant $c>0$ with
		$$\int_0 ^\infty |f(t)| e^{-c |t|} dt < \infty.$$
		The Laplace transform of $f$ is defined to be
		$$g(s):=\mathcal{L}(f(t))(s) :=\int_0^\infty f(t)e^{-st} dt, \quad \textrm{Re}(s) \geq c.$$
		The inverse Laplace transform is then
		$$f(x):=\mathcal{L}^{-1}(g(s))(t) :=\frac{1}{2\pi i} \lim_{k\to \infty}\int_{a -ik}^{a +ik} g(s) e^{st} ds,$$
		for $t>0$ and $a>c$.
	\end{definition}
	
	\begin{proposition}\label{KerisinvLapofG}
		With the notations above, let $H(x,y,t)$ be the heat kernel corresponding to $\Delta$. Then
		\begin{equation*}
			\mathcal{L}[H](x,y,s)=G(x,y,s),
		\end{equation*}
		where $\mathcal{L}$ is the Laplace transform.
	\end{proposition}
	
	\begin{proof}
		Let $\{e^{-t\Delta}\}_{t\geq 0}$ be the semigroup generated by $-\Delta$. We note that $-\Delta$ is a non-positive, self-adjoint operator, so that this semigroup is well defined on $\cL^{2}(\Omega)$. Moreover, the self-adjointness gives
		\begin{equation}
			\left\|(\lambda + \Delta)^{-1}\right\|\leq \frac{1}{\mathrm{dist}(\lambda, \sigma(-\Delta))}
			\qquad \lambda \in \rho(-\Delta),
		\end{equation}
		where $\sigma(-\Delta)$ and $\rho(-\Delta)$ are, respectively, the spectrum and resolvent set of $-\Delta$. Therefore, by the Hille-Yosida theorem \cite{Jost}, $\{e^{-t\Delta}\}_{t\geq 0}$ is a contracting semigroup. Hence, by Theorem 8.2.2 in \cite{Jost}, it follows
		\begin{equation}\label{semi_res}
			\mathcal{L}\circ e^{-t\Delta}(s)=(s+\Delta)^{-1},
			\qquad s>0,
		\end{equation}
		where $\mathcal{L}$ is the Laplace transform acting in $t$ variable.
		
		On the one hand, since the heat kernel $H(x,y,t)$ is $\mathcal{L}$ transformable, we may express 
		\begin{equation*}
			\mathcal{L}[e^{-t\Delta}\phi]=\mathcal{L}\int_{\Omega}H(t,x,y)\phi(y)dy=\int_{\Omega}\mathcal{L}[H]\phi(y)dy
		\end{equation*}
		for $t$, $s>0$ and
		\begin{equation*}
			(s+\Delta)^{-1}\phi=\int_{\Omega}G(x,y,s)\phi(s)dy.
		\end{equation*}
		for $s>0$. Therefore, the uniqueness of Schwartz kernels and \eqref{semi_res} imply the statement.
	\end{proof}
	
	\begin{remark}
		Due to Proposition \ref{KerisinvLapofG}, by applying the inverse Laplace transform to \eqref{DirichletGK}, \eqref{NeumannGK}, and \eqref{greens-dn}, we obtain expressions for the heat kernels for the Laplacian on an infinite circular sector with Dirichlet, Neumann, and mixed boundary conditions, respectively.  The heat kernels for an infinite circular sector were computed by Cheeger using separation of variables in polar coordinates \cite[p. 592 (3.42)]{cheeger}. Cheeger's formula simplifies in our setting to:
		\begin{equation}\label{eq:cheegerhk} 
			H(t,r,\theta,r',\theta')=\frac{1}{2t}\exp\left[-\frac{r^2+(r')^2}{4t}\right]\sum_{j=1}^{\infty}I_{\mu_j}\left(\frac{rr'}{2t}\right)\phi_j(\theta)\phi_j(\theta').
		\end{equation}
		Here $I_{\mu_j}$ are the modified Bessel functions, and $(\phi_j,\mu_j)$ are the eigenfunctions, and corresponding eigenvalues, of the appropriate eigenvalue problem (D-D, N-N, or D-N) on the interval $[0,\gamma]$.  By the uniqueness of the heat kernel we therefore obtain the equality of these expressions with the inverse Laplace transform of the expression for the Greens functions.  
	\end{remark}

	\section{Heat spaces} \label{s:heatspaces} 
	We consider curvilinear polygonal domains as in Definition \ref{def:curvpoly}; see examples illustrated in Figure \ref{fig:curvpoly}.
	\begin{figure} \includegraphics[width=200pt]{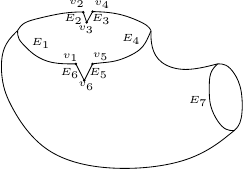}  \includegraphics[width=200pt]{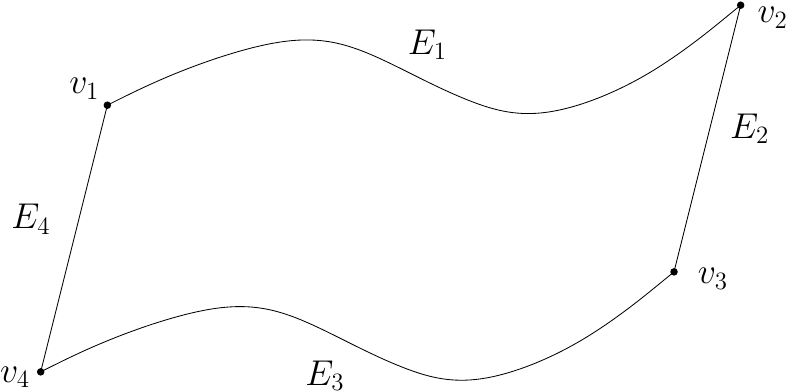} 
		\caption{Two examples of curvilinear polygonal domains, with edges and vertices.} 
		\label{fig:curvpoly} 
	\end{figure}

	\subsection{Manifolds with corners and polyhomogeneity} 
	Near a vertex, $\Omega$ has the differentiable structure of a manifold with corners after blowing up the vertex. Specifically, an open neighborhood of a vertex is diffeomorphic to a sector, $(0, \eps] \times C$, where $C$ is a circular arc.  If we include the point $\{0\} \times C$, then we obtain a smooth manifold with corners. This is what is meant by ``blowing up the vertex,'' in the sense that we replace the vertex with a copy of the link of the sector, namely $C$. This process may be thought of as pretending that polar coordinates are actually valid down to the origin. Doing this construction at each vertex yields a smooth surface with corners which we call $\Omega_0$. In this way, we may identify the differentiable structure of all the surfaces we consider here as the differentiable structure of \em manifolds with corners, \em defined below.  The definition is first due to Melrose \cite{calculus}, here we use the version which is introduced in \cite{Melrosenotes}.

	In order to define a manifold with corners, we must first define t-manifolds.   
	
	\begin{definition}\cite[Def. 1.6.1]{Melrosenotes}  An $n$-dimensional t-manifold $X$ is a  paracompact Hausdorff space such that at each point $x \in X$ there is a non-negative integer $k$ such that a neighborhood of $x$ is homeomorphic to a neighborhood of the origin in the product $[0,\infty)^k \times \R^{n-k}$, with all transition maps being smooth with respect to the subspace topology on $[0,\infty)^k\times \R^{n-k}\subseteq\R^n$.
	\end{definition}

	Now we define a manifold with corners.
	
	\begin{definition}\cite[Def. 1.8.5]{Melrosenotes}\label{defmwc}
		A manifold with corners is a t-manifold such that each boundary hypersurface is embedded.
	\end{definition}

	Since with this definition we may have $k=0$, we see that smooth manifolds without boundary also fit into the general class of ``manifolds with corners.'' 
	
	The purpose of the heat space construction is to create spaces on which the heat kernel and its trace are polyhomogeneous conormal distributions, abbreviated, pc.  This is a natural class of functions within which to study partial differential equations on manifolds with corners; see \cite{etdeo1} and references therein. We briefly recap the definition here. To begin, we say that a subset $F\subseteq\mathbb C\times\mathbb N_0$ is an \em index set \em if $F$ is a discrete set satisfying the following properties:
	\begin{itemize}
		\item For all $N$, $F\cap\{\Re(z)<N\}$ is finite;
		\item If $(s,p)\in F$, then $(s+1,p)\in F$;
		\item If $(s,p)\in F$ and $p>0$, then $(s,p-1)\in F$.
	\end{itemize}
	The latter two conditions are sometimes omitted from this definition, but they give pc functions nice invariance properties; see 
	\cite{grieser}. 
	
	Let $X$ be an $n$-dimensional manifold with corners, and let $\{M_i\}_{i=1}^J$ be the set of its boundary hypersurfaces, that is, the set of all boundary faces of codimension one. We say that $\mathcal F=(F_1,\ldots,F_J)$ is an \em index family \em for $X$ if each $F_i=\{(s_{ij},p_{ij})_{j=1}^{\infty}\}$ is an index set, ordered so that $s_{ij}\in\mathbb R$ are non-decreasing and $p_{ij}$ are non-increasing whenever $s_{ij}$ is unchanged. For each $i$, let $x^i$ be a \em boundary defining function \em for $M_i$; that is, a smooth, non-negative function $x^i : X \to \R$ such that $x^i$ vanishes precisely at $M_i$ but the differential $dx^i$ is non-zero on $M_i$. We may use boundary defining functions as coordinates on $X$. Finally, let $\cV_b$ denote the space of smooth vector fields on $X$ which are tangent to all boundaries. With this terminology, we define $\cA^{\mathcal F}(X)$, the space of polyhomogeneous conormal, or pc functions, to be the space of functions $f$ smooth on the interior of $X$ which have:
	\begin{itemize}
		\item generalized Taylor-like expansions at each boundary hypersurface $M_i$ of the form
		\[f\sim\sum_{j=1}^{\infty}(x^i)^{s_{ij}}(\log(x^i))^{p_{ij}}a_{ij}(x^1,\ldots,x^{i-1},x^{i+1},\ldots,x^n), \]
		where for each $i$, the set $\{(s_{ij},p_{ij})\}$ is the index set $F_i$, enumerated so that $s_{ij}$ is non-decreasing;
		\item product type expansions of the same form at each corner (polyhomogeneous), 
		\item and for which $Vf$ has expansions of the same type whenever $V$ is a product of elements of $\cV_b$ (conormal).
	\end{itemize} The union of these spaces over all possible index sets is denoted $\cA^*(X)$.
	Note that by definition these spaces are invariant under $\cV_b$, in the sense that for any $V\in\cV_b$ and any $u\in\cA^{\mathcal F}(X)$, $Vu\in\cA^{\mathcal F}(X)$ as well.  Observe also that smooth functions on $X$ are pc with each index set consisting of $\mathbb N_0\times\{0\}$.
	
	\subsubsection{Blowups}
	Consider the finite cone, $(0,1]_r \times \Sone_\theta$ with the Riemannian metric, $dr^2 + r^2 d\theta^2$, where $d\theta^2$ is the standard metric on $\Sone$, and the conical point is at $r=0$.  The simplest example of blowing up is replacing the conical point at $r=0$ with a copy of $\Sone$, so that the finite cone is now topologically identified with the cylinder $[0,1] \times \Sone$.  In this example, the point at $r=0$ is replaced with the set of all directions, that is all values of $\theta$, with which one can approach the point, $r=0$.  This type of blowup is known as a \emph{radial blowup}, or a \emph{normal} blowup.  
	
	More generally, we shall consider blowups along \em $p$-submanifolds. \em   An embedded submanifold $Y$ contained in a manifold with corners, $X$, is a \em $p$-submanifold  \em if near each point $q \in Y$, there exist local coordinates so that $Y$ is defined by the vanishing of a subset of these local coordinates.  For example, the boundary faces of $X$ are $p$-submanifolds.  The intersection of two or more boundary faces of $X$ is also a $p$-submanifold.  The normal blowup of $X$ around $Y$ is denoted by 
	$$[X; Y] = \ff \sqcup (X \setminus Y).$$
	Above, ff is the inward pointing spherical normal bundle of $Y$ which has replaced $Y$ in $[X;Y]$.  There is a unique minimal differentiable structure with respect to which $[X;Y]$ is a manifold with corners such that the following two conditions hold.     
	\begin{enumerate}
		\item There is a smooth ``blow-down'' map
		$$\beta : [X;Y] \to X$$
		which is the identity on $(X\setminus Y)$.
		\item Cylindrical coordinates around $Y$ are smooth coordinates on $[X;Y]$.
	\end{enumerate} 
	In case we wish to blow up two or more $p$-submanifolds, we write 
	$$[X; Y_1; Y_2]$$
	to indicate that we first blow up $Y_1$ and next blow up the lift of $Y_2$ to $[X;Y_1]$. This lift is the usual lift if $Y_2\subseteq Y_1$, and otherwise is the closure of $Y_2\setminus Y_1$ in $[X;Y_1]$.  
	
	
	
	\subsection{The single heat space}
	The first of the heat spaces we construct is the single heat space. Let $\Omega$ be a curvilinear polygonal domain. Let $E$ be the set of edges of $\Omega$ (maximal smooth boundary components) and $V$ the set of vertices. Throughout, we let $\Omega_0$ be $\Omega$ with the vertices blown up, so that $\Omega_0$ is a surface with corners. We also let $\tilde V$ be the lift of $V$ to $\Omega_0$, that is, the union of the faces $\{r=0\}$ at each vertex.
	
	Throughout, we will use the time coordinate $T=\sqrt{t}$. This changes the smooth structure at $t=0$ somewhat, but allows us to avoid the use of parabolic blow-ups.
	
	The heat kernel restricted to the diagonal is defined on $\Omega \times [0, \infty)_T$ and is dubbed \em the diagonal heat kernel.  \em  The single heat space is a natural habitat of the diagonal heat kernel in the sense that the diagonal heat kernel lifted to the single heat space is pc.  
	Note that the single heat space is the same for all the possible boundary conditions we consider.
	To create the single heat space, we begin with the manifold with corners $\Omega_0\times[0,1)_T$. We denote its $T=0$ boundary hypersurface\footnote{A boundary hypersurface will often be referred to as a boundary face or face. A ``side face'' is a boundary hypersurface arising from the boundary in $\Omega$.  This is in contrast to the $tf$ face as well as to the boundary hypersurfaces created by blowing up along $p$-submanifolds.} by tf. The remainder of the boundary hypersurfaces correspond either to an edge or to a vertex (which has been blown up, so there is a boundary hypersurface for each vertex). Denote the edge/side faces for positive $T$ by $\{e_j\}_{j=1}^{|E|}$, and the vertex faces $\tilde V_j\times[0,1)_T$ by
	\begin{equation} \label{firstbdhs} \{ s v_j\}_{j=1} ^{|V|}. \end{equation}
	
	Next we perform blow-ups, first of the vertices at $T=0$ and then of the edges at $T=0$, to create the \emph{single space}
	\footnote{In the notation $M_h$ for the single heat space, $M$ acts as a place-holder for the various model geometries we shall use to construct the heat kernel on our surface $\Omega$ (the analogous notation is used for the double and triple heat spaces).}
	$$M_h = [ \Omega_0 \times [0, 1)_T ;  \{s v_j\}_{j=1}^{|V|} \cap \{T=0\};  {E \times \{T=0\}}].$$

	We call the new faces obtained $\{pv_j\}_{j=1}^{|V|}$ and $\{pe_j\}_{j=1}^{|E|}$.
	\begin{figure}\includegraphics[width=200pt]{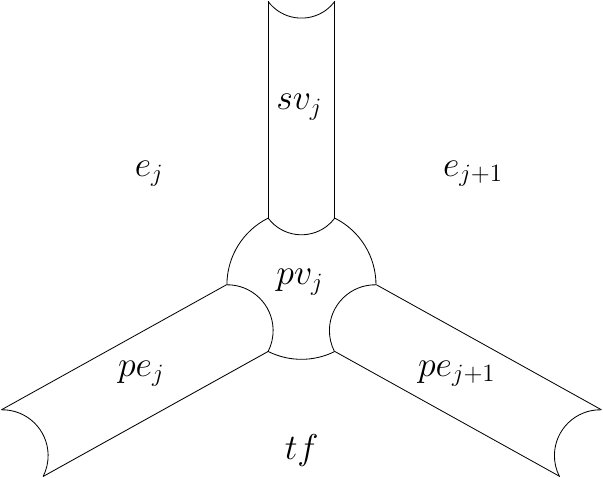} 
		\caption{Single heat space.}  
	\end{figure} 
	The sequence of blowups is therefore:  
	\begin{enumerate}
		\item the normal blowup about each vertex for all time (implicit in the starting point of $\Omega_0\times[0,1)_T$);
		\item the blowup of each vertex at $T=0$;
		\item the blowup of each edge at $T=0$.
	\end{enumerate} 
	The space $M_h$ has $2|V|+2|E|+1$ boundary hypersurfaces in total.
	
	To motivate this construction, consider the diagonal heat kernel on an infinite sector, which from \eqref{eq:cheegerhk} is
	$$
	H(t,r,\theta,r,\theta)=\frac{1}{2T^2}\exp \left[-\frac{r^2}{2T^2}\right]\sum_{j=1}^{\infty}I_{\nu_j}\left(\frac{r^2}{2T^2}\right)|\phi_j(\theta)|^2.
	$$
	The pre-factor is not a pc function of $(r,T)$, but it is a pc function of $(r,T/r)$. And indeed, after creating pv$_j$, $T/r$ may be taken as a boundary defining function for tf near pv$_j$.
	
	In relation to the literature, our heat space construction can be seen as a hybrid combining elements of Mooers's heat space for manifolds with isolated conical singularities \cite{mooers} and Mazzeo \& Vertman's heat space for manifolds with edges \cite{MaVe}.  The first step in Mooers's construction is to blow up the conical singularity by replacing the point with the cross-section (link) of the cone.  Next she takes the product with time.  It is completely equivalent to first take the product with time, and then perform a normal blowup of the cone point for all time.  This is the procedure we follow at the vertices (and cone points).  Next, we perform  blowups at $T=0$ of the edges (and smooth boundary), analogous to \cite{MaVe}. 
	
	In fact our construction can be viewed as an iterated version of \cite{MaVe}, where we perform each of their blow-ups at the vertices (of codimension 2) and then at the edges (of codimension 1). We expect that a similar construction, with additional iteration, could work for polyhedral domains in manifolds of arbitrary dimension.

	\subsection{The double heat space} The double heat space is a natural habitat of the heat kernel in the sense that the heat kernel, initially defined on $\Omega\times\Omega\times[0,\infty)$, lifts to be pc on it. As with the single space, our models guide the construction of the double heat space by indicating which p-submanifolds should be blown up to ensure that the heat kernel will be pc. The general philosophy is to mimic \cite{MaVe}, performing each of their blow-ups first at the vertices and then at the edges.
	
	Begin with $M^2:=\Omega_0\times\Omega_0\times[0,1)_T$. As we are using $\Omega_0$ rather than $\Omega$, this is now a manifold with corners, the analogue of the space with which Mazzeo \& Vertman begin \cite[\S 3.1]{MaVe}. Denote its $T=0$ boundary hypersurface by tf. All other boundary hypersurfaces are of one of the following forms:
	\begin{itemize}
		\item $E_j\times\Omega_0\times[0,1)$, which we call $E_{j0}$, 
		\item $\Omega_0\times E_j\times [0,1)$, which we call $E_{0j}$,
		\item $\tilde V_j\times\Omega_0\times [0,1)$, which is now a boundary hypersurface which we call hvrf$_j$;  
		\item $\Omega_0\times \tilde V_j\times [0,1)$, which we call hvlf$_j$. 
	\end{itemize}
	As a guide to the nomenclature here,``h'' indicates ``height'' because these blowups persist for all time, and time is usually the vertical axis in figures of this type.  As usual, ``v'' indicates vertices, and ``ff'', ``rf", and ``lf" indicate left, right, and front faces respectively\footnote{Some authors reverse the roles of ``right" and ``left" here -- our terminology is chosen to match \cite{MaVe}.}. Now, for each $j$ and $k$, blow up the intersection hvlf$_{j}\cap$hvrf$_k$ to create a new boundary hypersurface\footnote{This is different from \cite{MaVe}. It may be that only the blow-ups with $j=k$ are necessary, but doing all of them makes the proof of the composition theorem easier to read.} hvff$_{jk}$. At this point we call the new space $M_0^2$:
	\[M_0^2=[\Omega_0\times\Omega_0\times[0,1);\ \cup_{j,k}\textrm{hvlf}_j\cap\textrm{hvrf}_k].\]

	The next step is to blow up the union over all $j$ of hvff$_{jj}\cap\{T=0\}$.  This blowup creates $N$ boundary faces, one at each vertex, denoted by ff$_j$.  We shall collectively refer to these as ff for ``front face(s).''  The resulting space is 
	$$[M_0 ^2 ; \cup_j \hvff_{jj} \cap \{ T=0\}].$$
	This construction at the vertices needs to be imitated at the edges, so now lift the triple intersection of the diagonal, boundary, and $T=0$, namely $\{ (z, z', 0): z=z' \in E\}$, to $[M_0 ^2; \hvff_{jj} \cap \{T=0\}]$.  As the lift of a p-submanifold is a p-submanifold, this lift is a boundary p-submanifold which meets ff.  We blow it up, creating a new boundary hypersurface at each side face.  We denote their union by sf, for ``side face(s)". When we need to distinguish components, we shall refer to the $j^{th}$ component as sf$_{j}$. 
	Observe that the side faces are pairwise disjoint, as their intersection in $\Omega_0\times\Omega_0\times[0,1)$ has already been blown up to create ff. Let us call the space at this point the ``reduced double heat space" $M_{rh}^2$.  It is the analogue in our setting of the ``intermediate heat space'' in \cite[\S 3.1]{MaVe}. Specifically, we have
	$$M_{rh}^2=[M_0 ^2 ; \cup_j\hvff_{jj} \cap \{ T=0\}; \{ (z, z', 0): z=z' \in E\}].$$
	
	The final blowup is at the lift of the diagonal at $T=0$. This blowup produces the double heat space:
	$$M_h^2=[M_{rh} ^2; \{ (z, z', 0): z=z' \in \Omega\}].$$
	Call the new front face td, for ``time diagonal.''  It intersects both sf and ff, as well as tf, though no other boundary hypersurfaces (in particular none of the hvff, hvlf, or hvrf components, as the intersection of the diagonal with the boundary has already been blown up).  The double heat space is depicted in Figure \ref{fig:dubheatsp}. 
	
	\begin{figure}
		\centering
		\includegraphics[width=0.5\textwidth]{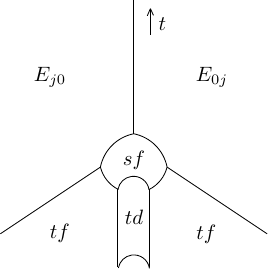}
		\caption{This is a schematic depiction of the double heat space.}
		\label{fig:dubheatsp}
	\end{figure}

	\subsection{The triple heat space} 
	The triple heat space, unlike the single and double heat spaces, is \em not \em a natural habitat.  Instead, it is an artificial environment to which we shall lift the Schwartz kernels of operators from their natural habitats on the double space in order to compose them. With the correct construction of the triple space, the process of composition returns an element which is pc on the double space.  Consequently, the construction of the triple space is guided by the desire to be able to lift and compose Schwartz kernels which live on the double heat space.
	
	Our construction is based on that of Mazzeo and Vertman \cite{MaVe}, and indeed is identical when $V=\emptyset$, that is when there are no vertices. Our guiding principle is that whenever Mazzeo and Vertman blow up a boundary, we first blow up $V$ and then $E$. However, our setting is further complicated by the additional blow-ups at the vertices for positive time.  To begin, we consider the original triple space, with the vertices blown up in each factor so that we have the structure of a manifold with corners:
	\begin{equation}
		\label{eq:triplespacenoblowups}
		M^3:=\Omega_0\times\Omega_0\times\Omega_0\times[0,1) \times[0,1)
	\end{equation}
	along with the three projections $\pi_C$, $\pi_L$, and $\pi_R$ defined by
	\begin{equation} \label{eq:piC} \pi_C:M^3\to \Omega_0^2\times\mathbb R_{\sqrt{(T')^2+(T'')^2}}, (z,z',z'',T',T'')\to(z,z'',\sqrt{(T')^2+(T'')^2}); \end{equation} 
	\begin{equation} \label{eq:piL} \pi_L:M^3\to \Omega_0^2\times\mathbb R_{T'}, (z,z',z'',T',T'')\to(z,z',T'); \end{equation} 
	\begin{equation} \label{eq:piR} \pi_R:M^3 \to \Omega_0^2\times\mathbb R_{T''}, (z,z',z'',T',T'')\to(z',z'',T''). \end{equation} 
	These projections will be used to re-interpret operator composition in terms of pullbacks and push-forwards. Modulo all the technical details, if we have two operators, one $A$ with Schwartz kernel $K_A$ and the other $B$ with Schwartz kernel $K_B$, the Schwartz kernel $K_C$ of the composition $C=A\circ B$ is given by
	\[K_C=(\pi_C)_*(\pi_L^*K_A\pi_R^*K_B).\]
	
	Fortunately, we do not need the full composition formula, just the version in which one of the operators vanishes to infinite order at td.  This is because we use the composition formula to run a Neumann series argument to construct our heat kernel, and with a good enough initial parametrix for the heat kernel, the error will vanish to infinite order at td. 
	For this reason, we will construct a reduced triple heat space $M^3_{rh}$ in order to prove a special case of the composition formula, the case in which $K_B$ has order $\infty$ at the face td.  In this case, $K_B$ is pc on $M^2_{rh}$.\footnote{In general, the operator kernel would be pc on $M^2 _h$ but not on $M^2 _{rh}$. }
	
	The triple space construction is guided by the following conditions, which are necessary and sufficient to obtain the composition formula we require:
	\begin{enumerate} 
		\item  we need the projection $\pi_C$ to lift to a b-fibration $\Pi_C:M^3_{rh}\to M^2_{rh}$; 
		\item we need the projection $\pi_L$ to lift to a b-map  $\Pi_L:M^3_{rh}\to M^2_h$; 
		\item since $K_B$ is pc  on $M^2_{rh}$, we need only that the projection $\pi_R$ lift to a b-map $\Pi_R:M^3_{rh}\to M^2_{rh}$.
	\end{enumerate}
	We therefore recall the definitions of these important ``b-notions.''
	
	\begin{definition}[\cite{calculus} p. 51] Let $f: X \to Y$ be a smooth map between manifolds with corners, $X$ with boundary hypersurfaces $M_1(X)$, and $Y$ with boundary hypersurfaces $M_1(Y)$.  Then $f$ is a \em b-map \em if for each $H \in M_1(Y)$, and boundary defining function $\rho_H$ 
		$$f^*(\rho_H) = 0 \quad \textrm{or} \quad f^*(\rho_H) = a \cdot \prod_{G \in M_1 (X)} \rho_G ^{e_f (G, H)}, \quad 0< a \in \cC^\infty (X).$$
		In the latter case, the numbers $e_f (G,H)$ are called the boundary exponents of $f$.  In this case, writing $M_1 (X) = \{ G_j\}_{j=1} ^n$ and $M_1 (Y) = \{H_k \}_{k=1} ^m$, the \em exponent matrix \em is the matrix whose entries are $\{e_f (G_j, H_k)\}_{j,k=1} ^{n,m} $.
		
	\end{definition} 
	
	\begin{definition}[\cite{calculus} p. 53] A b-map is a \em b-submersion \em if the b-differential is surjective for all $x \in X$.  For the definition of the b-differential, we refer to \cite[p.53--54]{calculus}.  
	\end{definition}
	
	\begin{definition}[\cite{calculus} p. 53] A b-map is \em b-normal \em if $\, ^b f_*$, defined as in \cite[(7) p. 53]{calculus} is surjective.
	\end{definition} 
	
	\begin{definition}[\cite{hmm} p. 124]  A b-map is a \em b-fibration \em  if $f_*$, acting on the b-tangent bundle, is surjective on each fibre, and the image of each boundary hypersurface in $X$ is either $Y$ or one boundary hypersurface $H \subset Y$.   We note that this holds if and only if the b-map is both b-normal and a b-submersion \cite[p. 53]{calculus}.  
	\end{definition} 
	
	\begin{definition}[\cite{hmm} p. 124] A total boundary defining function for $X$, which we denote $\rho_X$, is the product of boundary defining functions for all the boundary hypersurfaces.  We say that a b-fibration is \em simple \em  if 
		$$f^* \rho_Y = a \cdot \rho_X, \quad 0<a \in \cC^\infty (X).$$
		In terms of the exponent matrix, this is equivalent to requiring that the elements are either 0 or 1, and moreover for each $G \in M_1 (X)$ there exists precisely one $H \in M_1 (Y)$ with $e_f (G,H) = 1$.  
	\end{definition}

	\medskip
	
	To start constructing $M^3_{rh}$, we begin with \eqref{eq:triplespacenoblowups}. This space has a number of boundary hypersurfaces: the $T''=0$ face, which we denote $F_{TL}$, the $T'=0$ face, which we denote $F_{TR}$, and a number of faces of the form $E_j\times \Omega_0 \times  \Omega_0 \times [0,1)_{T''}\times [0,1)_{T'}$, which we call $F_{j00}$, with similar notation for products where $E_j$ is in the second or third factor. We also have a number of faces of the form $\tilde V_j\times \Omega_0 \times \Omega_0 \times [0,1)\times [0,1)$, which we call $F_{V_j00}$ and collectively $F_{V00}$, et cetera. The notation used in the triple space is a bit different from the single and double spaces, in particular the use of capitals.  This is in part to distinguish the triple space as an artefact to be used for the purpose of composition and in part to draw a parallel to related constructions in the literature \cite{ars1, vai} which use an analogous notation.
	
	A few blow-up facts will be useful throughout. 
	\begin{proposition}\label{prop:blowupfacts} The following are true (each is well-known in the geometric microlocal literature): 
		
		\begin{enumerate}
			\item Blow-ups which are nested, disjoint, or transverse commute \cite[Lemma 2.1]{hmm}.
			\item Blow-down maps are b-maps \cite[\S 2.3.3]{grieser}\cite[proof of Lemma 2.7]{hmm}. Moreover, if $Y\subset X$ is an intersection of boundary hypersurfaces of $X$, then the blow-down map from $[X;Y]\to X$ is a b-submersion. (This reduces to the case $X=\mathbb R^n_+$, $Y=0$, and then follows from a computation in local coordinates).
			\item The composition of b-maps is a b-map \cite[\S 2.3.3]{grieser}. Further, it follows immediately from the definition that the composition of b-submersions is a b-submersion.
			\item Once a b-map is known to be a b-submersion, checking b-normality is a matter of ensuring, by checking the exponent matrix, that no boundary hypersurface is mapped into a face of codimension $>1$ in the image \cite[definition 3.9]{grieser} \cite[remark B.4]{ars1}. 
		\end{enumerate}
	\end{proposition}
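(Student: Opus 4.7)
The plan is to verify each of the four standard facts about blow-ups and b-maps, each of which is already established in the cited literature; the role of this proposition is to package them for convenient use in the subsequent composition arguments, so the proof can be largely a matter of pointing to the correct references with enough explanation to make the statements self-contained.

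For (1), the strategy is to choose local coordinates adapted to the p-submanifolds. For disjoint p-submanifolds, the two blow-ups operate in disjoint coordinate patches and commute trivially. For transverse p-submanifolds, one may select coordinates so that $Y_1 = \{x_1 = \cdots = x_k = 0\}$ and $Y_2 = \{x_{k+1} = \cdots = x_{k+\ell} = 0\}$, and the two blow-ups act on disjoint coordinate subsets, so the order does not affect the outcome. For nested $Y_1 \subset Y_2$, the ordering is forced: one blows up the deeper submanifold first and then lifts the shallower one. The full verification is \cite[Lemma 2.1]{hmm}.

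For (2), the b-map property follows directly from the polar-coordinate description. If $Y = \{x_1 = \cdots = x_k = 0\}$ locally, then near the front face $[X;Y]$ has coordinates $(r,\omega,y)$ with $r$ a boundary defining function for the front face and $\omega$ angular. The pullback $\beta^* x_j = r\omega_j$ is a boundary defining function times a smooth non-vanishing factor, which is the definition of a b-map. The b-submersion property follows because $\beta$ is a local diffeomorphism off the front face, and at front-face points the b-vector fields $r\partial_r, \partial_{\omega_i}, \partial_{y_j}$ surject onto the b-tangent space of $X$ at the image. For (3), both statements reduce to the chain rule: composing b-maps multiplies exponent matrices, preserving the product-of-boundary-defining-functions structure, while the b-differential of a composition is the composition of b-differentials, preserving surjectivity.

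For (4), the observation is that, once a b-submersion is in hand, b-normality holds if and only if each boundary hypersurface of $X$ lands in a single boundary hypersurface of $Y$---equivalently, no boundary hypersurface of $X$ maps into a corner of $Y$ of codimension two or higher. The image of a boundary hypersurface $G \subset X$ is contained in the intersection of those $H \subset Y$ for which $e_f(G,H) > 0$, so this translates to the combinatorial requirement that in each row of the exponent matrix at most one entry is nonzero. The main obstacle in all four parts is not conceptual but bookkeeping: the exponent matrices and transverse-intersection checks will be invoked repeatedly in the construction of the lifts $\Pi_C$, $\Pi_L$, $\Pi_R$ on the triple heat space, so the cleanest approach is to record these criteria in the sharp form given here and to defer all careful exponent-matrix tracking to the composition theorem where it becomes unavoidable.
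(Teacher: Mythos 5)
The paper gives no proof of this proposition; it is a list of background facts with citations, and the text moves directly on after stating them. Your plan of pointing to the references and supplying short sketches is therefore the same approach, and your sketches are largely faithful to what the cited sources establish. Two imprecisions are worth flagging. First, in the nested case of (1) you write that ``the ordering is forced: one blows up the deeper submanifold first and then lifts the shallower one.'' This misstates the content: the point of the nested-blow-up lemma in \cite{hmm} is precisely that both orders of blow-up give canonically isomorphic spaces, so the order is \emph{not} forced; what is true is that one usually constructs the space in the order you describe. Second, in (2) you describe $\beta^* x_j = r\omega_j$ as ``a boundary defining function times a smooth non-vanishing factor.'' The projective coordinate $\omega_j$ generically does vanish, on the lift of the face $\{x_j=0\}$, so the pullback of a boundary defining function is in general a product of \emph{two} boundary defining functions, not a defining function times a positive smooth factor. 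The b-map condition (pullback equals a positive smooth function times a monomial in boundary defining functions) is of course still met, but the reason you give is not correct as stated and would mislead a reader trying to reconstruct the exponent matrix of the blow-down map.
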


	We will also use the following.  
	\begin{lemma}\label{lem:newnested} Suppose $A$, $B$, $C$, and $D$ are p-submanifolds of a manifold with corners $X$, and suppose that
		\[A\subseteq B\subseteq D,\ A\subseteq C\subseteq D,\mbox{ and }B\cap C\subseteq A.\]
		Then
		\[[X;C;D;A;B]\cong[X;A;B;C;D].\]
	\end{lemma}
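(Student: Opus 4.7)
The plan is to transform $[X;A;B;C;D]$ into $[X;C;D;A;B]$ by a sequence of commutations of adjacent blowups, each justified by Proposition \ref{prop:blowupfacts}(1), which states that nested, disjoint, or transverse blowups commute. Five of the six required commutations will be of nested pairs, coming directly from the hypotheses $A\subseteq B\subseteq D$ and $A\subseteq C\subseteq D$; the sixth relies on disjointness of lifts after an initial blow-up of $A$.

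The key preliminary observation is that the hypothesis $B\cap C\subseteq A$ implies the lifts $\widetilde B$ and $\widetilde C$ are disjoint in $[X;A]$. Away from the front face of $[X;A]$, their intersection is $(B\cap C)\setminus A=\emptyset$. On the front face, which is the inward-pointing spherical normal bundle of $A$, the lift of $B$ (respectively $C$) consists of the unit directions in $N_pA$ that are tangent to $B$ (respectively $C$) at each $p\in A$; if a direction were common to both, it would produce a sequence in $B\cap C$ escaping $A$, contradicting $B\cap C\subseteq A$ in view of the p-submanifold local normal form.

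Granting this, the explicit chain of equalities is:
\begin{align*}
[X;A;B;C;D] &= [X;A;C;B;D] && \text{(lifts of $B$, $C$ disjoint in $[X;A]$)} \\
&= [X;A;C;D;B] && \text{($B\subseteq D$, nested)} \\
&= [X;A;D;C;B] && \text{($C\subseteq D$, nested)} \\
&= [X;D;A;C;B] && \text{($A\subseteq D$, nested)} \\
&= [X;D;C;A;B] && \text{($A\subseteq C$, nested)} \\
&= [X;C;D;A;B] && \text{($C\subseteq D$, nested).}
\end{align*}
In each nested step, one must note that the relevant inclusion is preserved by the previous blowups: if $Y\subseteq Z$ are p-submanifolds and $W$ is any p-submanifold that has been blown up, the lift of $Y$ is contained in the lift of $Z$, so the inductive use of the commutation principle is legitimate.

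The main obstacle is the first step, establishing the disjointness of $\widetilde B$ and $\widetilde C$ in $[X;A]$ rigorously. This is handled by choosing local p-coordinates adapted to $A$ in which $B$ and $C$ are each defined by the vanishing of a (different) subset of the coordinates, and then computing the spherical lifts over $A$ explicitly; the disjointness is then visible from the coordinate description. The remaining five commutations are routine applications of Proposition \ref{prop:blowupfacts}(1), so the proof reduces essentially to that one local computation.
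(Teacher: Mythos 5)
Your proof is correct and uses the same core idea as the paper's: rearrange the blow-up sequence via nested commutations, relying on the observation that $B\cap C\subseteq A$ makes the lifts of $B$ and $C$ disjoint in $[X;A]$. The paper's chain is shorter (it moves $A$ forward past $D$ and $C$, then $B$ past $D$, then does the one disjoint swap, rather than your six-step route), and it simply asserts the disjointness where you sketch a local-coordinate argument, but the approach is essentially identical.
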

	\begin{proof} Nested blow-ups commute, so we may do $A$ before $C$ and $D$ and thus
		\[[X;C;D;A;B]\cong [X;A;C;D;B].\]
		Similarly, we may do $B$ before $D$ and thus
		\[[X;C;D;A;B]\cong [X;A;C;B;D].\]
		Now since $B\cap C\subseteq A$, the lifts of $B$ and $C$ are disjoint in the space $[X;A]$, and since disjoint blow-ups commute they may be done in either order. This completes the proof. \end{proof}
	
	Our strategy will be to repeatedly take advantage of the following lemma of Hassell, Mazzeo, and Melrose \cite{hmm}:
	\begin{lemma}\label{lem:hmmlemma}\cite[Lemma 2.5]{hmm} Suppose $f:X\to Y$ is a simple b-fibration of compact manifolds with corners. Suppose $U\subset Y$ is a closed p-submanifold. Then, with $S$ the minimal collection of p-submanifolds of $X$ into which the lift of $U$ under $f$ decomposes, $f$ extends from the complement of $f^{-1}(U)$ to a b-fibration $f_U:[X,S]\to [Y,U]$, for any order of blow-up of the elements of $S$.
	\end{lemma}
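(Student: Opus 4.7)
The plan is to reduce to a local coordinate calculation near $f^{-1}(T)$, exploiting the hypothesis that $f$ is a \emph{simple} b-fibration. Since $T$ is a closed p-submanifold of $Y$, near any point of $T$ there exist local product coordinates on $Y$ in which $T$ is cut out by the vanishing of a subset of the coordinates, some of which may be boundary defining functions. Because $f$ is simple, every boundary defining function of $Y$ pulls back (up to a smooth positive factor) to a single boundary defining function of $X$; combined with b-submersivity, this gives local fibered coordinates on $X$ in which the pullbacks of the cutting functions of $T$ are themselves either boundary defining functions of $X$ or smooth interior coordinates.

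Next I would analyze the structure of $f^{-1}(T)$. In general $f^{-1}(T)$ is not itself a p-submanifold, because its local equations involve boundary defining functions associated to several distinct boundary hypersurfaces of $X$. However, on each boundary stratum of $X$ the equations cut out a single p-submanifold, and the collection of these over all strata is the minimal family $S$ of the statement. I would then verify that the elements of $S$ are pairwise nested, disjoint, or cleanly transverse, since they arise from the same coordinate conditions distributed over different strata. By the standard commutativity of blow-ups in these cases (Proposition \ref{prop:blowupfacts}(1), and Lemma \ref{lem:newnested} when needed), $[X;S]$ is well defined independently of the order in which the elements of $S$ are blown up.

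Third, I would verify that the lifted map $f_T:[X;S]\to[Y;T]$ is a b-fibration. Smooth extension follows from the fact that projective coordinates near the new face of $[Y;T]$ (ratios of the defining functions of $T$) lift to ratios of the pullbacks, which are smooth precisely after blowing up the loci in $S$ where the corresponding denominators vanish. The b-map property reduces to checking that the new boundary defining function for the blow-up face of $[Y;T]$ pulls back to a smooth positive multiple of the product of the new boundary defining functions for the blow-up faces of the elements of $S$; this is a direct local computation using simplicity of $f$. The b-submersion property is inherited from $f$ via the fact (Proposition \ref{prop:blowupfacts}(2,3)) that blow-down maps are b-submersions and compositions of b-submersions are b-submersions. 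Finally, b-normality is verified using Proposition \ref{prop:blowupfacts}(4) by examining the exponent matrix: each old boundary hypersurface still maps to at most one face (by simplicity of $f$), and each new face arising from $S$ maps only to the new face from $T$.

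The main obstacle I expect is the bookkeeping required to verify b-normality when $T$ meets a corner of $Y$, because then the preimage decomposes into several p-submanifolds $S_i$ each of which may touch several boundary hypersurfaces of $X$; one must confirm that after blow-up the lift of each $S_i$ maps to the single new face of $[Y;T]$ and to no higher-codimension face. A secondary technical point is justifying the minimality of $S$, i.e.\ that there is a unique coarsest decomposition of $f^{-1}(T)$ into p-submanifolds of $X$; this is where the simplicity hypothesis enters essentially, by forcing the pullback equations to be ``monomial'' in the boundary defining functions and thereby ruling out pathological splittings.
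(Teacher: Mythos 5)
The paper does not prove this lemma at all: it is quoted verbatim from \cite[Lemma 2.5]{hmm} and used as a black box in the construction of the triple space, so there is no internal proof to compare against; what you have written is an attempted reproof of the cited external result.

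As such, your sketch has a genuine gap at its foundation. You assert that, because $f$ is simple, every boundary defining function of $Y$ pulls back, up to a positive smooth factor, to a \emph{single} boundary defining function of $X$. That is not what simplicity says: simplicity means the exponent matrix has entries $0$ or $1$ and each boundary hypersurface $G$ of $X$ satisfies $e_f(G,H)=1$ for exactly one $H$, equivalently $f^*\rho_Y=a\,\rho_X$ for \emph{total} boundary defining functions. A single $\rho_H$ may still pull back to a product of several boundary defining functions of $X$; for instance $[0,\infty)^2\ni(x_1,x_2)\mapsto x_1x_2$ is a simple b-fibration. If your claim were correct, $f^{-1}(T)$ would locally be cut out by the vanishing of a subset of coordinates, hence would be a single p-submanifold, and the collection $S$ --- the entire point of the lemma --- would be a singleton; your second paragraph, which says the local equations of $f^{-1}(T)$ involve boundary defining functions of several distinct hypersurfaces, contradicts your first on exactly this point. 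Because the pullbacks of the cutting functions of $T$ are monomials in several boundary defining functions, the preimage genuinely decomposes into several p-submanifolds (e.g.\ if $f^*x_1'=x_1x_2$ and $f^*x_2'=x_3x_4$ with $T=\{x_1'=x_2'=0\}$, the pieces $\{x_1=x_3=0\}$ and $\{x_1=x_4=0\}$ occur), and these are in general neither nested, nor disjoint, nor transverse, so the appeal to Proposition~\ref{prop:blowupfacts}(1) does not show that $[X;S]$ is independent of the order of blow-up; that independence is itself part of what \cite[Lemma 2.5]{hmm} establishes, via the local normal form for b-fibrations and an explicit computation of the lifted boundary defining functions, b-differential, and exponent matrix. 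Your outline of the remaining verifications (b-map property, b-submersion via blow-down maps, b-normality via the exponent matrix) is the right shape of argument and parallels how the paper handles the related Lemma~\ref{lem:newlemma}, but as written the proof does not go through without correcting the structure of the pullbacks and supplying the commutation argument.
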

	This lemma guides our construction of the triple space using the construction of the double space.  
	
	\subsection{Lifting the projection maps}  
	To construct the triple space from $M^3$ we first blow up $\mathcal O=\{T'=T''=0\}$.  
	The spatial variables are unaffected by this blow-up and so the space
	\begin{equation}\label{eq:blowupseparate}
		[M^3;\mathcal O]=\Omega_0^3\times[[0,1)\times[0,1);\{0,0\}].
	\end{equation}
	Denote the new front face by $F_{\mathcal O}$, and as usual continue to denote the $T''=0$ face by $F_{TL}$ and the $T'=0$ face by $F_{TR}$. We claim:
	\begin{lemma}\label{lem:easybfib} The projections $\pi_C$, $\pi_L$, and $\pi_R$ lift by continuity to projections $\Pi_C$, $\Pi_L$, and $\Pi_R$ with domain $[M^3;\mathcal O]$ and ranges as in \eqref{eq:piC}, \eqref{eq:piL}, \eqref{eq:piR}.  Moreover, $\Pi_C$, $\Pi_L$, and $\Pi_R$ are all b-fibrations. Under $\Pi_C$, the image of $F_{\mathcal O}$ is the face $\{T=0\}$, and the faces at $T'=0$ and $T''=0$ are mapped into the interior.
	\end{lemma}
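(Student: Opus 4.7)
The plan is to introduce explicit coordinates on $[M^3;\mathcal{O}]$ adapted to the parabolic blow-up, compute the pullbacks of all target boundary defining functions under each of $\Pi_C$, $\Pi_L$, $\Pi_R$, and then verify the three b-fibration properties (simpleness, b-submersion, and b-normality) directly from these pullbacks.

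First I would introduce coordinates on $[M^3;\mathcal{O}]$ of the form $(z,z',z'',R,\phi)$ near $F_{\mathcal{O}}$, with $R\geq 0$ and $\phi\in[0,\pi/2]$, related to the original variables by $\sqrt{t'}=R\cos\phi$ and $\sqrt{t''}=R\sin\phi$; this is consistent with the paper's standing convention that a $t$-parabolic blow-up is a normal blow-up in the $\sqrt{t'},\sqrt{t''}$ smooth structure. In these coordinates, $R$ is a boundary defining function for $F_{\mathcal{O}}$, while $\cos\phi$ defines the lift $F_{TR}$ of $\{t'=0\}$ and $\sin\phi$ defines the lift $F_{TL}$ of $\{t''=0\}$. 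The spatial boundary defining functions in the three $\Omega_0$ factors are unaffected by the blow-up. In the targets I use $\sqrt{t}$, $\sqrt{t'}$, and $\sqrt{t''}$ as the time-face bdfs, again by the parabolic convention. The lifts $\Pi_C,\Pi_L,\Pi_R$ then exist by continuity, because $t'+t''=R^2$, $t'=R^2\cos^2\phi$, and $t''=R^2\sin^2\phi$ are smooth in the new coordinates.

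The pullback computations are now immediate. For $\Pi_C$ we have $\Pi_C^*\sqrt{t}=R$, so $F_{\mathcal{O}}$ maps to $\{t=0\}$ with exponent one while $F_{TL}$ and $F_{TR}$ do not contribute to $\sqrt{t}$ and hence are sent into the interior of the target time direction, which is exactly the explicit image statement in the lemma. For $\Pi_L$ we have $\Pi_L^*\sqrt{t'}=R\cos\phi$, so $F_{\mathcal{O}}$ and $F_{TR}$ both map to $\{t'=0\}$ with exponent one while $F_{TL}$ goes into the target interior; $\Pi_R$ is symmetric with $\Pi_R^*\sqrt{t''}=R\sin\phi$. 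The spatial bdfs in the factors that survive pull back to themselves, and the spatial bdfs of the factor being integrated away do not appear, so the corresponding boundary hypersurfaces of the domain map into the target interior. In every case the exponent matrix has entries in $\{0,1\}$ and each domain hypersurface meets at most one target hypersurface, so all three maps are simple b-maps.

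The final step is to confirm b-submersion and b-normality. For b-submersion I would compute the b-differential directly on each boundary stratum: on $F_{\mathcal{O}}$, for example, the b-tangent bundle is generated by $R\partial_R$, $\partial_\phi$, and spatial vector fields, and $\Pi_{C,*}$ sends $R\partial_R\mapsto \sqrt{t}\,\partial_{\sqrt{t}}$, kills $\partial_\phi$ as well as every tangent vector to the middle $\Omega_0$-factor, and acts as the identity on tangent vectors to the first and third $\Omega_0$-factors, so it is surjective onto the target b-tangent space; the same type of calculation works on the remaining strata and for $\Pi_L,\Pi_R$. For b-normality I invoke Proposition \ref{prop:blowupfacts}(4): the exponent matrices computed above show that each row has at most one nonzero entry, so no boundary hypersurface of the domain is sent into a codimension-two corner of the target. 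Combined with simpleness this yields the b-fibration claims. The only genuine nuance, rather than a true obstacle, is keeping the parabolic convention (treating $\sqrt{t}$, $\sqrt{t'}$, and $\sqrt{t''}$ as smooth at the relevant time faces) consistent on both domain and target; once this bookkeeping is in place the entire argument reduces to explicit computation in the polar coordinates $(R,\phi)$.
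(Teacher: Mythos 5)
Your proof is correct and follows essentially the same route as the paper's: both rely on the product structure of \eqref{eq:blowupseparate} (the blow-up only affects the time variables, leaving the spatial factors untouched) to reduce the lemma to the elementary fact that $[\,[0,1)_{t'}\times[0,1)_{t''};\{(0,0)\}\,]\to[0,1)_t$, $t=t'+t''$, together with its left and right analogues, is a b-fibration. The paper simply asserts the one-dimensional statement, while you spell it out by introducing polar coordinates $\sqrt{t'}=R\cos\phi$, $\sqrt{t''}=R\sin\phi$, computing pullbacks of boundary defining functions, and verifying b-submersion and b-normality directly; this extra detail is consistent with, and fills in, what the paper leaves implicit.
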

	\begin{proof} This follows immediately from \eqref{eq:blowupseparate} and the corresponding statement considering only the time variables. Specifically, the map
		\[\Pi:[[0,1)_{T'}\times [0,1)_{T''}; \{(0,0)\}]\to[0,1)_{T},\ T=\sqrt{(T')^2+(T'')^2}\]
		is a b-fibration, where the image of the front face is $\{T=0\}$, and the image of the other two faces is $[0,1)$. Now the lifted projection $\Pi_C$ is simply $\Pi$ in the time variables and the usual projection from $M^3$ to $M^2$ in the spatial variables, and thus is itself a b-fibration. Similar arguments take care of $\Pi_L$ and $\Pi_R$, as left and right projection lift to b-fibrations from $[\mathbb R_+^2 ; \{0\}]$ to $\mathbb R$.
	\end{proof}
	
	We will now make further blow-ups to $[M^3;\mathcal O]$ which allow all three of these maps to be b-fibrations onto $M_0^2$. In each we use \ref{lem:hmmlemma}. Recalling \eqref{eq:blowupseparate}, we define submanifolds $\mathcal P_{VVV}$, $\mathcal P_{VV0}$, $\mathcal P_{V0V}$, et cetera of $[M^3;\mathcal O]$ by restricting to $\tilde V$ (the lift of $V$) each of the spatial variables which have index $V$ rather than $0$. For example,
	\[\mathcal P_{V0V}=\tilde V\times\Omega_0\times \tilde V\times[[0,1)\times[0,1);\{0,0\}].\]
	Using this notation, the lift of hvff under $\Pi_C$ is $\mathcal P_{V0V}$, under $\Pi_L$ is $\mathcal P_{VV0}$, and under $\Pi_R$ is $\mathcal P_{0VV}$. Then the lifts of hvlf and hvrf may be computed under each map. Application of Lemma \ref{lem:hmmlemma} shows that
	\[\Pi_C:[M^3;\mathcal O;\mathcal P_{V0V}]\to M_0^2,\]
	\[\Pi_L:[M^3;\mathcal O;\mathcal P_{VV0}]\to M_0^2,\]
	\[\Pi_R:[M^3;\mathcal O;\mathcal P_{0VV}]\to M_0^2,\]
	are each b-fibrations. In fact we can define a common domain: let
	\[M_0^3:=[M^3;\mathcal O;\mathcal P_{VVV};\mathcal P_{V0V};\mathcal P_{0VV};\mathcal P_{VV0}].\]
	Denote the new faces created by $F_{VVV}$, $F_{V0V}$, et cetera.
	\begin{proposition} $\Pi_C$, $\Pi_L$, and $\Pi_R$ all lift to b-fibrations from $M_0^3$ to $M_0^2$.
	\end{proposition}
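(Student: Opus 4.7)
The plan is to verify the b-fibration property for each of $\Pi_C$, $\Pi_L$, $\Pi_R$ separately. By the symmetry of the setup under relabeling of the three $\Omega_0$ factors, it suffices to detail the argument for $\Pi_C$; the cases of $\Pi_L$ and $\Pi_R$ follow analogously, with $\mathcal P_{V0V}$ replaced by $\mathcal P_{VV0}$ or $\mathcal P_{0VV}$ respectively (these being the preimages of the corresponding hv-faces under those projections). The strategy is to perform the blow-ups defining $M_0^3$ in an order adapted to $\Pi_C$: first to extend the b-fibration from $[M^3;\mathcal O]$ to one targeting $M_0^2$, and then to verify that the remaining three blow-ups do not destroy the b-fibration property.

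To reorder the blow-ups defining $M_0^3$, I would observe that $\mathcal P_{VVV}\subseteq\mathcal P_{V0V}\cap\mathcal P_{0VV}\cap\mathcal P_{VV0}$ while each pairwise intersection of the three outer submanifolds equals $\mathcal P_{VVV}$. Hence after blowing up $\mathcal P_{VVV}$ the proper transforms of the three outer submanifolds are pairwise disjoint; together with the nested inclusion, Proposition \ref{prop:blowupfacts}(1) and Lemma \ref{lem:newnested} allow me to rearrange the four inner blow-ups into the order $\mathcal P_{V0V};\mathcal P_{VVV};\mathcal P_{0VV};\mathcal P_{VV0}$. For the first step, Lemma \ref{lem:easybfib} shows $\Pi_C:[M^3;\mathcal O]\to M^2$ is a b-fibration, and $\mathcal P_{V0V}$ is (the lift of) the $\Pi_C$-preimage of $\cup_{j,k}\textrm{hvlf}_j\cap\textrm{hvrf}_k$, the closed p-submanifold of $M^2$ whose blow-up defines $M_0^2$; Lemma \ref{lem:hmmlemma} then extends $\Pi_C$ to a b-fibration $[M^3;\mathcal O;\mathcal P_{V0V}]\to M_0^2$.

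The remaining three blow-ups of $\mathcal P_{VVV}, \mathcal P_{0VV}, \mathcal P_{VV0}$ are p-submanifolds of the already-blown-up domain which are \emph{not} preimages of submanifolds of $M_0^2$, so Lemma \ref{lem:hmmlemma} does not apply directly. To see that they nevertheless preserve the b-fibration property, I would combine two observations. First, each successive blow-down map is both a b-submersion and a b-map; since compositions of b-submersions (respectively of b-maps) are again b-submersions (respectively b-maps) by Proposition \ref{prop:blowupfacts}(2),(3), the lifted map $M_0^3\to M_0^2$ remains a b-submersive b-map. Second, for b-normality I would verify, using Proposition \ref{prop:blowupfacts}(4), that the $\Pi_C$-image of each newly created front face lies inside a single boundary hypersurface of $M_0^2$: the image of $\mathcal P_{VVV}$ lies in $\ff$ since $\mathcal P_{VVV}\subseteq\mathcal P_{V0V}$ whose image became $\ff$, while the images of $\mathcal P_{0VV}$ and $\mathcal P_{VV0}$ each lie in a single hv-face of $M_0^2$ (determined by which spatial coordinate appearing in $\Pi_C$ is constrained to $\tilde V$). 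The main obstacle is this bookkeeping step: verifying that successive lifts remain p-submanifolds and that the images of new front faces never fall into codimension-two corners of $M_0^2$. Lemma \ref{lem:newlemma} provides the clean framework for this verification, specifically through its case 1 condition applied to each additional blow-up in turn.
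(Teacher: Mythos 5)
Your proposal follows the same route as the paper: reorder the nested and pairwise-disjoint blow-ups to exhibit $M_0^3$ as a further blow-up of each domain space $[M^3;\mathcal O;\mathcal P_{V0V}]$ (resp.\ $\mathcal P_{VV0}$, $\mathcal P_{0VV}$), conclude that the lifted projections are b-submersions by composing the known b-fibrations with blow-down maps, and then verify b-normality by checking that no boundary hypersurface is carried into a codimension-two corner of $M_0^2$. The one slip is in your final step: Lemma \ref{lem:newlemma} governs what happens when the \emph{target} is blown up, not the domain, so it is not the right tool for the remaining three blow-ups, which live in the source $M_0^3$; the correct framework is exactly Proposition \ref{prop:blowupfacts}(4), i.e.\ the exponent-matrix check, which is what the paper invokes (and then omits, pointing to the analogous combinatorial computation in \cite{etdeo1}).
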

	\begin{proof}First observe that $M_0^3$ is a blow-up of each of the three domain spaces for $\Pi_C$, $\Pi_L$, and $\Pi_R$. Indeed, begin with the domain space for $\Pi_C$ and blow up the lift of $\mathcal P_{VVV}$. This blow-up is nested with the blow-up of $\mathcal P_{V0V}$, so it may be done before that. So a blow-up of the domain space for $\Pi_C$ is
		\[[M^3;\mathcal O;\mathcal P_{VVV};\mathcal P_{V0V}].\]
		Now we can blow up the lifts of $\mathcal P_{0VV}$ and $\mathcal P_{VV0}$, which are disjoint from the lift of $\mathcal P_{V0V}$ and thus can be done in any order. Analogous arguments show that $M_0^3$ is a blow-up of each of the three domain spaces. Since all our p-submanifolds are intersections of boundary hypersurfaces, by Proposition \ref{prop:blowupfacts}b), $\Pi_C$, $\Pi_R$, and $\Pi_L$ are all b-submersions from $M_0^3$ to $M_0^2$.
		
		At this point, proving these maps are b-fibrations is simply a matter of checking their exponent matrices to make sure no boundary hypersurface is mapped into a corner. This calculation is combinatorial, very similar to the proof of Lemma 3.14 in \cite{etdeo1}, and as a result we omit it.
	\end{proof}
	
	Having proven that $\Pi_C$ is a b-fibration from $M_0 ^3 \to M_0 ^2$, we shall now lift $\Pi_C$ to be a b-fibration onto $M^2 _{rh}$. 
	Let $\mathcal O_{V0V}=F_{\mathcal O}\cap\{z=z''\in \tilde V\}$, with a similar definition for $\mathcal O_{E0E}$, $\mathcal O_{VVV}, \mathcal O_{EEE},$ etc. Using Lemma \ref{lem:hmmlemma}, we obtain that $\Pi_C$ lifts to a b-fibration
	\[\Pi_C:[M_0^3;\mathcal O_{V0V};\mathcal O_{E0E}]\to M^2_{rh}.\]
	Call the new faces $F_{\mathcal OV0V}$ and $F_{\mathcal OE0E}$. We are now in good shape with $\Pi_C$ but we need to do more work in order for $\Pi_L$ and $\Pi_R$ to be b-maps onto $M_h^2$ and $M_{rh}^2$ respectively.
	
	To begin the extra work, we do some more blow-ups which preserve the b-fibration property of $\Pi_C$. Begin with $\mathcal O_{VVV}$, which in $M_0^3$ is $F_{\mathcal O}\cap\{z=z'=z''\in \tilde V \}$. Using notation as before, we claim that
	\[\Pi_C:M^3_{cen}:=[M_0^3;\mathcal O_{V0V};\mathcal O_{E0E};\mathcal O_{VVV};\mathcal O_{EEE}]\to M^2_{rh}\]
	is a b-fibration. Indeed, this follows from two applications of \cite[Lemma 2.7]{hmm}\footnote{See in particular the comment after the proof in \cite{hmm}; in this case $\Pi_C$ restricts to a b-fibration from $\mathcal O_{VVV}$ onto ff and also from $\mathcal O_{EEE}$ onto sf, which is sufficient.}. Moreover, by an application of Lemma \ref{lem:newnested}, we have
	\[M^3_{cen}=[M_0^3;\mathcal O_{VVV};\mathcal O_{EEE};\mathcal O_{V0V};\mathcal O_{E0E}].\]
	Continuing in this vein, let $L_{0VV}=F_{TL}\cap\{z'=z''\in \tilde V\}$, with similar notations for $L_{0EE}$, $R_{VV0}$, $R_{EE0}$, and let $R_{diag}=F_{TL}\cap\{z=z'\}$. Let
	\[M^3_{rh,c}:=[M^3_{cen};R_{VV0};R_{EE0};R_{diag};L_{0VV};L_{0EE}].\]
	Denote the five new boundary hypersurfaces by $F_{RVV0}$, $F_{REE0}$, $F_{TRD}$, 
	$F_{L0VV}$, and $F_{L0EE}$. None of the five new boundary hypersurfaces are mapped into a corner by $\Pi_C$; their images are hvrf, $\cup_j E_{j0}$, the interior, hvlf, and $\cup_j E_{0j}$ respectively. Thus by the same argument with \cite[Lemma 2.17]{hmm}, $\Pi_C:M^3_{rh,c}\to M^2_{rh}$ is a b-fibration.
	
	Finally, using similar notation, we define p-submanifolds $\mathcal O_{VV0}$, $\mathcal O_{EE0}$, $\mathcal O_{0VV}$, $\mathcal O_{0EE}$, and $\mathcal O_D$, which is the interior lift of $\{t'=t''=0,z=z'\}$ in our new space. Define the \emph{reduced triple heat space}
	\[M^3_{rh}:=[M^3_{rh,c};\mathcal O_{VV0};\mathcal O_{EE0};\mathcal O_{0VV};\mathcal O_{0EE};\mathcal O_{D}].\]
	
	Call the new boundary hypersurfaces $F_{\mathcal OVV0}$, $F_{\mathcal OEE0}$, $F_{\mathcal O0VV}$, $F_{\mathcal O0EE}$, and $F_{\mathcal OD}$. It is no longer true that $\Pi_C$ is a b-fibration from $M^3_{rh}$ onto $M^2_{rh}$, but it is a b-map.
	
	
	We must also know something about $\Pi_L$ and $\Pi_R$:
	\begin{proposition}
		$\Pi_L$ and $\Pi_R$ lift by continuity to well-defined b-maps from $M^3_{rh}$ to $M^2_h$ and $M^2_{rh}$ respectively. 
	\end{proposition}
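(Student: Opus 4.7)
The plan is to obtain $\Pi_L$ and $\Pi_R$ as b-submersions by starting from the known b-fibration on $M_0^3$, composing with blow-down maps, and then lifting through the successive blow-ups that build $M^2_h$ from $M_0^2$ (for $\Pi_L$) and $M^2_{rh}$ from $M_0^2$ (for $\Pi_R$). Both maps are already b-fibrations from $M_0^3$ to $M_0^2$ by the previous proposition, and the blow-down $M^3_{rh} \to M_0^3$ is a b-submersion by Proposition \ref{prop:blowupfacts}, so composition immediately yields b-submersions $M^3_{rh}\to M_0^2$ in both cases. What remains is to lift through the blow-ups that produce $M^2_h$ and $M^2_{rh}$.

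Recall that $M^2_h$ is obtained from $M_0^2$ by three successive $t$-parabolic blow-ups: first $\cup_j \hvff_{jj} \cap \{t=0\}$ (creating $\ff$); then the lift of $\{(z,z',t): z=z' \in E,\ t=0\}$ (creating $\sff$); then the lift of the time diagonal at $t=0$ (creating $\mathrm{td}$). The space $M^2_{rh}$ is the intermediate space after the first two of these. The plan for $\Pi_L$ is to apply Case 2 of Lemma \ref{lem:newlemma} three times in sequence, and for $\Pi_R$ twice. In each application the p-submanifold being blown up is an intersection of two boundary hypersurfaces in the target, and the emptiness condition $\cap_j \cup_{k \in I_j \setminus K} X_k = \emptyset$ must be checked against the boundary hypersurfaces of the current source space.

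The main expected obstacle is the combinatorial verification of this emptiness condition at each step. The key observation, which drives the whole construction, is that the auxiliary blow-ups performed in building $M^3_{rh}$ out of $M_0^3$ were chosen precisely so these checks go through. For instance, when lifting $\Pi_L$ through the $\ff$-blow-up, the relevant target intersection is $\hvff_{jj} \cap \{t=0\}$; the source boundary hypersurfaces mapping into $\hvff_{jj}$ come from the lifts of $F_{VV0}$, $F_{VVV}$, and their vertex/parabolic descendants, while those mapping into $\{t=0\}$ include the lifts of $F_{TR}$ and $F_{\mathcal O}$. The earlier blow-ups of $\mathcal O_{VVV}$ and $\mathcal O_{VV0}$ precisely separate the offending pairs, so that $I_1 \setminus K$ and $I_2 \setminus K$ no longer share a common source boundary hypersurface. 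Analogously, lifting through the $\sff$-blow-up is handled by the $R_{EE0}$, $L_{0EE}$, $\mathcal O_{EE0}$, and $\mathcal O_{0EE}$ blow-ups, and lifting through the $\mathrm{td}$-blow-up is handled by $R_{diag}$ and $\mathcal O_D$. The argument for $\Pi_R$ is identical, with the roles of the left and right spatial factors exchanged, and uses exactly the $L$- and $R$-flavoured blow-ups in $M^3_{rh,c}$.

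Once each lift is shown to be a b-map, the b-submersion property is automatic by the same argument used at the end of the proof of Lemma \ref{lem:newlemma}: since the blow-down on the target side is itself a b-submersion and (matching dimensions) a bijection on b-tangent fibres, surjectivity of the b-differential of the lift follows from surjectivity of the b-differential of the already-established map into $M_0^2$. Thus, modulo the combinatorial bookkeeping described above, we obtain b-submersions $\Pi_L : M^3_{rh} \to M^2_h$ and $\Pi_R : M^3_{rh} \to M^2_{rh}$, as claimed.
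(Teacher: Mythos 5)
Your overall strategy matches the paper's: start from the known b-fibrations on $M^3_0$, compose with blow-down maps to obtain b-submersions $M^3_{rh}\to M^2_0$, and then lift through the blow-ups creating $\ff$, $\sff$, and $\mathrm{td}$ by checking the hypotheses of Lemma \ref{lem:newlemma}. You also correctly identify most of the auxiliary blow-ups in $M^3_{rh}$ that provide the needed separation.

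However, there is a genuine error in your claim that ``in each application the p-submanifold being blown up is an intersection of two boundary hypersurfaces in the target,'' so that Case 2 of Lemma \ref{lem:newlemma} applies throughout. This is true for the $\ff$-blow-up ($\hvff_{jj}\cap\mathrm{tf}$ is a codimension-2 corner), but it is \emph{false} for the $\mathrm{td}$-blow-up: the lift of the interior time diagonal $\{z=z',\,t=0\}$ lies inside the single boundary hypersurface $\mathrm{tf}$ and is not itself an intersection of boundary hypersurfaces. For that step the paper invokes \emph{Case 1} of Lemma \ref{lem:newlemma}, checking that the preimage of the diagonal is a union of boundary hypersurfaces --- explicitly, that $\rho_{\mathrm{td}}$ pulls back to $\rho_{\mathcal O D}\,\rho_{TRD}$. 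Your framework collapses here, because the emptiness condition you quote is the Case 2 condition, and there is no relevant ``$Z=Y_{i_1}\cap\cdots\cap Y_{i_m}$'' to feed into it. You do name the correct separating blow-ups $R_{\mathrm{diag}}$ and $\mathcal O_D$, but the mechanism by which they enter is Case 1, not Case 2.

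A smaller bookkeeping slip: for the $\ff$-step you cite $\mathcal O_{VVV}$ and $\mathcal O_{VV0}$ as the separating blow-ups, but the separation in the paper's argument for $\Pi_L$ comes from $\mathcal O_{VV0}$ and $R_{VV0}$ (creating $F_{\mathcal OVV0}$ and $F_{RVV0}$, which lie in the set $K$ and shield the $t'>0$ hvff-faces $F_{V_jV_jV_l}$, $F_{V_jV_j0}$ from the $t'=0$ faces). The face $F_{\mathcal OVVV}$ also lies in $K$, but it is $R_{VV0}$ rather than $\mathcal O_{VVV}$ that does the shielding work. Your concluding paragraph about deducing the b-submersion property from the blow-down is fine but redundant: once the Case 1 or Case 2 hypotheses are verified, Lemma \ref{lem:newlemma} already delivers a b-submersion.
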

	\begin{proof} It is immediate by composing with the blow-down map that $\Pi_L$ and $\Pi_R$ lift to well-defined b-maps from $M^3_{rh}$ to $M_0^2$. The question is whether they still lift to b-maps when $M_0^2$ is blown up to create ff, then sf, then (for $\Pi_L$) td. But this may be checked directly: computing the pullbacks of the boundary defining functions for the boundary hypersurfaces of $M^2_{h}$ and $M^2_{rh}$ shows that each is a product of boundary defining functions on $M^3_{rh}$. The specific products are given below, in Lemma \ref{lem:exponentmatrices}.
	\end{proof}
	
	\begin{figure} \includegraphics{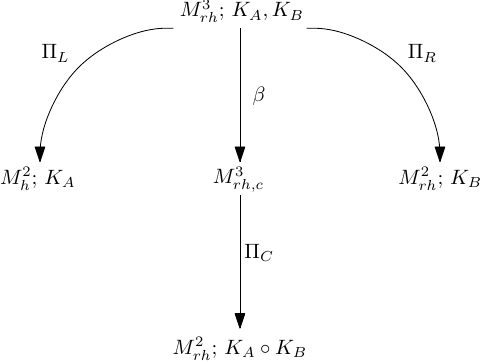} \caption{The schematic diagram for the construction of the reduced triple space as required here for our composition rule.  The kernel $K_A$ lifts from the double heat space, $M_h ^2$, to the triple heat space via pullback by the projection map $\Pi_L$.  The kernel $K_B$ vanishes to infinite order at td, so it is pc on the reduced double heat space.  It lifts to the triple heat space via pullback by the projection map $\Pi_R$.  On $M_{rh} ^3$ the two kernels are composed, and the result is then pushed forward by the blow-down map to $M_{rh,c} ^3$, followed by the projection map $\Pi_C$ to $M_{rh} ^2$.} 
	\end{figure}

	\subsection{Combinatorics of b-maps}
	Now we come to the key combinatorial lemma for composition. Recall that $\Pi_L:M^3_{rh}\to M^2_h$ and $\Pi_R:M^3_{rh}\to M^2_{rh}$ are b-maps and $\Pi_C:M^3_{rh,c}\to M^2_{rh}$ is a b-fibration.
	
	\begin{lemma}\label{lem:exponentmatrices} The exponent matrix entries for the b-maps $\Pi_L:M^3_{rh}\to M^2_h$, $\Pi_R: M^3_{rh}\to M^2_{rh}$, and $\Pi_C:M^3_{rh,c}\to M^2_{rh}$ are all zero, except for the following, which are 1:
		
		\begin{multline}\label{eq:exponentmatrixpil}
			\mbox{For }\Pi_L: (F_{TR},\tf),(F_{\cO},\tf),(F_{\cO V_j0V_j},\tf), (F_{\cO E_j0E_j},\tf), (F_{\cO 0V_jV_j},\tf),\\
			(F_{\cO 0E_jE_j},\tf), (F_{TRD},\td),(F_{\cO D},\td), (F_{\cO V_j V_j V_j},\ff_j), (F_{R V_j V_j 0}, \ff_j), (F_{\cO V_j V_j 0},\ff_j),\\
			(F_{\cO E_j E_j E_j},\sff_j), (F_{R E_j E_j 0},\sff_j), (F_{\cO E_j E_j 0},\sff_j),(F_{V_jV_kV_l},\hvff_{jk}),(F_{V_jV_k0},\hvff_{jk}),\\
			(F_{V_j0V_l},\hvrf_j),(F_{V_j00},\hvrf_j),(F_{\cO V_j0V_j},\hvrf_j),\\(F_{0V_kV_l},\hvlf_k),(F_{0V_k0},\hvlf_k),
			(F_{\cO 0V_jV_j},\hvlf_j), (F_{L0V_jV_j},\hvlf_j),\\
			(F_{\cO E_j0E_j},E_{j0}),(F_{j00},E_{j0}),(F_{\cO 0E_jE_j},E_{0j}),(F_{0j0},E_{0j}),(F_{L0E_jE_j},E_{0j}),
		\end{multline}
		with $F_{TL}$, $F_{00l}$, and $F_{00V_l}$ mapping to the interior.
		\begin{multline}\label{eq:exponentmatrixpir}
			\mbox{For }\Pi_R: (F_{TL},\tf), (F_{\mathcal O},\tf), (F_{\mathcal OD},\tf), (F_{\mathcal OV_j0V_j},\tf), (F_{\mathcal OE_j0E_j},\tf),\\
			(F_{\mathcal OV_jV_j0},\tf), (F_{\mathcal OE_jE_j0},\tf),
			(F_{\mathcal OV_jV_jV_j},\ff_j), (F_{L0V_jV_j}, \ff_j), (F_{\mathcal O0V_jV_j},\ff_j),\\
			(F_{\mathcal OE_jE_jE_j},\sff_j), (F_{L0E_jE_j},\sff_j), (F_{\mathcal O0E_jE_j},\sff_j),(F_{V_jV_kV_l},\hvff_{kl}),(F_{0V_kV_l},\hvff_{kl}),\\
			(F_{V_j0V_l},\hvlf_l),(F_{00V_l},\hvlf_l),(F_{\mathcal OV_j0V_j},\hvlf_j),\\
			(F_{V_jV_k0},\hvrf_k),(F_{0V_k0},\hvrf_k), (F_{\mathcal OV_jV_j0},\hvrf_j), (F_{RV_jV_j0},\hvrf_j),\\
			(F_{\mathcal OE_j0E_j},E_{0j}),(F_{00j},E_{0j}),(F_{\mathcal OE_jE_j0},E_{j0}),(F_{0j0},E_{j0}),(F_{RE_jE_j0},E_{j0}),
		\end{multline}
		with $F_{TR}$, $F_{j00}$, $F_{V_j00}$, and $F_{TRD}$ mapping to the interior.
		\begin{multline}\label{eq:exponentmatrixpic}
			\mbox{For }\Pi_C: (F_{\mathcal O},\tf), (F_{j00}, E_{j0}), (F_{RE_jE_j0},E_{j0}), (F_{00j}, E_{0j}),\\ (F_{L0E_jE_j},E_{0j}), (F_{\mathcal OV_jV_jV_j},\ff_j),(F_{\mathcal OV_j0V_j},\ff_j), (F_{\mathcal OE_jE_jE_j},\sff_j), (F_{\mathcal OE_j0E_j},\sff_j),\\
			(F_{V_jV_kV_l}, \hvff_{jl}), (F_{V_j0V_l}, \hvff_{jl}), (F_{V_jV_k0},\hvrf_j), (F_{V_j00},\hvrf_j), (F_{RV_jV_j0}, \hvrf_j),\\
			(F_{0V_kV_l}, \hvlf_l),(F_{00V_l},\hvlf_l),
			(F_{L0V_lV_l},\hvlf_l),
		\end{multline}
		with $F_{TR}$, $F_{TL}$, $F_{0j0}$, $F_{0V_j0}$, and $F_{TRD}$ mapping to the interior.
	\end{lemma}
	
	\begin{proof} All of the exponent matrices are computed the same way: by computing pullbacks of boundary defining functions. Consider, for example, the face $\ff_j$ of $M^2_{rh}$. The faces of $M_{rh,c} ^3$ which are in the preimage of $\ff_j$ under $\Pi_C$ are all of the faces where $z=z''=V_j$ and $\sqrt{(T')^2+(T'')^2}=0$, that is $T'=T''=0$. 
		These are precisely $F_{\mathcal OV_jV_jV_j}$ and $F_{\mathcal OV_j0V_j}$, so those two faces map to $\ff_j$, and the corresponding exponent matrix entries are 1. Computing these pullbacks for each boundary hypersurface of $M^2_{rh}$ yields the desired exponent matrix for $\Pi_C$. By a similar process we obtain the exponent matrices for $\Pi_L$ and $\Pi_R$.
		
		Note also that the fact that $\Pi_C$ is a b-fibration can be observed directly: for each boundary hypersurface $G$ of $M^3_{rh,c}$ there is at most one boundary hypersurface $H$ of $M^2_{rh}$ such that the $(G,H)$ exponent matrix entry for $\Pi_C$ equals 1.
	\end{proof}
	
	\subsection{Densities} Our kernels on the double space are most naturally considered as ``full right densities" with respect to the usual metric on $\Omega$. For example, the kernels of our operators $A$ and $B$ will be
	\[K_A(t',z,z')\,dt'\, dz';\ K_B(t'',z',z'')\, dt''\, dz'', \quad \textrm{ respectively.}\]
	Multiplying the two, then integrating over $t=t'+t''$ and $z'$, yields
	\[K_{A\circ B}(t,z.z')\,dt\, dz''.\]
	If we multiply everything in the expressions by $dz$, we have the full-density form we need for the pushforward theorem; see \cite[Theorems 4 and 5]{corners} and \cite[Theorem 2.3]{hmm}.
	
	To apply the aforementioned pushforward theorem, we need to transform our natural metric densities into canonical full densities and b-densities on $M^3_{rh,c}$ and $M^2_{rh}$. Here are the formulas for those transformations. Throughout, let $\nu(X)$ and $\nu_b(X)$ be canonical densities and b-densities on a manifold with corners $X$, and let $\rho_{tot}(X)$ be a product of boundary defining functions for all boundary hypersurfaces of $X$. It is immediate that $\nu_b(X)=\rho_{tot}^{-1}(X)\nu(X)$.
	
	\begin{proposition}\label{prop:densityblowup} The density bundles transform under blow-ups as follows:  
		\begin{align*} &\beta^*(\nu(\Omega\times\Omega\times[0,1)))=\rho_{ff}^4\rho_{sf}^3\rho_{hvff}^3\rho_{hvlf}\rho_{hvrf}\nu(M^2_{rh}); \\ 
			&\beta^*(\nu(\Omega\times\Omega\times\Omega\times[0,1)))=\rho_{\mathcal O}\rho_{\mathcal OVVV}^7\rho_{\mathcal OEEE}^6\rho_{\mathcal OV0V}^5\rho_{\mathcal OE0E}^4\rho_{VVV}^5\rho_{V0V}^3 \\ 
			&\cdot \rho_{0VV}^3\rho_{VV0}^3\rho_{V00}\rho_{0V0}\rho_{00V}
			\rho_{RVV0}^4\rho_{REE0}^3\rho_{L0VV}^4\rho_{L0EE}^3\rho_{TRD}^2\nu(M^3_{rh,c}).\end{align*}
	\end{proposition}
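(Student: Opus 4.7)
Both formulas will follow by tracking how the Riemannian density transforms through each blow-up, using three standard rules. (i) For a normal blow-up $\beta:[X;Y]\to X$ of a $p$-submanifold of codimension $k$, the canonical density lifts as $\beta^*\nu(X)=\rho_{ff}^{k-1}\nu([X;Y])$, where $\rho_{ff}$ is the bdf of the new front face. (ii) If a pre-existing boundary hypersurface $A$ contains the blow-up center, its bdf transforms as $\beta^*\rho_A=\rho_{ff}\tilde\rho_A$, so each containing face contributes one extra power of $\rho_{ff}$ per power of $\rho_A$ already present in the density. (iii) A $t$-parabolic blow-up becomes a normal blow-up in the $\sqrt{t}$-smooth structure used on $M_{rh}^2$ and $M_{rh,c}^3$ at the lifted tf-faces, so rules (i) and (ii) apply as stated with the codimension computed in the $\sqrt{t}$-variable. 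Finally, the Riemannian density on $\Omega$ near a vertex equals $r\,dr\,d\theta$ in polar coordinates, so its lift to $\Omega_0$ picks up a factor of $r=\rho_{\tilde V}$ at each vertex in each factor.

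\textbf{Double-space formula.} Applying the rules in construction order, the Riemannian density first lifts to $M^2=\Omega_0^2\times[0,1)$ with factor $\rho_{hvrf}\rho_{hvlf}$ by the vertex rule in each factor. The codimension-$2$ blow-up of $\cup_{j,k}hvlf_j\cap hvrf_k$ contributes $\rho_{hvff}$ from (i) and two additional powers from (ii), since both $\rho_{hvrf}$ and $\rho_{hvlf}$ contain the center, for a total of $\rho_{hvff}^3$. The parabolic blow-up of $\cup_j hvff_{jj}\cap\{t=0\}$, also codim $2$ in $\sqrt{t}$-coordinates, contributes $\rho_{ff}$ from (i) and $\rho_{ff}^3$ from (ii) (since $\rho_{hvff}^3$ contains the center), yielding $\rho_{ff}^4$. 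The final parabolic blow-up of the lifted edge-diagonal at $\{t=0\}$ has codim $4$; its center is contained in no face whose bdf has positive power in the current density, so rule (i) alone contributes $\rho_{sf}^3$. Collecting all factors gives the first formula.

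\textbf{Triple-space formula and main obstacle.} The same machinery applies to the longer sequence constructing $M_{rh,c}^3$. The Riemannian density lifts to $\Omega_0^3\times[0,1)^2$ with factor $\rho_{V00}\rho_{0V0}\rho_{00V}$. Proceeding in construction order through the fourteen blow-ups---$\mathcal O$; $\mathcal P_{VVV},\mathcal P_{V0V},\mathcal P_{0VV},\mathcal P_{VV0}$; $\mathcal O_{V0V},\mathcal O_{E0E},\mathcal O_{VVV},\mathcal O_{EEE}$; then $R_{VV0},R_{EE0},R_{diag},L_{0VV},L_{0EE}$---I will compute at each step the codimension of the center and enumerate the previously created boundary hypersurfaces containing it. For instance, $\mathcal P_{VVV}$ has codim $3$ with its center contained in the three vertex faces of exponent $1$, giving $\rho_{VVV}^{2+3}=\rho_{VVV}^5$; $\mathcal P_{V0V}$ has codim $2$ with two such containing vertex faces, giving $\rho_{V0V}^3$; $\mathcal O$ itself has codim $2$ with no containing face whose bdf is yet present, giving just $\rho_{\mathcal O}$. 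The remaining centers, which sit at intersections of $F_{\mathcal O}$ with lifted diagonals and with faces created by the $\mathcal P$-blow-ups, are handled by the same principle. The main obstacle is purely combinatorial: for the $\mathcal O_\cdot$-, $R$-, and $L$-labeled centers one must carefully determine which of the many accumulated faces contain each center, since each such containment contributes one power of the new $\rho_{ff}$ per power of that face's bdf already in the density. Maintaining an explicit incidence table at each stage of the construction makes the enumeration tractable and reproduces the exponents $1,5,3,3,3,5,4,7,6,4,3,2,4,3$ in the stated formula.
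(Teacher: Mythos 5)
Your approach is correct and is essentially the same as the paper's, differing only in presentation. The paper's proof invokes the single-blowup density rule $\beta^*\nu = \rho^{\operatorname{codim}-1}\nu$ (from Proposition C.5 of \cite{ars1}) and then states the exponents by computing, in effect, the codimension of the \emph{image} of each front face in the original un-blown-up space (e.g., "a finite collection of points, so the codimension is 5"); for iterated blow-ups this shortcut silently encodes the lifting of previously-accumulated boundary defining functions. You instead apply the basic rule using the codimension of each center \emph{in the space at the time of the blowup} and then make the lifting of earlier $\rho_A$'s an explicit, separate step. These two bookkeeping schemes are equivalent, but yours is the more transparent of the two, and it avoids the need to justify that the codimension of the image in the original space is the "right" quantity (a fact the paper assumes without comment). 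Your observation that the Riemannian density on $\Omega$ contributes a factor of $r = \rho_{\tilde V}$ at each vertex in each factor, yielding $\rho_{hvlf}\rho_{hvrf}$ (resp.\ $\rho_{V00}\rho_{0V0}\rho_{00V}$), is exactly the content of the paper's footnote. Your $\sqrt{t}$ remark correctly handles the parabolic blow-ups. Like the paper's own proof, your proposal for the triple-space formula is a sketch — you describe the incidence-table method rather than carrying it out — but the method is sound and does reproduce all seventeen exponents. One caution worth flagging: for the later blow-ups ($\mathcal O_{VVV}$, $\mathcal O_{EEE}$, etc.) the lift of the center can sit entirely inside a front face created by an earlier blow-up (e.g.\ the lift of $\mathcal O_{EEE}$ lies inside $F_{\mathcal OE0E}$), which changes its codimension from what a naive reading of the paper's set-theoretic notation $\{z=z'=z''\in E\}$ would suggest; the incidence table must track the codimension of the \emph{lift}, not of the original set, and this is where the calculation is most error-prone.
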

	\begin{proof} When blowing up a submanifold $F$ of a manifold with corners $W$, blow-up introduces a factor of $\rho^{\dim(W)-\dim(F)-1}$, equivalently $\rho^{\tiny{\mbox{codim}}(F)-1}$, with $\rho$ the defining function for the new (blown-up) face (see Proposition C.5 of \cite{ars1}). We repeatedly apply this\footnote{Note that even though $\Omega$ with the usual metric is not technically a manifold with corners, the same analysis works to write $\nu(\Omega)$ in terms of $\nu(\Omega_0)$.}.
		
		For $M^2_{rh}$, $\dim(W)=5$. The blow-up to produce ff blows up a finite collection of points, so the codimension is 5 and we acquire a $\rho_{\ff}^4$. The blow-up to produce sf, on the other hand, requires $t'=0$, $z=z'\in E$, so the codimension is 4 and we acquire a $\rho_{\sf}^3$. The blow-up to produce hvff has codimension 4, and the blow-ups to produce hvlf and hvrf have codimension 2. Putting this all together yields the result.
		
		A similar analysis works for $M^3_{rh,c}$, being careful about repeated blow-ups. For example, blowing up $\mathcal O$ introduces a factor of $\rho_{\mathcal O}$ at first. However, when blowing up a submanifold of $\mathcal O$, $\rho_{\mathcal O}$ itself continues to lift. For example, when creating $F_{\mathcal OVVV}$, $\rho_{\mathcal O}$ lifts to $\rho_{\mathcal O}\rho_{\mathcal OVVV}$.
	\end{proof}

	Another important observation is that rather than $t'$ and $t''$, we are treating their square roots as the boundary defining functions, so canonical densities have $dT'$ and $dT'$. For example,
	\[\nu(\Omega\times\Omega\times[0,1))=dT dz dz'',\]
	and recalling that $dt=2TdT$,
	\[K_{A\circ B}dt dz dz''=2T K_{A\circ B}\nu(\Omega\times\Omega\times[0,1)).\]
	Similarly,
	\begin{align*} & K_A(t',z,z')K_B(t'',z',z'')dt' dt'' dz dz' dz'' = \\ & 4T'T''K_A(t',z,z')K_B(t'',z',z'')\nu(\Omega\times\Omega\times\Omega\times[0,1)^2). 
	\end{align*}
	
	\subsection{Composition theorem}
	To define various classes of operator kernels we use notation which is similar but not identical to \cite{MaVe}.  For an index family,
	$$\mathcal F=(F_{\ff_j}, F_{\sff_j}, F_{\hvff_{jk}}, F_{\hvrf_j}, F_{\hvlf_j}, F_{j0},F_{0j}, F_{\td}),$$ 
	define $\mathcal A^{\mathcal F}_{h}(M^2_h)$ to be the space of functions on $M_h^2$ which are pc  with index sets given at the respective faces by $\mathcal F$ and which also vanish to infinite order at tf.  We also define $\Psi_h ^{\mathcal F}$ as the set of pseudodifferential operators whose Scheartz kernels, as functions on $M_h^2$, are elements of $\mathcal A^{\mathcal F}_{h}(M^2_h)$.  
	
	We may also use the notation 
	\[ \mathcal A^{\alpha_{\ff_j}, \alpha_{\sff_j}, \alpha_{\hvff_{jk}}, \alpha_{\hvrf_j}, \alpha_{\hvlf_j}, \alpha_{j0}, \alpha_{0j}, \alpha_{\td}} _{h}\]  
	to indicate functions on $M_h^2$ which are elements of $\mathcal A_h^{\mathcal F}$ for some index family $\mathcal F$ whose index set at each face is bounded below by the corresponding $\alpha$. In other words these are functions on $M_h^2$ which are pc and which have leading order at each face no worse than the corresponding $\alpha$ (and which furthermore do not have terms of the form $x^{\alpha}(\log x)^p$ for $p\geq 1$). We also use $\Psi ^{\alpha_{\ff_j}, \alpha_{\sff_j}, \alpha_{\hvff_{jk}}, \alpha_{\hvrf_j}, \alpha_{\hvlf_j}, \alpha_{j0}, \alpha_{0j}, \alpha_{\td}}$ in the analogous way.

	\begin{theorem} \label{thm:comp} Suppose that $A$ is an operator whose Schwartz kernel $K_A \in \mathcal A^{\mathcal F_A} _h$ with 
		\[\mathcal F_A = (A_{\ff_j}, A_{\sff_j}, A_{\hvff_{jk}}, A_{\hvrf_j}, A_{\hvlf_j}, A_{0j}, A_{j0}, A_{\td}).\] 
		Suppose that $B$ is an operator whose Schwartz kernel $K_B \in \mathcal A^{\mathcal F_B} _h$ with 
		\[ \mathcal F_B = (B_{\ff_j}, B_{\sff_j}, B_{\hvff_{jk}}, B_{\hvrf_j}, B_{\hvlf_j}, B_{0j}, B_{j0}, B_{\td} = \infty).\]
		Suppose, finally, that
		\begin{equation}\label{eq:integrabilitycondition}A_{0j}+B_{j0}>-1,\ A_{\hvlf_j}+B_{\hvrf_j}>-2,\mbox{ and }A_{\td}>-4.
		\end{equation}
		Then the Schwartz kernel of the composition $A \circ B$ is an element of $\mathcal A^{\mathcal F} _h$,
		where $\mathcal F$ has index sets
		\begin{eqnarray*} 
			\begin{gathered} A_{\ff_j}+B_{\ff_j}+4   \textrm{ at } \ff_j, \\
				A_{\sff_j}+B_{\sff_j}+4  \textrm{ at } \sff_j , \\ 
				(\overline\cup_k(A_{\hvff_{jk}}+B_{\hvff_{kl}}+2))\overline\cup(A_{\hvrf_j}+B_{\hvlf_j}) \textrm{ at } \hvff_{jl},  \\ 
				(\overline\cup_k(A_{\hvff_{jk}}+B_{\hvrf_k}+2))\overline\cup(A_{\hvrf_j})\overline\cup(A_{\ff_j}+B_{\hvrf_j}+4) \textrm{ at } \hvrf_j ,  \\ 
				(\overline\cup_k(A_{\hvlf_k}+B_{\hvff_{kl}}+2))\overline\cup(B_{\hvlf_l})\overline\cup(A_{\hvlf_l}+B_{\ff_l}+4) \textrm{ at } \hvlf_l, \\ 
				A_{j0}\overline\cup(A_{\sf_j}+B_{j0}+4)  \textrm{ at } E_{j0}, \\ 
				B_{0j}\overline\cup(A_{0j}+B_{\sf_j}+4)  \textrm{ at } E_{0j}, \\ 
				\infty  \textrm{ at } \td. \end{gathered}  \end{eqnarray*} 
		
	\end{theorem} 
	\begin{proof} 
		By the pullback theorem (see \cite[Theorem 3]{calculus} or \cite[Theorem 2.2]{hmm}), $\Pi_L^*K_A$ is polyhomogeneous on $M^3_{rh}$ with index sets:
		\begin{equation*}
			\begin{gathered}
				A_{\td}\mbox{ at }F_{TRD}\mbox{ and }F_{\mathcal OD};\ A_{\ff_j}\mbox{ at }F_{\mathcal O V_jV_jV_j}, F_{RV_jV_j0},\mbox{ and }F_{\mathcal OV_jV_j0};\\
				A_{\sff_j}\mbox{ at }F_{\mathcal O E_jE_jE_j},F_{RE_jE_j0},\mbox{ and }F_{\mathcal OE_jE_j0};\ A_{\hvff_{jk}}\mbox{ at }F_{V_jV_kV_l}\mbox{ and }F_{V_jV_k0};\\
				A_{\hvrf_j}\mbox{ at }F_{V_j0V_l}\mbox{ and }F_{V_j00}; A_{\hvlf_k}\mbox{ at }F_{0V_kV_l}, F_{0V_k0},\mbox{ and }F_{L0V_kV_k};\\
				A_{j0}\mbox{ at }F_{j00}; A_{0j}\mbox{ at } F_{0j0}\mbox{ and }F_{L0E_jE_j};0\mbox{ at }F_{TL}, F_{00l},\mbox{ and } F_{00V_l};\mbox{ and finally}\\
				\infty\mbox{ at }F_{TR}, F_{\mathcal O},F_{\mathcal OV_j0V_j}, F_{\mathcal OE_j0E_j}, F_{\mathcal O0V_jV_j}, F_{\mathcal O0E_jE_j}.
			\end{gathered} 
		\end{equation*}
		Note in particular that at the four hypersurfaces in the domain mapped to intersections of two hypersurfaces in the range, the pullback index set is the sum of the index sets at the two range hypersurfaces. At each of these the sum ends up being infinity.
		Similarly, $\Pi_R^*K_B$ is polyhomogeneous on $M^3_{rh}$ with index sets
		\begin{equation*}
			\begin{gathered} 
				B_{\ff_j}\mbox{ at }F_{\mathcal OV_jV_jV_j}, F_{L0V_jV_j},\mbox{ and }F_{\mathcal O0V_jV_j}; \\ 
				B_{\sff_j}\mbox{ at }F_{\mathcal OE_jE_jE_j},F_{L0E_jE_j},\mbox{ and }F_{\mathcal O0E_jE_j};\\
				B_{\hvff_{kl}}\mbox{ at }F_{V_jV_kV_l}\mbox{ and }F_{0V_kV_l}; \, B_{\hvlf_l}\mbox{ at }F_{V_j0V_l}\mbox{ and }F_{00V_l}; \\
				B_{\hvrf_k}\mbox{ at } F_{V_jV_k0}, F_{0V_k0}, \mbox{ and }F_{RV_kV_k0}; \, B_{0j}\mbox{ at }F_{00j}; B_{j0}\mbox{ at }F_{0j0}\mbox{ and }F_{RE_jE_j0}; \\ 
				0\mbox{ at }F_{TR}, F_{j00}, F_{V_j00},\mbox{ and }F_{TRD};\mbox{ and finally }\\
				\infty\mbox{ at }F_{TL}, F_{\mathcal O}, F_{\mathcal OD}, F_{\mathcal OV_jV_j0}, F_{\mathcal OV_j0V_j}, F_{\mathcal OE_jE_j0}\mbox{ and } F_{\mathcal OE_j0E_j}.
			\end{gathered} 
		\end{equation*}
		It is also easy enough to compute the pullbacks of $T'$ and $T''$; they each have order 1 at each face in the lift of $T'=0$ and $T''=0$ respectively. Therefore the product $4T'T''(\Pi_L^*K_A)(\Pi_R^*K_B)$ is polyhomogeneous on $M^3_{rh}$ with index sets
		\begin{equation}\label{eq:indexsets1}
			\begin{gathered}
				A_{j0}\textrm{ at }F_{j00}; A_{0j}+B_{j0}\textrm{ at }F_{0j0}; B_{0j}\textrm{ at }F_{00j}; \\ 
				A_{\ff_j}+B_{\ff_j}+2\mbox{ at }F_{\mathcal OV_jV_jV_j}; A_{\sff_j}+B_{\sff_j}+2\mbox{ at }F_{\mathcal OE_jE_jE_j};  \\
				A_{\hvff_{jk}}+B_{\hvff_{kl}}\mbox{ at }F_{V_jV_kV_l};  A_{\hvff_{jk}}+B_{\hvrf_k}\mbox{ at }F_{V_jV_k0};  \\  
				A_{\hvrf_j}+B_{\hvlf_l}\mbox{ at }F_{V_j0V_l}; A_{\hvlf_k}+B_{\hvff_{kl}}\mbox{ at }F_{0V_kV_l};  \\ 
				A_{\hvrf_j}\mbox{ at }F_{V_j00}; A_{\hvlf_k}+B_{\hvrf_k}\mbox{ at }F_{0V_k0}; B_{\hvlf_l}\mbox{ at }F_{00V_l};  \\
				A_{\ff_j}+B_{\hvrf_j}+1\mbox{ at }F_{RV_jV_j0}; A_{\sff_j}+B_{j0}+1 \mbox{ at }F_{RE_jE_j0}; \\ 
				A_{\hvlf_j}+B_{\ff_j}+1\mbox{ at }F_{L0V_jV_j}; A_{0j}+B_{\sff_j}+1 \mbox{ at }F_{L0E_jE_j};  \\
				A_{\td}+1\mbox{ at }F_{TRD};  \\ 
				\infty\mbox{ at }F_{TL}, F_{TR}, F_{\mathcal O}, F_{\mathcal O D}, F_{\mathcal OV_jV_j0}, F_{\mathcal OE_jE_j0}, F_{\mathcal OV_j0V_j}, \\ 
				F_{\mathcal OE_j0E_j}, F_{\mathcal O0V_jV_j},  \mbox{ and } F_{\mathcal O0E_jE_j}.
			\end{gathered}
		\end{equation}
		
		Now we make the observation that on the front faces of each of the five blow-ups needed to create $M^3_{rh}$ from $M^3_{rh,c}$, the product $(\Pi_L^*K_A)(\Pi_R^*K_B)$ vanishes to infinite order. Applying Proposition \ref{prop:blowupfacts} five times, we see that $4T'T''(\Pi_L^*K_A)(\Pi_R^*K_B)$ is in fact polyhomogeneous conormal on $M^3_{rh,c}$ with index sets the same as in \eqref{eq:indexsets1}, with the exception of deleting the five extra faces.
		
		We would like to use the pushforward theorem, but first we must view $4T'T''(\Pi_L^*K_A)(\Pi_R^*K_B)$, currently a section of $\nu(\Omega\times\Omega\times\Omega\times[0,1)^2)$, as a section of $\nu(M^3_{rh,c})$. By Proposition \ref{prop:densityblowup}, as a section of $\nu(M^3_{rh,c})$, its orders are:

		\begin{equation}\label{eq:indexsets2}
			\begin{gathered} 
				A_{j0}\textrm{ at }F_{j00}; A_{0j}+B_{j0}\textrm{ at }F_{0j0}; B_{0j}\textrm{ at }F_{00j};\\ 
				A_{\ff_j}+B_{\ff_j}+9\mbox{ at }F_{\mathcal OV_jV_jV_j}; A_{\sff_j}+B_{\sff_j}+8\mbox{ at }F_{\mathcal OE_jE_jE_j};  \\
				A_{\hvff_{jk}}+B_{\hvff_{kl}}+5\mbox{ at }F_{V_jV_kV_l}; A_{\hvff_{jk}}+B_{\hvrf_k}+3\mbox{ at }F_{V_jV_k0}; \\   A_{\hvrf_j}+B_{\hvlf_l}+3\mbox{ at }F_{V_j0V_l}; A_{\hvlf_k}+B_{\hvff_{kl}}+3\mbox{ at }F_{0V_kV_l}; \\ 
				A_{\hvrf_j}+1\mbox{ at }F_{V_j00}; A_{\hvlf_k}+B_{\hvrf_k}+1\mbox{ at }F_{0V_k0}; \\ 
				B_{\hvlf_l}+1\mbox{ at }F_{00V_l};  A_{\ff_j}+B_{\hvrf_j}+5\mbox{ at }F_{RV_jV_j0}; \\ 
				A_{\sff_j}+B_{j0}+4 \mbox{ at }F_{RE_jE_j0}; A_{\hvlf_j}+B_{\ff_j}+5\mbox{ at }F_{L0V_jV_j}; \\ 
				A_{0j}+B_{\sff_j}+4 \mbox{ at }F_{L0E_jE_j}; A_{\td}+3\mbox{ at }F_{TRD};  \\
				\mbox{and }\infty\mbox{ at }F_{TL}, F_{TR}, F_{\mathcal O}, F_{\mathcal OV_j0V_j}, \mbox{ and } F_{\mathcal O0E_jE_j}. 
			\end{gathered}
		\end{equation}
		
		Now we apply the pushforward theorem and push forward by $\Pi_C$. There is a condition in the pushforward theorem, see [\cite{hmm}, Theorem 2.3] or \cite{etdeo1}:  any face which is mapped to the interior must have index set greater than $-1$ (or, equivalently, $0$ as a b-density). This, however, is guaranteed by \eqref{eq:integrabilitycondition}. By the pushforward theorem, $(\Pi_C)_*((\Pi_L^*K_A)(\Pi_R^*K_B))$ is polyhomogeneous on $M^2_{rh}$ with index sets given as a section of $\nu(M^2_{rh})$ by
		
		\begin{equation*} \begin{gathered} 
				A_{\ff_j}+B_{\ff_j}+9 \mbox{ at }\ff_j; \ A_{\sff_j}+B_{\sff_j}+8 \mbox{ at }\sff_j;\ \infty\mbox{ at } \tf; \\
				(\overline\cup_k(A_{\hvff_{jk}}+B_{\hvff_{kl}}+5))\overline\cup(A_{\hvrf_j}+B_{\hvlf_j}+3)\mbox{ at }\hvff_{jl};\\
				(\overline\cup_k(A_{\hvff_{jk}}+B_{\hvrf_k}+3))\overline\cup(A_{\hvrf_j}+1)\overline\cup(A_{\ff_j}+B_{\hvrf_j}+5)\mbox{ at }\hvrf_j;\\
				(\overline\cup_k(A_{\hvlf_k}+B_{\hvff_{kl}}+3))\overline\cup(B_{\hvlf_l}+1)\overline\cup(A_{\hvlf_l}+B_{\ff_l}+5) \mbox{ at }\hvlf_l;\\
				A_{j0}\overline\cup(A_{\sff_j}+B_{j0}+4)\mbox{ at }E_{j0};\ B_{0j}\overline\cup(A_{0j}+B_{\sff_j}+4)\mbox{ at }E_{0j}.
			\end{gathered} 
		\end{equation*}
		
		Finally, we use Proposition \ref{prop:densityblowup} to go back from sections of $\nu(M^2_{rh})$ to sections of $\nu(\Omega\times\Omega\times[0,1))$, then divide by $2T$ to go back to $dt dz dz''$. The index sets of the composition are thus
		\begin{equation*}
			\begin{gathered}  \mbox A_{\ff_j}+B_{\ff_j}+4 \mbox{ at }\ff_j;\ A_{\sff_j}+B_{\sff_j}+4 \mbox{ at }\sff_j; \ \infty\mbox{ at }\tf \mbox{ and } \td;\\
				(\overline\cup_k(A_{\hvff_{jk}}+B_{\hvff_{kl}}+2))\overline\cup(A_{\hvrf_j}+B_{\hvlf_j})\mbox{ at }\hvff_{jl};\\
				(\overline\cup_k(A_{\hvff_{jk}}+B_{\hvrf_k}+2))\overline\cup(A_{\hvrf_j})\overline\cup(A_{\ff_j}+B_{\hvrf_j}+4)\mbox{ at }\hvrf_j;\\
				(\overline\cup_k(A_{\hvlf_k}+B_{\hvff_{kl}}+2))\overline\cup(B_{\hvlf_l})\overline\cup(A_{\hvlf_l}+B_{\ff_l}+4) \mbox{ at }\hvlf_l;\\
				A_{j0}\overline\cup(A_{\sff_j}+B_{j0}+4)\mbox{ at }E_{j0};\ B_{0j}\overline\cup(A_{0j}+B_{\sff_j}+4)\mbox{ at }E_{0j}.
			\end{gathered} 
		\end{equation*}
	\end{proof}
	
	\begin{remark} 
		It is instructive to compare our composition formula to that of Mazzeo-Vertman \cite[Theorem 5.3]{MaVe}. Indeed, the two settings coincide in the special case of a surface with boundary and no vertices. Our faces ff, hvff, hvrf, and hvlf do not exist in that case. Moreover, in the notation of Mazzeo-Vertman, $\ell=A_{\sff_j}+4$ and $\ell'=B_{\sff_j}+4$ (see Definition 3.1, and note that the dimension of the base $b=1$), so our calculations are in agreement.
	\end{remark}

	\section{The heat kernel on a surface with boundary} \label{halfspace3} 
	In this section we will build the heat kernel for a surface with smooth boundary in the absence of conical singularities.  This construction has been performed in the Neumann and Dirichlet settings by Grieser \cite{grieser}; the Robin case we give here is new.  The work we do here shall be used in the later construction of the heat kernel for a surface with corners.
	We follow the usual geometric microlocal strategy. Namely, we specify models at various boundary hypersurfaces to which $\{T=0\}$ lifts in the heat space, then look for a pc function which has those models as its leading order behavior at each boundary hypersurface. In order for this method to work the models must be compatible with each other at the surfaces where they intersect, in the sense that their restrictions to the intersection must be identical, as otherwise no pc function with the specified leading order behavior can exist. 
	
	This setting is a special case of the setting of surfaces with corners, and as such the double heat space will be a special case of the space $M_h^2$ constructed in the previous section. Many of the blow-ups are now trivial; in this setting, we need just two blowups, the first of which is
	$$[ \Omega \times \Omega \times [0, \infty); \{ (z, z, 0) : z \in \pa \Omega \}].$$
	The new boundary face is sf, and in the absence of corners, the resulting space is the reduced heat space $M^2_{rh}$.  To complete the construction, we perform one more blowup, 
	$$[ M^2 _{rh} ; \{ (z, z, 0) : z \in \Omega \}] = M^2 _h.$$
	The resulting blown-up face is td.  Thus, the heat space for a surface with smooth boundary has boundary faces sf, td, tf, as well as the two side faces $E_{10}$ and $E_{01}$, which are the lifts of $\pa \Omega \times \Omega \times [0, \infty)$ and $\Omega \times \pa \Omega \times [0, \infty)$, respectively. The boundary hypersurfaces which comprise the lift of $\{T=0\}$ to $M^2 _h$ are therefore sf, td, and tf.

	\begin{figure} 
		\begin{tabular}{|c|c|c|c|} 
			\hline
			bdry hypersurface & sf & td & tf \\ 
			\hline
			model hk  & hk for half plane & hk for $\R^2$ & $\infty$ order vanishing \\ 
			\hline
			compatibility check & tf and td & sf and tf & sf and td \\ 
			\hline
		\end{tabular}
		\caption{The model heat kernels (abbreviated hk) and the boundary faces which intersect are given above.  Note that the heat kernel for the half plane is taken with the corresponding boundary condition:  Dirichlet, Neumann, or Robin.}  
	\end{figure} 
	
	\subsection{Heat kernels on the half-plane} 
	Let $(x,y)$ be the usual Cartesian coordinates on $\mathbb R^2$, and consider the half-space
	\begin{equation}
		\mathbb R^2_{+}:=\{(x,y)\in\mathbb R^2:y\geq 0\}.
	\end{equation}
	The heat kernel on all of $\mathbb R^2$ is
	\begin{equation}
		H_{\mathbb R^2}(t,x,y,x',y'):=\frac{1}{4\pi t}\exp\left[-\frac{(x-x')^2+(y-y')^2}{4t} \right].
	\end{equation}
	
	\subsubsection{The Neumann and Dirichlet heat kernels} \label{s:hkhalf} 
	By the method of images, 
	the Neumann heat kernel on $\mathbb R^2_+$ is
	\begin{equation} \label{eq:neumannhalfspace}
		\begin{gathered}  
			H_{\mathbb R^2}(t,x,y,x',y')+H_{\mathbb R^2}(t,x,y,x',-y') =  \\
			\frac{1}{4\pi t}\exp\left[-\frac{(x-x')^2}{4t}\right]\left(\exp\left[-\frac{(y-y')^2}{4t}\right]+\exp\left[\frac{-(y+y')^2}{4t}\right]\right).  
		\end{gathered} 
	\end{equation}
	The first term above is known as the direct term, whereas the second term is known as the reflected term or image term. We examine the behavior of this Neumann heat kernel on the double heat space $M_h^2$, albeit in the simpler setting where there are no corners. Although we will not prove that this heat kernel is pc on the double space in this section (it is a consequence of later work), we will use our examination to determine the appropriate pc models in the next section.
	
	To examine the model heat kernels for the half-space, let us examine the blow-ups in local coordinates. To get $M_h^2$, we first blow up
	\[\{T=y=y'=0; x=x'\},\]
	then blow up the lift of the diagonal at $T=0$:
	\[\{T=0; y=y'; x=x'\}.\]
	After the first blow-up, coordinates near the interior of the new face sf, away from the intersection with $\{T=0\}$ (where the second blow-up takes place), are given by
	\begin{equation}\label{eq:modelcoordsintsf}
		X:=\frac{x-x'}{T};\ \xi:=\frac{y}{T};\ \xi'=\frac{y'}{T};\ x';\mbox{ and }T.
	\end{equation}
	In these coordinates, the expression \eqref{eq:neumannhalfspace} becomes
	\begin{equation}\label{eq:neumannsfinterior}
		T^{-2}\frac{1}{4\pi}\exp\left[-\frac 14X^2 \right]\left(\exp\left[-\frac 14(\xi-\xi')^2\right]+\exp\left[-\frac 14(\xi+\xi')^2\right]\right).
	\end{equation}
	To encode this, we write
	\begin{align}\label{eq:modelminus2sfN} 
		&\mathcal H_{-2,\sff,N}:= \\ 
		& \frac{1}{4\pi}\exp\left[-\frac 14X^2\right]\left(\exp\left[-\frac 14(\xi-\xi')^2\right]+\exp\left[-\frac 14(\xi+\xi')^2\right]\right) \nonumber 
	\end{align} 
	
	and say that $\mathcal H_{-2,\sff,N}$, which we view as a function on sf, is the leading order model of the Neumann heat kernel at the face sf, appearing at order $T^{-2}$. What this means is that the Neumann heat kernel, in a coordinate patch near the interior of sf, is given at least to leading order (in this case, exactly) by
	\[T^{-2}\mathcal H_{-2,\sff,N}.\]
	
	After the second blow-up, coordinates near the interior of the new face td, away from $y=0$, are given by
	\begin{equation}\label{eq:modelcoordsinttd}
		X=\frac{x-x'}{T};\ Y:=\frac{y-y'}{T};\ x';\ y';\mbox{ and }T.
	\end{equation}
	Consider \eqref{eq:neumannhalfspace} in these coordinates. Away from $y=0$ the image term is $O(T^{\infty})$, and so \eqref{eq:neumannhalfspace} becomes
	\begin{equation}\label{eq:tdinterior}
		T^{-2}\frac{1}{4\pi}\exp\left[-\frac 14X^2\right]\exp\left[-\frac 14Y^2\right]+ O(T^{\infty}).
	\end{equation}
	So we may similarly define
	\begin{equation}\label{eq:modelminus2td}
		\mathcal H_{-2,\td}:=\frac{1}{4\pi}\exp\left[-\frac 14X^2\right]\exp\left[-\frac 14Y^2\right],
	\end{equation}
	where we have omitted the $N$ since the model will be the same for all boundary conditions. The leading order of the Neumann heat kernel at td is given by
	\[T^{-2}\mathcal H_{-2,\td}.\]
	
	To check for compatibility, we show that our model for the Neumann heat kernel on $\mathbb R^2_{+}$ is compatible with the model we have defined at td. Specifically, we want
	\[\mathcal H_{-2,\sff,N}|_{\sff\cap \td}=\mathcal H_{-2,\td}|_{\sff\cap \td}.\]
	The coordinate patches we have described to this point are not necessarily valid systems of coordinates near the intersection sf$\cap$td. However, it is easy enough to show that
	\begin{equation}\label{eq:modelcoordssfandtd}
		\eta:=\frac{T}{y'}=\frac{1}{\xi'};\ X;\ Y;\ x';\ y'
	\end{equation}
	are valid coordinates in a neighborhood of this intersection, away from tf. In this new coordinate patch,
	\[y=YT+y',\mbox{ so }\xi=Y+\frac{1}{\eta},\mbox{ so }\xi+\xi'=Y+\frac{2}{\eta},\]
	and we get
	\[\mathcal H_{-2,\sff,N}=\frac{1}{4\pi}\exp\left[-\frac 14X^2\right]\left(\exp\left[-\frac 14Y^2\right]+\exp\left[-\frac 14(Y+\frac{2}{\eta})^2\right]\right);\]
	\[\mathcal H_{-2,\td}=\frac{1}{4\pi}\exp\left[-\frac 14X^2\right]\exp\left[-\frac 14Y^2\right].\]
	Restricting to sf$\cap$td means letting $\eta \to 0$ in the first term, which corresponds to approaching sf$\cap$td from the interior of sf, and letting $y' \to 0$ in the second term, which corresponds to approaching sf$\cap$td from the interior of td.  We see immediately that the second exponential in $\mathcal H_{-2,\sff,N}$ tends to zero when $\eta$ tends to zero, and $H_{-2,\td}$ is independent of $y'$, so the restrictions are well-defined and they match. This proves compatibility of $\mathcal H_{-2,\sff,N}$ and $\mathcal H_{-2,\td}$.
	
	An identical analysis works for the Dirichlet heat kernel; the only difference is the sign of the image term. So we write
	\begin{align}\label{eq:modelminus2sfD}
		&\mathcal H_{-2,\sff,D} := \\ 
		&\frac{1}{4\pi}\exp\left[-\frac 14X^2\right]\left(\exp\left[-\frac 14(\xi-\xi')^2\right]-\exp\left[-\frac 14(\xi+\xi')^2\right]\right),  \nonumber 
	\end{align}
	and just as before, $\mathcal H_{-2,\sff,D}$ is compatible with $\mathcal H_{-2,\td}$.
	
	Note that the models $\mathcal H_{-2,\sff,N}$ and $\mathcal H_{-2,\sff,D}$ themselves satisfy Neumann or Dirichlet boundary conditions, respectively. In particular, looking at the expansion of $\mathcal H_{-2,\sff,N}$ in $\xi$ at $\{\xi=0\}$ (i.e. $y=0$), we observe that there is a complete Taylor expansion with no first-order term. If we take $\frac{\partial}{\partial \xi}\mathcal H_{-2,\sff,N}$ and restrict to $\{\xi=0\}$, we get zero. Similarly, $\mathcal H_{-2,\sff,D}$ has a Taylor expansion with no zeroth-order term; that is, its restriction to $\{\xi=0\}$ is zero.
	
	It is also useful to consider the heat operator, lifted from the left (that is, acting in the unprimed coordinates):
	\[\mathcal L:=\partial_t-\partial_{xx}-\partial_{yy}=\frac{1}{2T}\partial_T-\partial_{xx}-\partial_{yy}.\]
	This operator lifts under the blow-down maps to an operator on the double heat space, which we also call $\mathcal L$, abusing notation. The lift of the operator $t\mathcal L=T^2\mathcal L$ is more useful, because $T^2 \cL$ lifts to the double heat space to be a b-operator, except in a neighborhood of $E_{10}$, where it is merely smooth. In particular $\rho_{E_{10}}^2T^2\cL$ is a b-operator. Therefore $T^2\cL$ (1) preserves polyhomogeneity and (2) preserves infinite order vanishing at tf. These two facts shall be useful.

	The operator $T^2\cL$ can be analyzed in the coordinate systems \eqref{eq:modelcoordsintsf}, \eqref{eq:modelcoordsinttd}, and \eqref{eq:modelcoordssfandtd}. In the coordinates \eqref{eq:modelcoordsintsf} we have by a chain rule calculation:
	\begin{equation}\label{eq:liftintsf}
		T^2\mathcal L=\frac 12T\partial_{T}-\partial_{XX}-\frac 12X\partial_X-\partial_{\xi\xi}-\frac 12\xi\partial_{\xi} - \frac 1 2 \xi' \pa_{\xi'}.
	\end{equation}
	In the coordinates \eqref{eq:modelcoordsinttd}, we have
	\begin{equation}\label{eq:liftinttd}
		T^2\mathcal L=\frac 12T\partial_{T}-\partial_{XX}-\frac 12X\partial_X-\partial_{YY}-\frac 12Y\partial_Y.
	\end{equation}
	Finally, in the coordinates \eqref{eq:modelcoordssfandtd}, we get
	\begin{equation}\label{eq:liftsfandtd}
		T^2\mathcal L=\frac 12\eta\partial_{\eta}-\partial_{XX}-\frac 12X\partial_X-\partial_{YY}-\frac 12Y\partial_Y.
	\end{equation}
	The point of all of this is to show that our leading order models solve model problems at their designated boundary hypersurfaces.  
	Specifically, since the heat kernel solves the heat equation, we must have
	\begin{eqnarray}\left .\beta^* (T^2 \cL) \beta^*(T^{-2} \cH_{-2, \td}) \right|_{\td} &=& \left . \beta^* (T^2 \cL) \beta^*(T^{-2} \cH_{-2, \sff, N}) \right|_{\sff} \nonumber \\ 
		&=& \left . \beta^* (T^2 \cL) \beta^*(T^{-2} \cH_{-2, \sff, D}) \right|_{\sff} = 0. \label{eq:halfspacemodelproblemswork0}  \end{eqnarray}
	Observe additionally that when lifting to the double heat space, in the coordinate systems near these boundary faces, the three factors of $\cH$ are independent of the $T$ coordinate.  Consequently,
	$$\frac 1 2 T \pa_T (T^{-2} \cH) = - T^{-2} \cH,$$ 
	and so 
	\begin{eqnarray}\label{eq:halfspacemodelproblemswork}
		(T^2\mathcal L-\Id)|_{\sff}\mathcal H_{-2,\sff,N} &=& (T^2\mathcal L-\Id)|_{\sff}\mathcal H_{-2,\sff,D} \\ 
		&=& (T^2\mathcal L-\Id)|_{\td}\mathcal H_{-2,\td}=0. \nonumber 
	\end{eqnarray}
	
	\begin{remark} The Neumann and Dirichlet heat kernels for a half-space are indeed pc on $M_h^2$. This can be seen directly in local coordinates, and alternately follows from a reflection argument similar to the proof of Lemma \ref{lem:yesitspc}.
	\end{remark}
	
	\subsubsection{The Robin heat kernel}
	Now we consider the heat kernel on $\mathbb R^2_{+}$ with a Robin boundary condition, namely
	\begin{equation} \label{eq:robinbc}  \left . \frac{\partial u(x,y)}{\partial y}\right|_{y=0}=\kappa u(x,0),\mbox{ for some constant }\kappa>0. \end{equation} 
	We recall that this condition is for the \em inward \em pointing normal derivative.  The explicit expression for this heat kernel is known \cite{bondfull}. It is
	\[H_{\mathbb R_+^2,\textrm{Robin}}:=H_{\mathbb R_{+}^2,\textrm{Neumann}}+H_{corr},\]
	where
	\[H_{corr}(t,x,y,x',y'):=-\frac{\kappa e^{\kappa(y+y')}e^{\kappa^2 t}}{\sqrt{4\pi t}}\exp\left[-\frac{1}{4t}(x-x')^2\right]\mbox{erfc}\left(\frac{y+y'}{\sqrt{4t}}+\kappa\sqrt t\right).\]
	Recall that the complementary error function is smooth in $z$, bounded by $1$ for $z\geq 0$, and decaying to infinite order as $z\to\infty$:
	$$\erfc(z) = 1 - \erf(z) = \frac{2}{\sqrt \pi} \int_z ^\infty e^{-s^2}ds.$$
	
	Let us examine the behavior of $H_{corr}$ in the coordinate systems \eqref{eq:modelcoordsintsf} and \eqref{eq:modelcoordssfandtd}. In \eqref{eq:modelcoordsintsf}:
	\[H_{corr}=-T^{-1}\frac{\kappa e^{\kappa T(\xi+\xi')}e^{\kappa^2 T^2}}{2\sqrt{\pi}}\exp\left[-\frac 14X^2\right]\mbox{erfc}\left(\frac 12(\xi+\xi')+\kappa T\right).\]
	The restriction of $TH_{corr}$ to sf, that is to $T=0$, is well-defined. Based on our previous notation, we give it a name:
	\begin{equation}\label{eq:modelminus1sfR}
		\mathcal H_{-1,\sff,R}:=-\frac{\kappa}{2\sqrt{\pi}}\exp\left[-\frac 14X^2\right]\mbox{erfc}\left(\frac 12(\xi+\xi')\right).
	\end{equation}
	On the other hand, in the coordinate system \eqref{eq:modelcoordssfandtd} that is valid near the intersection of sf and td, we have
	\[H_{corr}=-T^{-1}\frac{\kappa e^{\kappa y'(\eta Y+2)}e^{\kappa^2\eta^2(y')^2}}{2\sqrt{\pi}}\exp\left[-\frac 14X^2\right]\mbox{erfc}\left(\frac 12Y+\frac 1{\eta}+\kappa\eta y' \right).\]
	We approach td$\cap$sf from the interior of sf by letting $\eta \to 0$. As this happens, the erfc function decays to infinite order due to the $1/\eta$ term in its argument, and we see that $H_{corr}$ vanishes to infinite order at td$\cap$sf. Thus $\cH_{-1,\sff,R}$ vanishes to infinite order at td, and so adding $H_{corr}$ to the Neumann heat kernel does not disrupt compatibility at td$\cap$sf.
	
	The Robin correction term also solves a model problem at sf. The model problem in this case is slightly different, because $\cH$ has a factor of $T^{-1}$ rather than $T^{-2}$.  Consequently, 
	$$\frac 1 2 T \pa_T (T^{-1} \cH) = - \frac 1 2 T^{-1} \cH,$$ 
	so here our model problem is 
	\begin{equation}\label{eq:minus1sfRmodelworks}
		\left . \left(t\mathcal L-\frac 12\Id\right)\right|_{\sff}\mathcal H_{-1,\sff,R}=0.
	\end{equation}
	For the sake of completeness we include this calculation.  First, we compute (dropping the subscripts for notational simplicity) 
	$$\pa_X \cH = \frac{\kappa X}{4\sqrt \pi} e^{-X^2/4} \erfc \left( \frac{\xi + \xi'}{2} \right) \implies - \frac{X}{2} \pa_X \cH = \frac{X^2}{4} \cH.$$
	Similarly we compute 
	$$\pa_{XX} \cH = \frac{\kappa}{4\sqrt \pi} e^{-X^2/4} \erfc \left( \frac{\xi + \xi'}{2} \right) - \frac{\kappa X^2}{8\sqrt \pi} e^{-X^2/4} \erfc \left( \frac{\xi + \xi'}{2} \right),$$
	thus 
	$$-\pa_{XX} \cH = \frac 1 2 \cH - \frac{X^2}{4} \cH, \quad - \frac{X}{2} \pa_X \cH - \pa_{XX} \cH = \frac 1 2 \cH.$$
	Noting that $\erfc'(z) = - e^{-z^2}$, we also compute 
	$$\pa_\xi \cH = \frac{\kappa}{4 \sqrt \pi} e^{-X^2/4} e^{-(\xi+\xi')^2/4}, \quad - \frac{\xi}{2} \pa_\xi \cH = \frac{-\xi \kappa}{8\sqrt \pi} e^{-X^2/4} e^{-(\xi+\xi')^2/4}.$$
	By the symmetry in $\xi$ and $\xi'$, we also have 
	$$- \frac{\xi'}{2} \pa_{\xi'} \cH = \frac{-\xi' \kappa}{8\sqrt \pi} e^{-X^2/4} e^{-(\xi+\xi')^2/4}.$$
	Finally, we compute 
	$$\pa_{\xi \xi} \cH = - \frac{\kappa (\xi + \xi')}{8 \sqrt \pi} e^{-X^2/4} e^{-(\xi+\xi')^2/4}, \quad - \pa_{\xi \xi} \cH = \frac{\kappa (\xi + \xi')}{8 \sqrt \pi} e^{-X^2/4} e^{-(\xi+\xi')^2/4}.$$
	In total, we therefore have 
	\begin{equation*} \begin{gathered} 
			T^2\cL \cH = \frac 1 2 \cH + \frac{-\xi \kappa}{8\sqrt \pi} e^{-X^2/4} e^{-(\xi+\xi')^2/4} + \frac{-\xi' \kappa}{8\sqrt \pi} e^{-X^2/4} e^{-(\xi+\xi')^2/4} \\ + \frac{\kappa (\xi + \xi')}{8 \sqrt \pi} e^{-X^2/4} e^{-(\xi+\xi')^2/4}= \frac 1 2 \cH,
	\end{gathered} \end{equation*}
	verifying \eqref{eq:minus1sfRmodelworks}. 
	
	The upshot of all of this is that the Robin heat kernel on a half-space may be seen as a correction of the Neumann heat kernel, where the correction is lower order in the sense that it appears in the asymptotic behavior half an order below the leading order in the asymptotic regime corresponding to sf, namely at $T^{-1}$ rather than $T^{-2}$. The correction also vanishes to infinite order at the $T=0$ diagonal in the interior, indicating that it has no effect on the interior heat asymptotics. So the Robin heat kernel on a half space has:
	\begin{itemize}
		\item Leading order behavior at td of order $T^{-2}$ given by $\mathcal H_{-2,\td}$, in particular the same as the Neumann and Dirichlet heat kernels;
		\item Leading order behavior at sf of order $T^{-2}$ given by $\mathcal H_{-2,\sff,N}$, in particular the same as the Neumann heat kernel; and
		\item Additional sub-leading order behavior at sf of order $T^{-1}$ given by $\mathcal H_{-1,\sff,R}$ (and some subsequent terms at higher powers of $T$).  However, there is no additional sub-leading order behavior at td.
	\end{itemize}
	
	\subsection{Construction of the heat kernel on a surface with boundary} \label{s:mwb} 
	We are now poised to construct the Dirichlet, Neumann, and Robin heat kernels on a surface with boundary,  prove they are pc on the double heat space, and identify their leading order terms in their pc expansion. Our construction is inspired by \cite{MaVe},  \cite{grieser} and indeed has already been done in \cite{MaVe} for Dirichlet boundary conditions (which are the Friedrichs extension for a cone-edge structure with one-dimensional edge and zero-dimensional link).  These references were both largely inspired and guided by \cite{tapsit}.  
	
	We use the same double space $M_h^2$ that we have been using, which in the boundary-only case for surfaces is the same as the double space of \cite{MaVe}, with a one-dimensional edge and a zero-dimensional cone link. Note that the faces have the same names, with the exception of our $E_{10}$ which corresponds to rf in \cite{MaVe} and our $E_{01}$ which corresponds to lf. Our composition formula, Theorem \ref{thm:comp}, agrees with that of \cite{MaVe} as well.
	
	Throughout, we will use boundary normal coordinates $(x,y)$ on our surface with boundary $\Omega$. In these coordinates, the boundary is defined by $y=0$. The Riemannian metric in these boundary normal coordinates near the boundary takes the form 
	$$g(x,y)dx^2+dy^2,$$ 
	with $g(0,y)=1$, and $g(x,y)$ smooth in $x$ and $y$. The Laplacian has the following expression:
	\[\Delta:=-\partial_{xx}-\partial_{yy}+a_1(x,y)\partial_{xx}+a_2(x,y)\partial_x+a_3(x,y)\partial_y,\]
	where $a_1$, $a_2$, and $a_3$ are smooth, with $a_1(x,0)=0$ and $a_2(x,0)=0$.  In the interior, we let $z$ be a local coordinate patch on $M$; then let $Z=(z-z')/T$ and use $(T,Z,z')$ near the interior of td.
	
	\subsubsection{The Dirichlet and Neumann heat kernels}
	Since the heat space here only has boundary faces td, sf, tf, and the side faces, for an index family $\mathcal F=(F_{\td},F_{\sff},F_{E10},F_{E01})$, define $\mathcal A^{\mathcal F}_{h}$ to be the space of kernels in $\mathcal A^{\mathcal F}_{h}(\Omega^2_h)$, as functions of $(T,z,z')$, which vanish to infinite order at tf. Similarly, we define $\Psi^{a,b,c,d}$ to be the set of operators whose kernels are in $\mathcal A ^{\mathcal F} _h$ for some index family $\mathcal F$ which has leading orders $a$, $b$, $c$, $d$, at the corresponding faces.
	
	Let us begin with the Neumann heat kernel. We follow \cite{MaVe}. We construct first a parametrix:
	\begin{proposition} There exists an element of $\Psi^{-2,-2,0,0}$ whose Schwartz kernel 
		$H^{(1)}$ satisfies Neumann boundary conditions in the left (unprimed) variable, whose limit as $T \to 0$ is $\delta(z-z')$, and with
		\[T^2\mathcal LH^{(1)}\in\mathcal A_h^{\infty,-1,0,0}.\]
	\end{proposition}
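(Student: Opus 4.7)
The plan is to construct $H^{(1)}$ by patching together, via cutoff functions in the spatial variable, two local parametrices built from the explicit models already analyzed in Section~4. For the piece near the interior of td I would use the classical Minakshisundaram--Pleijel construction: start with the Euclidean heat kernel model $\mathcal H_{-2,td}$ frozen at $z=z'$, and add a formal series of corrections $\sum_{k\geq 1}t^{k}\phi_k(z,z')\mathcal H_{-2,td}$ with $\phi_k$ chosen inductively so that applying $t\mathcal L$ kills the partial sum to ever-higher order at td. Borel summation yields a smooth $H^{(1)}_{\text{int}}$ which is pc on $\Omega_h^2$, has leading order $-2$ at td, vanishes to infinite order at tf, is smooth (order $0$) at the side faces, and for which $t\mathcal L H^{(1)}_{\text{int}}$ vanishes to infinite order at td.

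For the piece near sf I would work in the boundary normal coordinates $(x,y)$ with metric $g(x,y)\,dx^2+dy^2$. Take $H^{(1)}_{\text{bdy}}$ to be the half-space Neumann heat kernel in the variables $(x-x',y,y',t)$ with coefficients frozen at $(x',0)$. By the explicit calculations in Section~4, $H^{(1)}_{\text{bdy}}$ lifts to $\Omega_h^2$ with leading terms $\mathcal H_{-2,sf,N}$ at sf and $\mathcal H_{-2,td}$ at td; it satisfies the Neumann condition exactly on $E_{10}$; and the model-problem identities \eqref{eq:halfspacemodelproblemswork} imply that those leading terms cancel when $t\mathcal L$ is applied. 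The remaining error comes from replacing the true Laplacian by its frozen version, and the hypotheses $g(0,y)=1$, $a_1(x,0)=a_2(x,0)=0$ guarantee that this error carries one extra factor of $\sqrt t$ in the projective coordinates $(X,\xi,\xi')$ valid near sf. Hence $t\mathcal L H^{(1)}_{\text{bdy}}\in\mathcal A_h^{\infty,-1,0,0}$.

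I would then glue by setting $H^{(1)}=\chi H^{(1)}_{\text{bdy}}+(1-\chi)H^{(1)}_{\text{int}}$ where $\chi$ is a smooth cutoff that equals $1$ in a small collar of $\partial\Omega$ and is supported where boundary normal coordinates are valid. Lifted to $\Omega_h^2$ this cutoff is independent of $t$ and is supported away from tf, so it does not disturb any of the boundary orders. The compatibility identity $\mathcal H_{-2,sf,N}|_{sf\cap td}=\mathcal H_{-2,td}|_{sf\cap td}$ from Section~4 ensures that $H^{(1)}_{\text{bdy}}-H^{(1)}_{\text{int}}$ vanishes to leading order at both sf and td, so the commutator $[t\mathcal L,\chi](H^{(1)}_{\text{int}}-H^{(1)}_{\text{bdy}})$, which is supported in the interior of $\Omega^2$ at positive distance from $\partial\Omega$, is smooth and vanishes to infinite order at sf, td, and tf. Adding everything up gives $H^{(1)}\in\Psi^{-2,-2,0,0}$ with $t\mathcal L H^{(1)}\in\mathcal A_h^{\infty,-1,0,0}$ and Neumann boundary data on the left inherited from $H^{(1)}_{\text{bdy}}$. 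The initial condition $\lim_{t\to 0}H^{(1)}=\delta(z-z')$ is automatic since each of the two local pieces has this property.

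The main obstacle will be the variable-coefficient bookkeeping at sf: one must Taylor-expand $g$, $a_1$, $a_2$, $a_3$ around the basepoint $(x',0)$ and track precisely how each term contributes in the coordinates $X=(x-x')/\sqrt t$, $\xi=y/\sqrt t$, $\xi'=y'/\sqrt t$. The vanishing statements $g(0,y)=1$, $a_1(x,0)=0$, $a_2(x,0)=0$ are exactly what is needed to upgrade the error at sf from the naive order $-2$ to the desired order $-1$. A secondary, purely bookkeeping task is to verify that $H^{(1)}_{\text{bdy}}$ really is polyhomogeneous on $\Omega_h^2$ with index sets as claimed; this follows from the explicit formula for the half-space Neumann heat kernel together with the reflection argument outlined in the remark at the end of Section~4.
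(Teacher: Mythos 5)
Your gluing strategy is a more concrete alternative to the paper's abstract ``prescribe compatible models and cite existence of a pc kernel'' construction, but there is a genuine gap, concentrated at td. The claim $t\mathcal L H^{(1)}_{\text{bdy}}\in\mathcal A_h^{\infty,-1,0,0}$ (first slot being td) is not correct: the frozen-coefficient half-space Neumann heat kernel solves the td model problem only to leading order, not to all orders, so applying the true $t\mathcal L$ to it cannot produce infinite-order vanishing at td. The extra factor of $\sqrt t$ you correctly identify from $a_1(x,0)=a_2(x,0)=0$ and the substitution $y=\xi T$ lives only in the projective sf-coordinates $(X,\xi,\xi')$. Near the interior of td, using $X=(x-x')/T$, $Y=(y-y')/T$ with $y'>0$ fixed, the freezing error lifts to
\[
t\bigl(\mathcal L-\mathcal L_{\mathrm{frozen}}\bigr)H^{(1)}_{\text{bdy}}
=T^{-2}\Bigl(a_1(x',y')\,\partial_{XX}\mathcal H_{-2,td}+O(T)\Bigr),
\]
and $a_1(x',y')$ is generically nonzero for $y'>0$. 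So $t\mathcal L H^{(1)}_{\text{bdy}}$ has leading order $-2$, not $\infty$, at td; its coefficient vanishes as $y'\to 0$, which accounts for the gain to $-1$ at sf but yields no improvement at td itself. Since your cutoff $\chi$ equals one exactly in the collar where $y'$ is small but positive, the glued kernel inherits this order $-2$ defect along the portion of td adjacent to sf, and the required conclusion fails. A secondary problem is the commutator term: $H^{(1)}_{\text{int}}-H^{(1)}_{\text{bdy}}$ has only finite order at td (the MP corrections and the frozen kernel differ at order $T^0$), so $[t\mathcal L,\chi]$ applied to it does not vanish to infinite order there either.

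The paper gets the infinite order at td by never relying on a frozen kernel: it imposes the \emph{full} td-ansatz $\sum_{j\geq 0}T^{-2+j}\mathcal H_{-2+j,td}$, solving the transport hierarchy to all orders uniformly down to sf$\,\cap\,$td, simultaneously with the prescribed leading model $T^{-2}\mathcal H_{-2,sf,N}$ at sf; the compatibility of these two models is precisely what guarantees the existence of a pc kernel achieving both, and the absence of a first-order $\xi$-term in $\mathcal H_{-2,sf,N}$ is what allows one to pick that kernel to satisfy the Neumann condition. To salvage your gluing argument you would need the boundary piece to solve both the half-space Neumann problem at sf and the td transport hierarchy to all orders in the collar, which amounts to carrying out the same iterative td construction the paper uses rather than substituting a frozen constant-coefficient kernel.
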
 
	\begin{proof} The idea is to solve our model problems to infinite order at td and first order at sf, and to do so in a way that satisfies Neumann boundary conditions.
		
		In the interior of td, we use the ansatz
		\begin{equation}\label{eq:ansatzattd}
			H^{(1)}(T,Z,z')\sim\sum_{j=0}^{\infty}T^{-2+j}\mathcal H_{-2+j,\td}(Z,z').
		\end{equation}
		As in \cite{MaVe}, we formally apply $t\mathcal L$ to this expansion and set the result equal to zero.  We can solve, inductively, for each coefficient function $\mathcal H_{-2+j,td}$. For example, the equation for $j=0$ is
		\[-T^{-2}\mathcal H_{-2,\td}-T^{-2}\left (\partial_{ZZ}+\frac 12Z\partial_Z \right)\mathcal H_{-2,\td}=0.\]
		By direct computation \eqref{eq:halfspacemodelproblemswork} letting  $\mathcal H_{-2,\td}$ be the expression \eqref{eq:modelminus2td}, namely
		\[\mathcal H_{-2,\td}(Z,z')=\frac{1}{4\pi}e^{-\frac 14Z^2},\] 
		it solves this equation for $j=0$. For the higher order terms, we have to expand $T^2\mathcal L$ in a power series in $T$, and more terms than just its restriction to td will be involved. Nevertheless, one may show inductively that there exist terms $\mathcal H_{-2+j,\td}(Z,z')$ for all $j$ that satisfy the formal ansatz. These terms each decay rapidly in $Z$. Since $\Omega$ is a subset of a smooth manifold $M$ and this construction is uniform over the interior of $M$, all terms $\mathcal H_{-2+j,\td}$ are smooth up to sf$\cap$td. The result also satisfies the delta function initial condition. We omit the details, as they may be found in \cite{MaVe} and \cite{tapsit}, as well as other references. 
		
		We verified in \S \ref{s:hkhalf} that $\mathcal H_{-2,\sff,N}$, defined in \eqref{eq:modelminus2sfN}, is compatible with $\mathcal H_{-2,\td}$, so it is possible to choose an element of $\Psi^{-2,-2,0,0}$ whose kernel $H^{(1)}$ simultaneously has leading order $T^{-2}\mathcal H_{-2,\sff,N}$ at sf and has full expansion \eqref{eq:ansatzattd} at td, which vanishes to infinite order at tf, and which is smooth down to $E_{10}$ and $E_{01}$.  In fact it is also possible to choose $H^{(1)}$ to satisfy Neumann boundary conditions. To see this, examine the expansion of $\mathcal H_{-2,\sff,N}$ as we approach $E_{10}$ and $E_{01}$. Boundary defining functions for those faces are $\xi$ and $\xi'$ respectively. Indeed, \eqref{eq:modelminus2sfN} is smooth in $\xi$ and $\xi'$, and it has no order 1 term at $\xi=0$ or $\xi'=0$. We may thus choose $H^{(1)}$ so that there is no term of order 1 in its expansions at $E_{10}$ and $E_{01}$. This $H^{(1)}$ satisfies Neumann boundary conditions, as claimed.
		
		It remains to show that 
		\[T^2\mathcal LH^{(1)}\in\mathcal A_h^{\infty,-1,0,0}.\]
		First we have to show that $T^2\mathcal LH^{(1)}$ is polyhomogeneous. However, the operator $T^2 \cL$ lifts to one which is tangent to all boundary hypersurfaces except for $E_{10}$, and at $E_{10}$ it may be written as $\rho_{E_{10}}^{-2}$ times such an operator. Since such operators preserve polyhomogeneity and also preserve the infinite order vanishing at tf, the polyhomogeneity follows.
		
		Now we compute the leading orders.  Since $H^{(1)}$ has the full expansion \eqref{eq:ansatzattd}, which is annihilated by $t\mathcal L$, we see that $T^2\mathcal LH^{(1)}$ has order $\infty$ at td.
		
		At sf, we claim that the model problem is the same as for a half-space.  We have solved the model problem for a half-space to first order, so we get an improvement of one order, from $-2$ to $-1$. To see this, compute $t\mathcal L$ in the coordinates \eqref{eq:modelcoordsintsf}. We get
		\begin{multline}\label{eq:liftintsfbnc}
			T^2\mathcal L=\frac 12T\partial_{T}-\partial_{XX}-\frac 12X\partial_X-\partial_{\xi\xi}-\frac 12\xi\partial_{\xi} - \frac 1 2 \xi' \pa_{\xi'}\\
			+a_1(XT+x',\xi T)\partial_{XX}+Ta_2(XT+x',\xi T)\partial_X+Ta_3(XT+x',\xi T)\partial_{\xi}.
		\end{multline}
		We apply $T^2\mathcal L$ to our pc expansion at sf, namely
		\[T^{-2}\mathcal H_{-2,\sff,N}+T^{-1}\mathcal H_{-1,\sff,N}+\dots.\]
		Since $T^2\mathcal L$ is tangent to sf, the leading order of the result will be at worst $-2$. We claim it is actually $-1$. Indeed, as with the half-space, the application of the first six terms in \eqref{eq:liftintsfbnc} to $T^{-2}\mathcal H_{-2,\sff,N}$ yields zero by \eqref{eq:halfspacemodelproblemswork}. Moreover, applying $T^2\mathcal L$ to only the terms with order $T^{-1}$ or higher yields something of order at most $-1$. So the only possible term of order $-2$ in the expansion of $t\mathcal L H^{(1)}$ at sf comes from
		\begin{align*}& T^{-2}(a_1(XT+x',\xi T)\partial_{XX}+Ta_2(XT+x',\xi T)\partial_X \\ 
			&+Ta_3(XT+x',\xi T)\partial_{\xi})\mathcal H_{-2,\sff,N}.\end{align*}
		However, the coefficients of the last two terms vanish at sf to first order in $T$, and since $a_1(x',0)=0$ identically, so does the coefficient of the first term. In other words, the Laplacian in boundary normal coordinates is the same as that for a half-space up to terms which are lower order at sf. Thus $t\mathcal L H^{(1)}$ has order $-1$ at sf, as desired.
		
		At $E_{01}$, $T^2\mathcal L$ is tangent to $E_{01}$, so the leading order remains unchanged at 0.  Finally, at $E_{10}$, at any point in the interior of $E_{10}$, we can use the coordinates $(T,x,y,x',y')$, in which $y$ is the defining function for $E_{10}$. So $H^{(1)}$ has a smooth expansion in $y$ down to $y=0$, with smooth dependence on all other variables. Applying $T^2\mathcal L$ would usually turn a term of order $y^{\gamma}$ into a term of order $y^{\gamma-2}$, but since $H^{(1)}$ has a smooth expansion, it stays smooth. So the leading order of $T^2\mathcal LH^{(1)}$ at $E_{10}$ is 0, completing the proof.
	\end{proof}
	
	As in \cite{MaVe}, this can be improved at $E_{10}$, the analogue of rf:
	\begin{proposition}\label{prop:improvementatrf}There exists an element $H^{(2)}\in\mathcal A_h^{-2,-2,0,0}$ which satisfies Neumann boundary conditions in the left variable, with 
		\[\lim_{t\to 0}H^{(2)}=\delta(z-z') \quad \textrm{and} \quad T^2\mathcal LH^{(2)}\in\mathcal A_h^{\infty,-1,\infty,0}.\]
	\end{proposition}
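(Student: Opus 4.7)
The plan is to construct $H^{(2)} = H^{(1)} + K$, where the correction $K$ is designed to cancel the error $e := t\mathcal L H^{(1)} \in \mathcal A_h^{\infty, -1, 0, 0}$ to infinite order at $E_{10}$ without disturbing the good behavior already established at the other faces. The construction follows the standard geometric microlocal philosophy of iteratively solving model problems at the boundary hypersurface to be improved, much as in \cite{MaVe} and \cite{grieser}.

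First, near the interior of $E_{10}$ we use the coordinates $(\sqrt{t}, x, y, x', y')$ in which $y$ is a boundary defining function for $E_{10}$, and expand the error as a Taylor series in $y$:
\[
e \sim \sum_{k=0}^{\infty} y^{k}\, e_{k}(\sqrt t, x, x', y'),
\]
where each $e_k$ inherits pc behavior from $e$ at the remaining faces. The crucial observation is that the normal operator of $t\mathcal L$ at $E_{10}$ is the flat half-plane heat operator $t(\partial_t - \partial_{xx} - \partial_{yy})$ acting in the left $(x,y)$ variables, because $a_1(x',0) = 0$ and $a_2(x',0) = 0$ in boundary normal coordinates. This model problem can be inverted explicitly using the half-plane Neumann heat kernel from Section \ref{s:hkhalf}: for each $k$, the source term $y^k e_k$ is mapped, via convolution with the half-plane Neumann heat kernel in the left variables (with $(x',y')$ treated as parameters), to a function $K_k$ smooth in $y$ down to $y=0$, satisfying Neumann boundary conditions, vanishing to infinite order at td, and for which the formal series $K \sim \sum_{k\geq 0} y^k K_k$ satisfies $t\mathcal L K \equiv -e$ to infinite order at $E_{10}$. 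A Borel summation, together with a cutoff supported near $E_{10}$, produces a genuine polyhomogeneous function on $M_h^2$ with this Taylor expansion at $E_{10}$. Setting $H^{(2)} := H^{(1)} + K$, one then checks the four claims: membership in $\mathcal A_h^{-2,-2,0,0}$ (the orders at sf and td are unaffected since $K$ is lower order at sf and vanishes to infinite order at td), Neumann boundary conditions (by construction of each $K_k$), the delta-function initial condition (because $K$ vanishes to infinite order at td), and $t\mathcal L H^{(2)} \in \mathcal A_h^{\infty, -1, \infty, 0}$ (by the infinite-order cancellation at $E_{10}$).

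The main obstacle will be ensuring that the corrections $K_k$, constructed from the half-plane Neumann heat kernel in the interior regime of $E_{10}$, extend compatibly to the corner $\mathrm{sf} \cap E_{10}$ where the natural boundary defining function for $E_{10}$ is $\xi = y/\sqrt t$ rather than $y$ itself. Concretely, one must verify that convolution with $H_{\mathbb R^2_{+},N}$ preserves polyhomogeneity on $M_h^2$ with control of index sets at sf, and that the resulting model at sf matches the sf-leading order of $H^{(1)}$, which is itself built from $\mathcal H_{-2,\mathrm{sf},N}$. This compatibility follows because the two models — the half-plane Neumann heat kernel used to invert at $E_{10}$ and the half-plane model already placed at sf — come from the same half-plane problem, but the verification requires an explicit analysis in coordinates of the convolution near the corner, analogous to the compatibility check performed in Section \ref{s:hkhalf}.
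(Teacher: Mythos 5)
Your overall strategy—expand the error at $E_{10}$, solve away its Taylor coefficients term by term, Borel-sum, cut off near $E_{10}$, and verify that the correction does not disturb the expansions at td, sf, or the Neumann boundary condition—is indeed the right plan, and matches the paper's plan at a high level. However, the mechanism you use to solve the model problem at $E_{10}$ is not the one in the paper, and it is conceptually off in a way that would cause trouble if you tried to push it through.

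The face $E_{10}$ is the lift of $\partial\Omega\times\Omega\times[0,\infty)$ to $M_h^2$; it exists at all positive times, and its boundary defining function is $y$. The operators $\partial_t$ and $\partial_x$ are \emph{tangent} to $E_{10}$, so the normal operator of $t\mathcal L$ there is not the half-plane heat operator, but rather the ordinary differential operator $-t\,\partial_{yy}$ acting in the one normal variable $y$ (with all other variables appearing parametrically). This is what the paper means by ``$t\mathcal L$ is elliptic in the $y$-direction.'' Concretely, to kill the order-zero coefficient $e_0(t,x,x',y')$ of the error at $E_{10}$, the paper adds to the parametrix a term whose Taylor expansion begins with $\frac{1}{2}y^2$ times a suitable multiple of $e_0$; applying $-t\partial_{yy}$ to $y^2$ produces the needed $y^0$ term, and the Taylor iteration runs purely algebraically, with no heat kernel inversion anywhere. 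Each correction $A_j$ is of order $y^j$ with $j\geq 2$ times a coefficient of order $-3$ at sf, so it sits at order $\geq -1$ at sf and cannot touch the leading term there; the compatibility at sf$\cap E_{10}$ that worries you at the end of your proposal is a one-line order count in this framework, not an obstacle.

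By contrast, inverting the half-plane heat operator by convolution (Duhamel) with the half-plane Neumann heat kernel is the wrong tool here. Convolution in the left variables produces a global-in-space solution of $t\mathcal L K = -e$ for the \emph{model} operator, so after multiplying by a cutoff the equation is no longer solved; moreover $t\mathcal L$ for the domain $\Omega$ differs from the flat half-plane operator by the $a_i$ terms, so even the uncut convolution does not annihilate $e$. You would then need a composition theorem to control the index sets of $K$ at all other faces, which is exactly the machinery the paper reserves for the subsequent Neumann-series step—not for this boundary-regularity improvement. Finally, your formal series ``$K\sim\sum_{k\geq 0}y^k K_k$'' is internally inconsistent: if $K_k$ is produced by convolving the source $y^k e_k$ with the heat kernel in the left variables, it is already a function of $y$, and it makes no sense to multiply it by a further $y^k$. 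Replacing this whole step with the simple antidifferentiation-in-$y$ trick—solve $-\partial_{yy}u=c$ by $u=-\tfrac12 y^2 c$, iterate, Borel-sum—gives the paper's proof directly and removes every one of these difficulties.
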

	\begin{proof} This is a standard argument, as in \cite[p. 32]{etdeo1} and  \cite{MaVe}. We use the fact that $T^2\mathcal L$ is elliptic in the $y$-direction to iteratively solve away the Taylor expansion of $T^2\mathcal LH^{(1)}$ at $E_{10}$. To be concrete, let $A_2$ be a pc kernel on $\Omega^2_h$, smooth at $E_{10}$, whose expansion in any coordinate neighborhood $(t,x,y,x',y')$ is
		\[\frac 12y^2\left(\mathcal LH^{(1)}\right)(t,x,0,x',y')+ O(y^3).\]
		Then $(T^2\mathcal L)A_2$ is pc as well, and its leading order term at $E_{10}$ is 
		\[\left(T^2\mathcal LH^{(1)}\right)(t,x,0,x',y').\] Furthermore, using the coordinates \eqref{eq:modelcoordsintsf} it is straightforward to see that $A_2$ may be chosen to be pc down to sf and have the same order as $y^2\mathcal LH^{(1)}$ at sf, namely $2-2=0$. So if we consider $H^{(1)}-A_2$, then $(T^2\mathcal L)(H^{(1)}-A_2)$ is pc and vanishes to first order, rather than zeroth order, at $E_{10}$. Moreover $H^{(1)}-A_2$ still satisfies Neumann boundary conditions.
		
		This construction may now be iterated to produce $A_3$, $A_4$, et cetera, so that $H^{(1)}-\sum_{j=2}^{k}A_j$ vanishes to order $k-1$ at $E_{10}$. The $A_j$ may be summed, and then setting
		\[H^{(2)}=H^{(1)}-\sum_{j=2}^{\infty}A_j\]
		gives us the result. Note that since each $A_j$ has order $-1$ or better at sf ($\mathcal LH^{(1)}$ has order $-3$ there, but $y^2$ has order 2), the leading order term of $H^{(2)}$ at sf is still $\mathcal H_{-2,\sff,N}$.
	\end{proof}
	
	Now, as in \cite{MaVe}, let $P^{(2)}=(t\mathcal L)H^{(2)}$. Then, as a kernel,
	\[\mathcal LH^{(2)}=\frac 1{T^2}P^{(2)}\in\mathcal A_h^{\infty,-3,\infty,0}.\]
	Kernels on $\Omega_h^2$ may be naturally identified as convolution operators on $[0,\infty)\times\Omega$, acting in the usual way. In this sense, as in \cite[(7.67)]{tapsit} we have
	\begin{equation}\label{eq:kerneltoconvop}
		\mathcal LH^{(2)}=\Id-\left(-\frac{1}{T^2}P^{(2)}\right).
	\end{equation}

	To see this for any $g\in C^{\infty}([0,\infty)\times \Omega)$, we have
	\[\mathcal LH^{(2)}*g(t)=(\partial_t+\Delta)\int_0^t[H^{(2)}g](s)(t-s)\, ds.\]
	By the fundamental theorem of calculus and the definition of $P^{(2)}$, this becomes
	\[ [H^{(2)}g(t)](0)+\int_0^t \left[\frac{1}{s}[P^{(2)}g](s)\right](t-s)\, ds.\]

	Since $[H^{(2)}g(t)](0)=g(0)$ by the delta function initial condition, we have \eqref{eq:kerneltoconvop}.
	
	To invert the right-hand side of \eqref{eq:kerneltoconvop}, we use the Neumann series
	\[\left(\Id+\frac{1}{T^2}P^{(2)}\right)^{-1}=\Id-\sum_{j=1}^{\infty}\left(-\frac{1}{T^2}P^{(2)}\right)^{j}=:\Id+P^{(3)}.\]
	By our composition Theorem \ref{thm:comp},
	since $-T^{-2}P^{(2)}\in\mathcal A_h^{\infty,-3,\infty,0}$, we obtain that for each $j$,
	\[\left(-\frac{1}{T^2}P^{(2)}\right)^j\in\mathcal A_h^{\infty,-4+j,\infty,0}.\]
	This series may therefore be asymptotically summed, and we obtain that $P^{(3)}\in\mathcal A_h^{\infty,-3,\infty,0}$.
	
	In fact this asymptotic sum is a legitimately convergent sum. This is asserted in \cite{MaVe} in the edge case and proven in \cite[p. 270]{tapsit} for compact manifolds; the same applies here as well.
	
	Finally, set
	\[H^{(3)}=H^{(2)}\left(\Id+P^{(3)}\right).\]
	By the definition of convolution, the Neumann boundary conditions, being satisfied by $H^{(2)}$, are also satisfied by $H^{(3)}$.
	Since $H^{(2)}\in\mathcal A_h^{-2,-2,0,0}$ and $P^{(3)}\in\mathcal A_h^{\infty,-3,\infty,0}$, our composition Theorem \ref{thm:comp} tells us that 
	\[H^{(3)}\in\mathcal A_h^{-2,-2,0,0}+\mathcal A_h^{\infty,-1,0,0}.\]
	Since $\mathcal LH^{(3)}=\Id$, $H^{(3)}$ satisfies the delta function initial condition. By uniqueness for the Neumann heat kernel on a manifold with boundary, $H^{(3)}$ must therefore be the true heat kernel.
	
	\begin{theorem}\label{thm:neumannheat}The Neumann heat kernel on $\Omega$ is pc on $\Omega^2_h$, and is an element of $\mathcal A_h^{-2,-2,0,0}$. Its expansion at td is given by \eqref{eq:ansatzattd}, and its leading term at sf is given by $\mathcal H_{-2,\sff,N}$. Moreover it is smooth down to both $E_{j0}$ and $E_{0j}$, and its expansion at sf is $T^{-2}$ times a smooth expansion.
	\end{theorem}
	\begin{proof} The heat kernel is smooth down to $E_{j0}$ because $H^{(2)}$ is smooth by construction, and the composition theorem implies $H^{(3)}$ is smooth. The required statement at sf follows from the same logic. Finally, it is smooth down to $E_{0j}$ because it is symmetric. For the leading term statements, the leading terms of $H^{(2)}$ have the claimed properties, and $H^{(2)}P^{(3)}$ vanishes rapidly at td and is lower order than $H^{(2)}$ at sf.  This completes the proof.
	\end{proof}
	
	An analogous theorem, proved in an identical fashion, holds for the Dirichlet heat kernel. Since the Dirichlet boundary condition is the Friedrichs extension for a one-dimensional cone, this is in fact a special case of \cite{MaVe}. Note that the Dirichlet boundary condition implies that the heat kernel vanishes to first order at $E_{10}$ and $E_{01}$:
	\begin{theorem}\label{thm:dirichletheat}[\cite{MaVe}]The Dirichlet heat kernel on $\Omega$ is pc on $\Omega^2_h$, and is an element of $\mathcal A_h^{-2,-2,1,1}$. Its expansion at td is given by \eqref{eq:ansatzattd}, and its leading term at sf is given by $\mathcal H_{-2,\sff,D}$. Moreover it is smooth down to both $E_{j0}$ and $E_{0j}$, and is $T^{-2}$ times a smooth expansion at sf.
	\end{theorem}
	
	\subsubsection{The Robin heat kernel} We now construct the Robin heat kernel on $\Omega$ as a correction, or perturbation, of the Neumann heat kernel on $\Omega$. To distinguish it, let $H_{Neumann}(t,z,z')$ be the Neumann heat kernel, which is pc on $\Omega^2_h$ by the previous section. Our Robin boundary condition is
	\[\left . \frac{\partial u(x,y)}{\partial y} \right|_{y=0}=\kappa(x)u(x,0).\]
	
	Our model will be the Robin heat kernel for a half-space with constant $\kappa$. With that in mind, define
	\[H^{(0)}_{Robin}:=H_{Neumann}-\frac{\kappa(XT+x')}{2\sqrt{\pi} T}\exp\left[-\frac 14X^2\right]\mbox{erfc}\left(\frac 12(\xi+\xi')\right).\]
	The distinction here is that now $\kappa$ depends on the variable $x$, which can be expressed in terms of the coordinates $X$, $T$, and $x'$ as $x=XT+x'$.

	Both terms are pc. The first term is an element of $\mathcal A_h^{-2,-2,0,0}$ and the second term is an element of $\mathcal A_h^{\infty,-1,0,0}$. Now we compute, using erfc$'(s)=-2e^{-s^2}/\sqrt{\pi}$, that
	\begin{equation} \label{eq:robinthingtokill} \begin{gathered}
			\left . \left(\frac{\partial}{\partial y}-\kappa\right)H^{(0)}_{Robin}\right|_{y=0} =  \\ 
			\left . \frac 1T\frac{\partial}{\partial\xi}\right|_{\xi=0}\left(-\frac{\kappa(XT+x')}{2\sqrt{\pi} T}\exp\left[-\frac 14X^2\right]\erfc\left(\frac 12(\xi+\xi')\right)\right) \\ 
			\left .-\kappa H^{(0)}_{Robin}\right|_{y=0} = \\   
			\frac{\kappa(XT+x')}{2\pi T^2}\exp\left[-\frac 14X^2\right]\exp\left[-\frac 14\xi'^2\right] \\ 
			-\kappa(XT+x') \left . H_{Neumann}\right|_{y=0} \\ 
			+\frac{(\kappa(XT+x'))^2}{2\sqrt{\pi} T}\exp\left[-\frac 14X^2\right]\mbox{erfc}\left(\frac 12\xi'\right). 
		\end{gathered} 
	\end{equation}
	Let this right-hand side be $c(T,X,x',\xi')$. To put it politely, this is not zero. We correct this defect by defining
	\[H^{(1)}_{Robin}:=H^{(0)}_{Robin}-ye^{-(y/T)^2}c(T,X,x',\xi').\]
	This fixes the Robin defect: the derivative in $y$ at $y=0$ of the second term above is precisely $-c(T,X,x',\xi')$, and $\kappa$ times this term is zero at $y=0$, so we have 
	\[ \left . \left(\frac{\partial}{\partial y}-\kappa(x)\right)H^{(1)}_{Robin}\right|_{y=0}=0.\]

	\begin{lemma}\label{lem:basicfactsrobinparam} The function $H^{(1)}_{Robin}$ is an element of $\mathcal A^{-2,-2,0,0}_h$, satisfying Robin boundary conditions in the left variable. At td, it has the same expansion as $H_{Neumann}$. At sf, the first two terms of its asymptotic expansion are the first two terms for $H_{Neumann}$ plus $T^{-1}\mathcal H_{-1,\sff,R}$, where
		\[\mathcal H_{-1,\sff,R}:=-\frac{\kappa(x')}{2\sqrt{\pi}}\exp\left[-\frac 14X^2\right]\erfc\left(\frac 12(\xi+\xi')\right).\]
		(Comparing to \eqref{eq:modelminus1sfR}, the only change is that $\kappa$ is now a function of $x'$ rather than a constant.) Furthermore,
		\[t\mathcal L H^{(1)}_{Robin}\in\mathcal A^{\infty,0,0,0}_h.\]
	\end{lemma}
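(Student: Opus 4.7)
The Robin boundary condition in the left variable was already checked in the direct computation immediately preceding the lemma statement. What remains is to establish (i) polyhomogeneity with the stated index sets, (ii) the leading-order statements at td and sf, and (iii) the assertion that $t\mathcal L H^{(1)}_{Robin}\in\mathcal A^{\infty,0,0,0}_h$. My plan is to treat the three summands in the definition of $H^{(1)}_{Robin}$ separately, leveraging the pc structure of $H_{Neumann}$ from the Neumann theorem above, together with the rapid decay of $\erfc$ at infinity and of $e^{-\xi^2}$ as $\xi\to\infty$.

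For (i) and (ii), I will verify polyhomogeneity directly in the coordinate systems \eqref{eq:modelcoordsintsf}, \eqref{eq:modelcoordsinttd}, and \eqref{eq:modelcoordssfandtd} near the relevant boundary hypersurfaces of $\Omega^2_h$. The key observations are: $\xi=y/T$ and $\xi'=y'/T$ are smooth coordinates in the sf-adapted charts, so the $\erfc$ correction is pc there; near td with $y'$ fixed and $T\to 0$ we have $(\xi+\xi')/2\to\infty$, so both $\erfc((\xi+\xi')/2)$ and $e^{-\xi^2}$ vanish to infinite order and kill each correction at td; near tf the Gaussian $e^{-X^2/4}$ gives infinite-order decay; and at $E_{10}$ the third summand vanishes identically because $y=0$ there. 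Together these give (i). For (ii) at td, the infinite-order decay just established implies the td expansion matches that of $H_{Neumann}$ exactly. At sf, the $T^{-2}$ coefficient is unchanged by the corrections, and the new $T^{-1}$ coefficient picks up precisely $\mathcal H_{-1,sf,R}$ from the $\erfc$ term (with $\kappa$ frozen to $\kappa(x')$ in the limit). Crucially, the third summand is $O(T^0)$, not $O(T^{-1})$, at sf: its prefactor $ye^{-(y/T)^2}=T\xi e^{-\xi^2}$ is of order $T$, and $c$ itself is of order $T^{-1}$ because the two a priori $O(T^{-2})$ contributions in \eqref{eq:robinthingtokill}, namely the first summand and the leading piece of $\kappa(XT+x')H_{Neumann}|_{y=0}$, cancel exactly once one uses $\mathcal H_{-2,sf,N}|_{\xi=0}=(2\pi)^{-1}e^{-X^2/4}e^{-\xi'^2/4}$.

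For (iii), since the Neumann heat kernel solves the heat equation in the interior, $t\mathcal L H_{Neumann}=0$, and thus $t\mathcal L H^{(1)}_{Robin}$ comes entirely from the two correction terms. Infinite-order vanishing at td is preserved since $t\mathcal L$ is tangent to td and maps pc to pc. At sf, the Euclidean half-space part of $t\mathcal L$ in \eqref{eq:liftintsfbnc}, applied to the leading $T^{-1}\mathcal H_{-1,sf,R}$ with frozen coefficient $\kappa(x')$, is annihilated by the model equation \eqref{eq:minus1sfRmodelworks}; the remaining contributions to $t\mathcal L$ at sf are $O(T)$, coming from $\kappa(XT+x')-\kappa(x')=O(T)$ and from the metric perturbation terms $a_1(XT+x',\xi T)\partial_{XX}$, $Ta_2(XT+x',\xi T)\partial_X$, $Ta_3(XT+x',\xi T)\partial_\xi$ in \eqref{eq:liftintsfbnc}, each of which is $O(T)$ thanks to $a_1(x',0)=0$ and $a_2(x',0)=0$, so when applied to a quantity of order $T^{-1}$ they contribute $O(T^0)$. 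The third summand is already $O(T^0)$ at sf by the previous paragraph, and $t\mathcal L$, being tangent to sf, preserves this order. Smoothness at $E_{10}$, $E_{01}$ and infinite-order vanishing at tf carry over automatically. The principal obstacle I expect is the careful bookkeeping of sub-leading orders at sf: pinning down the cancellation inside $c$, and matching the $O(T)$ corrections from $\kappa$ and from the metric against precisely what the model equation annihilates.
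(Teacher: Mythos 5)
Your proof is correct and follows essentially the same strategy as the paper's: decompose $H^{(1)}_{Robin}$ into $H_{Neumann}$ plus the two correction terms, show the $\erfc$-correction has leading order $T^{-1}\mathcal H_{-1,sf,R}$ at sf and vanishes rapidly at td and tf, establish \eqref{eq:claim1robinparam} for the third summand via the cancellation of the two $O(T^{-2})$ pieces inside $c$ using $\mathcal H_{-2,sf,N}|_{\xi=0}$, and then handle $t\mathcal L$ by using $t\mathcal L H_{Neumann}=0$ together with the model identity \eqref{eq:minus1sfRmodelworks} for the frozen coefficient $\kappa(x')$ and the $O(T)$ bound on the metric and Robin-parameter perturbations. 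The only point you gloss over slightly is the behavior at $E_{10}$, where $t\mathcal L$ is not tangent and one needs (as the paper notes) that it maps a smooth Taylor expansion in $y$ to another smooth expansion; since both correction terms are manifestly smooth in $y$, your conclusion still stands.
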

	\begin{remark}Note that $H^{(1)}_{Robin}$ is a slightly better parametrix than $H^{(1)}$ was for the Neumann and Dirichlet problems.  This is because we have solved the model problem to two orders at sf, rather than just to first order. This is necessary because we want to identify the sub-leading term of the true Robin heat kernel at sf.
	\end{remark}

	\begin{proof}We just showed $H^{(1)}$ does satisfy the Robin boundary condition. Now we claim it is pc. At first it appears there are problems with the second term at $y=T=0$ away from the diagonal, and that we might need to blow up the intersection of $E_{10}$ and tf. However, at this intersection, $c(T,X,x',\xi')$ decays rapidly, so in fact $H^{(1)}_{Robin}$ is already pc. It decays to infinite order at tf because both terms in its definition do. We also have
		\begin{align*} 
			H_{Robin}^{(1)}-H_{Neumann} = &-ye^{-(y/T)^2}c(T,X,x',\xi') \\ & -\frac{\kappa(XT+x')}{2\sqrt{\pi} T} \exp\left[-\frac 14X^2 \right] \erfc \left(\frac 12(\xi+\xi')\right).
		\end{align*}
		It is immediate that the leading order of the second term at sf is in fact $T^{-1}\mathcal H_{-1,\sff,R}$, and that the second term vanishes to infinite order at td and is smooth up to all other boundary hypersurfaces. We claim that
		\begin{equation}\label{eq:claim1robinparam}
			ye^{-(y/T)^2}c(T,X,x',\xi')\in\mathcal A_h^{\infty,0,0,0},
		\end{equation}
		which immediately implies the statements concerning the asymptotic expansions of $H^{(1)}_{Robin}$ at td and sf.
		
		To prove \eqref{eq:claim1robinparam}, we need to check decay. The statements at $E_{10}$ and $E_{01}$ are obvious, and the presence of $ye^{-(y/T)^2}$ implies the requisite infinite-order decay at td. The trickier face is sf. Since $y$ decays to first order at sf, and $e^{-(y/T)^2}$ is smooth, we need to examine $c(T,X,x',\xi')$ and show it has order at worst $-1$. The third term in \eqref{eq:robinthingtokill} is already order -1.  However, the first and second terms have order $-2$, but we shall compute that their difference has order $-1$. As $T\to 0$, because $H_{Neumann}$ is polyhomogeneous, its restriction to $y=0$ has an expansion at sf, and the leading term is \eqref{eq:modelminus2sfN}, so
		\begin{align*} \left . H_{Neumann}\right|_{y=0} &= 
			\left .\frac{1}{T^2}\mathcal H_{-2,\sff,N}\right|_{\xi=0}+ O\left(\frac 1T\right) \\
			& =\frac{1}{2\pi T^2}\exp\left[-\frac 14X^2\right]\exp\left[-\frac 14(\xi')^2\right]+ O\left(\frac 1T\right). \end{align*} 
		Examining \eqref{eq:robinthingtokill}, this $T^{-2}$ term here cancels the first $T^{-2}$ term, and thus $c(T,X,x',\xi')$ is $O(\frac 1T)$ at sf. Therefore the order of $ye^{-(y/T)^2}c(T,X,x',\xi')$ is at worst $1+(-1)=0$ at sf, proving the claim \eqref{eq:claim1robinparam}.
		
		Now consider $(t\mathcal L)H^{(1)}_{Robin}$. Since $\mathcal LH_{Neumann}=0$ because it is the heat kernel, we have
		\begin{align}  (t\mathcal L)H^{(1)}_{Robin} & =(t\mathcal L)(H^{(1)}_{Robin}-H_{Neumann}) \\ 
			& =(t\mathcal L)(-ye^{(-y/T)^2}c(T,X,x',\xi')) \nonumber \\ & -(t\mathcal L)\left(\frac{\kappa(XT+x')}{2\sqrt{\pi} T}\exp\left[-\frac 14X^2\right]\erfc\left(\frac 12(\xi+\xi')\right)\right). \nonumber
		\end{align}
		The first term is an element of $\mathcal A^{\infty,0,0,0}_h$ before applying $t\mathcal L$. Since $t\mathcal L$ is tangent to all  boundary hypersurfaces except $E_{10}$, it preserves the orders, and at $E_{10}$, $t\cL$ takes a smooth expansion to a smooth expansion.
		The second term is an element of $\mathcal A^{\infty,-1,0,0}_h$, but we can write the Taylor expansion of $\kappa(XT+x')$ in powers of $T$. The $T^0$ term is just $\kappa(x')$, yielding $\mathcal H_{-1,\sff,R}$, which $t\mathcal L$ annihilates by \eqref{eq:minus1sfRmodelworks}. All terms except the $T^0$ term are elements of $\mathcal A^{\infty,0,0,0}_h$, and thus remain so after the application of $t\mathcal L$. This completes the proof of Lemma \ref{lem:basicfactsrobinparam}.
	\end{proof}
	
	From here the argument is very similar to the Neumann and Dirichlet arguments.
	\begin{proposition} There exists an element $H^{(2)}_{Robin}\in\mathcal A_h^{-2,-2,0,0}$ which satisfies Robin boundary conditions in the left factor, with $\lim_{t\to 0}H^{(2)}=\delta(z-z')$, and with
		\[t\mathcal LH^{(2)}\in\mathcal A_h^{\infty,0,\infty,0}.\]
		Moreover the expansions of $H^{(2)}_{Robin}$ and $H^{(1)}_{Robin}$ are identical for all terms at td and for the terms of order $-2$ and $-1$ at sf.
	\end{proposition}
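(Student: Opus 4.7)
The plan is to mirror the argument of Proposition \ref{prop:improvementatrf} from the Neumann case almost verbatim, exploiting the observation that correction terms which vanish to order at least two in $y$ at $E_{10}$ automatically preserve the Robin boundary condition. By Lemma \ref{lem:basicfactsrobinparam}, the error $t\mathcal L H^{(1)}_{Robin}$ already has order $\infty$ at td and at tf, so we only need to improve its order at $E_{10}$ from $0$ to $\infty$, while keeping the leading $T^{-2}$ and $T^{-1}$ behavior at sf undisturbed.

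I would construct inductively, for $j \geq 2$, a polyhomogeneous kernel $A_j$ on $\Omega^2_h$ which is smooth at $E_{10}$ and whose Taylor expansion in boundary normal coordinates $(t,x,y,x',y')$ starts at order $y^j$. The first term $A_2$ is chosen so that
\[ A_2 = \tfrac{1}{2} y^2 \, (\mathcal L H^{(1)}_{Robin})(t,x,0,x',y') + O(y^3), \]
extended off the boundary so as to be pc on the whole double space and to vanish to infinite order at td and tf (possible by the usual procedure, since $\mathcal L H^{(1)}_{Robin}$ vanishes to infinite order at td and tf by Lemma \ref{lem:basicfactsrobinparam}). Using $\mathcal L = -\partial_{yy} + (\text{terms tangent to } y=0)$ together with $\partial_{yy}(\tfrac12 y^2 f) = f$ when $f$ is independent of $y$, one checks that $t\mathcal L (H^{(1)}_{Robin} - A_2)|_{y=0} = 0$, improving the vanishing order at $E_{10}$ by one. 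Iterating, we define $A_j$ to kill the leading Taylor coefficient of $t\mathcal L (H^{(1)}_{Robin} - \sum_{k=2}^{j-1} A_k)$ at $E_{10}$, then asymptotically sum via Borel to obtain $A = \sum_{j \geq 2} A_j$ and set $H^{(2)}_{Robin} = H^{(1)}_{Robin} - A$.

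The Robin boundary condition is preserved for free at every step: since each $A_j$ vanishes to order $j \geq 2$ in $y$ at $E_{10}$, both $A_j|_{y=0} = 0$ and $\partial_y A_j|_{y=0} = 0$, so $(\partial_y - \kappa) A_j|_{y=0} = 0$. The delta-function initial condition is preserved because each $A_j$ vanishes to infinite order at tf. For the orders at sf, the key bookkeeping is: $t\mathcal L H^{(1)}_{Robin}$ has order $0$ at sf, so $\mathcal L H^{(1)}_{Robin}$ has order $-2$ at sf, while $y^j$ has order $j$ at sf in the coordinates \eqref{eq:modelcoordsintsf}; hence $A_j$ can be chosen to have order at least $j - 2 \geq 0$ at sf. Consequently, subtracting $A$ from $H^{(1)}_{Robin}$ leaves the $T^{-2}$ and $T^{-1}$ coefficients of the expansion at sf unchanged, and equally leaves the full expansion at td unchanged.

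The main bookkeeping obstacle is verifying that each $A_j$ can simultaneously be arranged to (i) have the prescribed Taylor leading coefficient at $E_{10}$, (ii) be polyhomogeneous at sf with order $\geq 0$, and (iii) vanish to infinite order at td and tf — together with showing the resulting Borel sum still lies in $\mathcal A_h^{-2,-2,0,0}$. All of this is standard in the geometric microlocal calculus and is carried out explicitly in the proof of Proposition \ref{prop:improvementatrf} and in \cite{etdeo1, MaVe, tapsit}; the only genuinely new ingredient here is the automatic compatibility with the Robin boundary condition, which as noted above is immediate from $A_j = O(y^2)$.
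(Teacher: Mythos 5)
Your proof is correct and takes essentially the same approach as the paper: the paper's proof is a one-sentence reference to the Neumann argument of Proposition~\ref{prop:improvementatrf} together with the order bookkeeping at~sf, and you have simply unpacked that argument, including the observation (implicit in the paper, explicit in your write-up) that correctors vanishing to order $y^2$ at $E_{10}$ automatically preserve the Robin boundary condition. Note that, as in the paper's Proposition~\ref{prop:improvementatrf}, the sign convention needs a small adjustment — with $A_2 = \tfrac12 y^2(\mathcal L H^{(1)}_{Robin})|_{y=0}$ and $\mathcal L$ having leading term $-\partial_{yy}$, one should set $H^{(2)}_{Robin} = H^{(1)}_{Robin} + \sum A_j$ (or equivalently flip the sign of $A_j$) so that the leading Taylor coefficient at $E_{10}$ actually cancels rather than doubles; this inherited wrinkle does not affect the substance of the argument.
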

	\begin{proof} We add terms at order $y^2$ and up at the face $E_{10}$, as in Proposition \ref{prop:improvementatrf}. Note that each of these terms is $y^k$ for $k\geq 2$ times a term which has order $-2$ at sf, and thus each of these terms has order greater than or equal to zero at sf, so there is no effect on the first two terms of the expansion there.
	\end{proof}
	Now we let
	\[P^{(2)}_{Robin}:=t\mathcal LH^{(2)}_{Robin};\ P^{(3)}_{Robin}:=-\sum_{j=1}^{\infty}\left(-\frac 1{T^2}P^{(2)}_{Robin}\right)^j.\]
	We have $T^{-2}P^{(2)}_{Robin}\in\mathcal A_h^{\infty,-2,\infty,0}$, so its $j$th power is an element of $A_h^{\infty,-4+2j,\infty,0}$ by the composition theorem, and thus $P^{(3)}_{Robin}\in\mathcal A_h^{\infty,-2,\infty,0}$. As before the sum is convergent, not just asymptotically convergent. 
	Then set
	\[H^{(3)}_{Robin}=H^{(2)}_{Robin}\left(\Id+P^{(3)}_{Robin}\right).\]
	This satisfies the Robin boundary conditions and the initial condition, so by uniqueness it is the true Robin heat kernel. By composition, $H^{(2)}_{Robin}P^{(3)}_{Robin}\in\mathcal A_h ^{\infty,0,0,0}$, so $H^{(3)}_{Robin}$ has the same first two terms at sf and same full expansion at td as $H^{(2)}_{Robin}$. We have now proved:
	\begin{theorem}\label{thm:bigrobinmwb}The Robin heat kernel on $\Omega$, with smooth non-negative Robin parameter $\kappa(x)$, is pc on $\Omega^2_h$, and is an element of $\mathcal A_h^{-2,-2,0,0}$, smooth down to both $E_{10}$ and $E_{01}$ and equal to $T^{-2}$ times a smooth expansion at sf. It is equal to the Neumann heat kernel on $\Omega$ plus a correction term which is an element of $\mathcal A_h^{\infty,-1,0,0}$ and which has leading order $\mathcal H_{-1,\sff,R}$ at sf.
	\end{theorem}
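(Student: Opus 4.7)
The plan is to construct the Robin heat kernel as a perturbation of the Neumann heat kernel, mirroring the three-step parametrix-and-Neumann-series strategy just used for the Neumann case. First I would take the Neumann heat kernel $H_{Neumann}$ on $\Omega$, which is already pc on $\Omega^2_h$ by the preceding theorem, and add to it the half-plane Robin correction term derived in \S 4.2, but now with $\kappa$ evaluated at the variable point $x = XT+x'$ rather than treated as a constant. The resulting $H^{(0)}_{Robin}$ fails to satisfy the exact Robin boundary condition because $\kappa$ is no longer constant; the defect $c(T,X,x',\xi')$ appearing in \eqref{eq:robinthingtokill} must be killed. I would absorb this defect by subtracting $ye^{-(y/T)^2}c(T,X,x',\xi')$, which vanishes at $y=0$ but whose normal derivative at $y=0$ equals $c$, so that the Robin condition is satisfied exactly.

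The main obstacle in this first step is to show that $H^{(1)}_{Robin}$ lies in $\mathcal A^{-2,-2,0,0}_h$ and to identify its leading two terms at sf. Naively, the first and second summands of $c$ each have order $-2$ at sf, which would produce a term of order $0$ at sf from $yc$; but as $T\to 0$ these leading singularities exactly cancel because $H_{Neumann}|_{y=0}$ has leading order at sf given by $\mathcal H_{-2,sf,N}|_{\xi=0}$, and this matches the Gaussian factor appearing in $c$. Establishing this cancellation, together with the infinite-order decay at td provided by the $e^{-(y/T)^2}$ factor, is the heart of Lemma \ref{lem:basicfactsrobinparam}: it yields $H^{(1)}_{Robin}\in\mathcal A^{-2,-2,0,0}_h$ with first two sf-coefficients equal to $\mathcal H_{-2,sf,N}$ and $\mathcal H_{-1,sf,R}$, and $(t\mathcal L)H^{(1)}_{Robin}\in\mathcal A^{\infty,0,0,0}_h$.

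Next I would upgrade the parametrix at the side face $E_{10}$. Since $t\mathcal L$ is elliptic in $y$, I can iteratively remove the Taylor coefficients of $(t\mathcal L)H^{(1)}_{Robin}$ at $E_{10}$ by adding correction terms of the form $y^k$ times a smooth symbol, exactly as in Proposition \ref{prop:improvementatrf}, and then asymptotically sum these corrections using Borel's lemma. Because each correction carries at least two factors of $y$ and $H^{(1)}_{Robin}$ already has order $-2$ at sf, every correction has sf-order $\geq 0$; in particular the first two terms of the sf-expansion are untouched. This yields $H^{(2)}_{Robin}\in\mathcal A_h^{-2,-2,0,0}$ satisfying Robin boundary conditions, with $(t\mathcal L)H^{(2)}_{Robin}\in\mathcal A_h^{\infty,0,\infty,0}$.

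Finally I would invert via a Neumann series. Setting $P^{(2)}_{Robin}=(t\mathcal L)H^{(2)}_{Robin}$ and using the convolution identity $\mathcal L H^{(2)}_{Robin}=\Id-(-T^{-2}P^{(2)}_{Robin})$, define
\begin{equation*}
P^{(3)}_{Robin}:=-\sum_{j=1}^\infty\left(-\frac{1}{T^2}P^{(2)}_{Robin}\right)^j.
\end{equation*}
The composition Theorem \ref{thm:comp} gives that the $j$-th term lies in $\mathcal A_h^{\infty,-4+2j,\infty,0}$, so the series converges and $P^{(3)}_{Robin}\in\mathcal A_h^{\infty,-2,\infty,0}$. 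Setting $H^{(3)}_{Robin}:=H^{(2)}_{Robin}(\Id+P^{(3)}_{Robin})$, a further application of Theorem \ref{thm:comp} shows $H^{(2)}_{Robin}P^{(3)}_{Robin}\in\mathcal A_h^{\infty,0,0,0}$, so $H^{(3)}_{Robin}$ inherits the expansion of $H^{(2)}_{Robin}$ at td in full and at sf through the first two orders. Convolution preserves the Robin boundary condition and the construction gives $\mathcal L H^{(3)}_{Robin}=\Id$, so by uniqueness $H^{(3)}_{Robin}$ is the true Robin heat kernel. The smoothness at $E_{10}$ and $E_{01}$ then follows from the composition formula and symmetry of the kernel, completing the proof.
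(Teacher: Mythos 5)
Your proposal follows the paper's proof essentially verbatim: perturb the Neumann kernel by the half-plane Robin correction with variable $\kappa$, fix the boundary defect by subtracting $ye^{-(y/T)^2}c$, verify the $T^{-2}$ cancellation to establish Lemma \ref{lem:basicfactsrobinparam}, upgrade at $E_{10}$ by solving away the Taylor expansion, and close with a Neumann series as in the Dirichlet/Neumann case. One small slip in your order-counting: since $y$ has order $1$ at sf, a defect $c$ of naive order $-2$ would give $yc$ order $-1$ (not $0$), and it is precisely the cancellation of the two leading summands of $c$ that improves $c$ to order $-1$ and hence $yc$ to order $0$, as needed to preserve the first two sf-coefficients.
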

	\noindent Indeed, the only part of this we have not addressed is the smoothness, and it follows as in the Neumann case.

	\section{The heat kernel on a curvilinear polygonal domain}  \label{s:construction} 
	Consider a two-dimensional sector $S_{\gamma}$ with angle $\gamma\in(0,2\pi)$. We investigate heat kernels on $S_{\gamma}$ to serve as models for our eventual construction of the heat kernel on curvilinear polygonal domains. 
	
	\subsection{Properties of the heat kernel for an infinite sector}  \label{s:exactwedge} 
	We will consider the D-D, N-N, and D-N heat kernels.  Recall the expression from \cite[p. 592 (3.42)]{cheeger}, which in our setting is simply
	\begin{equation}\label{eq:heatkernelsectorformula}
		H(t,r,\theta,r',\theta')=\frac{1}{2t}\exp\left[-\frac{r^2+(r')^2}{4t}\right]\sum_{j=1}^{\infty}I_{\mu_j}\left(\frac{rr'}{2t}\right)\phi_j(\theta)\phi_j(\theta').
	\end{equation}
	Here $I_{\mu_j}$ are the modified Bessel functions, and $(\phi_j,\mu_j)$ are the eigenfunctions, and corresponding eigenvalues, of the appropriate eigenvalue problem (D-D, N-N, or D-N) on the interval $[0,\gamma]$.  
	
	The pc properties of \eqref{eq:heatkernelsectorformula} are not obvious from the expression alone.  They are equally non-obvious from the equivalent expression given by the inverse Laplace transform of the Green's function. However, we claim:
	\begin{lemma}\label{lem:yesitspc} In each of the three settings, D-D, N-N, and D-N, the heat kernel \eqref{eq:heatkernelsectorformula} is pc on our double space $(S_{\gamma})_h^2$.
	\end{lemma}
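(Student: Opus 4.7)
The plan is to verify polyhomogeneity face by face on the double heat space $(S_\gamma)^2_h$, whose boundary hypersurfaces are $\tf$, td, the two $\sff$ faces, the two edge faces $E_{10},E_{01}$, the vertex face $\ff$, and the lifts $\hvff$, hvlf, hvrf arising from the blown-up vertex. A crucial tool is the scaling symmetry of the sector: the dilation $(r,r',t)\mapsto(\lambda r,\lambda r',\lambda^2 t)$ preserves the sector, the Laplacian, and all three boundary conditions, forcing
\[H(t,r,\theta,r',\theta')=\lambda^{-2}H(\lambda^{-2}t,\lambda^{-1}r,\theta,\lambda^{-1}r',\theta').\]
In projective coordinates $T=\sqrt t$, $\rho=r/T$, $\rho'=r'/T$ adapted to $\ff$, the rescaled kernel $T^2H$ therefore depends only on $(\rho,\rho',\theta,\theta')$, so pc at $\ff$ reduces to smoothness in those variables together with pc at the intersections $\ff\cap\hvff$, $\ff\cap$td, and $\ff\cap\sff$.

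At the vertex faces $\hvff$, hvlf, hvrf (regimes where $r$ or $r'$ tends to zero at positive $t$), I would substitute the convergent Taylor series $I_\nu(z)=\sum_{k\geq 0}(z/2)^{\nu+2k}/[k!\,\Gamma(\nu+k+1)]$ into \eqref{eq:heatkernelsectorformula}. This produces a formal expansion in powers $r^{\mu_j+2k}(r')^{\mu_j+2k}$, where $\mu_j$ takes the values $j\pi/\gamma$, $(j-1)\pi/\gamma$, or $(2j-1)\pi/(2\gamma)$ in the D-D, N-N, and D-N cases respectively. Each of these index sets is discrete with accumulation only at infinity; the rapid growth of $\Gamma(\mu_j+k+1)$ together with the Gaussian prefactor $\exp[-(r^2+(r')^2)/4t]$ guarantees absolute uniform convergence and control of all tails, so the resulting expansion is genuinely polyhomogeneous in $r$ and $r'$.

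For td I would invoke the uniform Debye large-argument expansion $I_\nu(z)\sim e^z/\sqrt{2\pi z}$, valid for $\nu^2/z$ bounded, and use a Mehler-style summation of $\sum_j\phi_j(\theta)\phi_j(\theta')$ to recover the Euclidean heat kernel as the leading model; the higher-order Debye corrections give the remaining smooth conormal expansion after the $t$-parabolic blow-up of the diagonal. For $\sff$, away from the vertex, the same asymptotic argument together with the half-plane results of Section \ref{halfspace3} identify the sector kernel with a half-plane kernel modulo terms vanishing to infinite order at $\sff$, yielding the appropriate pc model. Rapid decay at $\tf$ is immediate from the Gaussian factor, and smoothness at $E_{10}$, $E_{01}$ with the boundary conditions holding to infinite order follows at once from the trigonometric form of $\phi_j(\theta)$.

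The main obstacle will be compatibility of these local expansions at the intersections $\ff\cap$td and $\ff\cap\hvff$, where one must interpolate between the small- and large-argument asymptotics of $I_\nu(z)$ uniformly in the Bessel index $\nu=\mu_j$. I would handle this by splitting the sum over $j$ at a cutoff $\mu_j\sim\sqrt{rr'/2t}$, applying the Debye expansion to the low-$\mu_j$ range and the small-argument series to the high-$\mu_j$ range, and bounding the transition region via an integral representation of $I_\nu$. A potentially cleaner alternative would be to bypass the series entirely and work from the integral representations \eqref{DirichletGK}, \eqref{NeumannGK}, \eqref{greens-dn} together with Corollary \ref{KerisinvLapofG}, extracting the conormal structure of the inverse Laplace transform at each face by a contour-deformation analysis using the known asymptotic behavior of $K_{i\mu}$.
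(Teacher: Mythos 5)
Your approach is fundamentally different from the paper's. The paper proves pc by a doubling/reflection argument: for the D-D case, $S_\gamma$ doubles to the closed flat cone $C_{2\gamma}$, and the Dirichlet heat kernel equals $H_{C_{2\gamma}}(t,\cdot,\cdot)-H_{C_{2\gamma}}(t,\cdot,\textrm{ref}_L(\cdot))$ by the method of images. The pc property of $H_{C_{2\gamma}}$ is then imported wholesale from Mooers and Mazzeo--Vertman, and the remaining work is purely geometric: checking that the Mazzeo--Vertman heat space is a blow-down of $(C_{2\gamma})^2_h$, and that blowing up the intersection of the lifted diagonal and anti-diagonal (which creates $\sff$) produces a space on which both the direct and reflected terms are simultaneously pc. The N-N case changes a sign, and the D-N case doubles twice to $C_{4\gamma}$. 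Your route instead attacks the Cheeger series directly, face by face, which is exactly what the paper says it is avoiding: ``The pc properties of \eqref{eq:heatkernelsectorformula} are not obvious from the expression alone. They are equally non-obvious from the equivalent expression \dots given by the inverse Laplace transform.''

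The gap in your proposal is the step you yourself identify as ``the main obstacle,'' and it is not a minor loose end — it is the central difficulty that the reflection argument is designed to sidestep. Your scaling observation correctly reduces ff to a $T$-independent profile $F(R,R',\theta,\theta')$, and the small-argument Taylor expansion of $I_\nu$ does give polyhomogeneity at $\hvff$, hvlf, hvrf. But proving pc at ff$\cap$td and ff$\cap\sff$ requires an expansion of $\sum_j I_{\mu_j}(RR'/2)\phi_j(\theta)\phi_j(\theta')$ that is uniform across the transition $\mu_j\sim\sqrt{RR'}$ as $R,R'\to\infty$. This is a genuine uniform-in-order Bessel asymptotics problem of Olver/Debye type, and the Mehler-style resummation you invoke is only the leading term — extracting the full conormal expansion requires controlling every order of the Debye correction series while simultaneously summing over $j$, with explicit error bounds in the crossover regime. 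You sketch a plan (``split the sum \dots bound the transition region via an integral representation'') but do not execute it, and this step is where essentially all of the analytic difficulty of a direct proof lives. Your alternative route via the Green's function integral and contour deformation faces the same issue in disguise: the $K_{i\mu}$ asymptotics must also be controlled uniformly in $\mu$ over a range that interacts with the spatial scales. Until one of these uniform estimates is actually established, the proposal does not close. The paper's reflection argument buys exactly this: it replaces a hard uniform asymptotic analysis with an appeal to an already-proved pc theorem for a cone, at the cost of a small amount of blow-up bookkeeping.
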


	\begin{proof} The proof is based on the reflection argument in \cite[\S 3]{polyakov}. Consider the D-D case for the moment. The sector $S_{\gamma}$ doubles to an infinite flat cone $C_{2\gamma}$, and if we let $L$ cut this cone in half, then we claim that the D-D heat kernel on $S_{\gamma}$ is  
		\begin{equation}\label{eq:doubledhkdirichlet}
			H_{S_{\gamma}}(t,r,\theta,r',\theta')=H_{C_{2\gamma}}(t,r,\theta,r',\theta')-H_{C_{2\gamma}}(t,r,\theta,\textrm{ref}_L(r',\theta')).
		\end{equation}
		Above $H_{C_{2\gamma}}$ is the Friedrichs heat kernel on $C_{2\gamma}$, and $\textrm{ref}_L$ is reflection across $L$. 
		Indeed it is clear that the difference of heat kernels satisfies the heat equation and the initial condition on $S_{\gamma}$, as well as the Dirichlet boundary condition. 
		By uniqueness of the heat kernel, we have \eqref{eq:doubledhkdirichlet}.
		
		The pc properties of $H_{S_{\gamma}}$ may now be deduced from those of $H_{C_{2\gamma}}$, as in \cite{polyakov}. By \cite{mooers}, \cite{MaVe}, $H_{C_{2\gamma}}$ is pc on a double heat space. In the notation of \cite{MaVe}, the $x$-coordinate is $r$, there is no $y$-coordinate, and the $z$-coordinate is $\theta$. The Mazzeo-Vertman heat space is not exactly the same as our heat space $(C_{2\gamma})_h^2$, as \cite{MaVe} do not create a face hvff, but nevertheless:
		
		\begin{proposition}\label{prop:MVheatvsourheat} The Mazzeo-Vertman heat space is a blow-down of $(C_{2\gamma})_h^2$, and therefore $H_{C_{2\gamma}}$ is pc on $(C_{2\gamma})_h^2$.
		\end{proposition}
		\begin{proof}Begin with the manifold with corners $[0,\infty)\times (C_{2\gamma})_0\times (C_{2\gamma})_0$. The Mazzeo-Vertman heat space is created by blowing up:
			\begin{itemize}
				\item $\{0\}\times \tilde V\times \tilde V$; and
				\item the lift of the interior $T=0$ diagonal.
			\end{itemize}
			From this space, we make a further blow-up at $[0,\infty)\times \tilde V\times \tilde V$. We claim that the resulting space is $(C_{2\gamma})^2_h$, which is all we need. Indeed, this further blow-up is disjoint from the lift of the interior $T=0$ diagonal and thus may be done second instead of third, by Proposition \ref{prop:blowupfacts}. It may then be done first instead of second, since nested blow-ups commute (again by Proposition \ref{prop:blowupfacts}). Hence our heat space $(C_{2\gamma})_h^2$ is a blow-up (indeed, an overblown version) of the Mazzeo-Vertman heat space.
		\end{proof}
		This takes care of the direct term, which is the first term in the right side \eqref{eq:doubledhkdirichlet}. The reflected term (the second term in the right side of \eqref{eq:doubledhkdirichlet}) is pc on a nearly identical space, the only difference being that we blow up the $T=0$ anti-diagonal $\{T=0,r=r',\theta=\textrm{ref}_L(\theta')\}$ in the last step rather than the $t=0$ diagonal. The lifts of the diagonal and anti-diagonal are not disjoint.  They intersect at the lift of $\{T=0,r=r',\theta=\theta'\in L\}$.  So in order to obtain a space on which both the direct and reflected terms are pc we blow up that lift before dealing with the diagonal and anti-diagonal. We do this: first blow up that lift, then blow up the diagonal and anti-diagonal, and we have obtained a space on which the direct and reflected terms are both pc.
		
		When we restrict our spatial arguments to lie in $S_{\gamma}$, we claim that this space is the double heat space $(S_{\gamma})^2_h$. Indeed, blowing up the lift of $\{T=0,r=r',\theta=\theta'\in L\}$ is precisely what is needed to create the face sf. We are doing it after blowing up hvff, hvlf, and hvrf, rather than before, but these blow-ups are disjoint (since we have already created ff) and therefore commute. The anti-diagonal does not appear once we have restricted our arguments to lie in $S_{\gamma}$. Therefore  $H_{S_{\gamma}}$ is pc on $(S_{\gamma})_h^2$, as desired.
		
		The argument for the N-N heat kernel is identical; there is a plus sign instead of a minus sign in \eqref{eq:doubledhkdirichlet}.  For the D-N heat kernel, we double twice, to the cone $C_{4\gamma}$, and use the method of images with four terms rather than two. The details are very similar and we omit them here. This proves Lemma \eqref{lem:yesitspc}.
	\end{proof}
	
	Having proven that $H_{S_{\gamma}}$ is pc on the double space, we may write down its leading order models at the various boundary hypersurfaces. We begin at ff. In the interior of ff, good coordinates are given by
	\begin{equation}\label{eq:modelcoordsintff}
		T=\sqrt t,\ R:=\frac{r}{T},\ R':=\frac{r'}{T},\ \theta,\ \theta'.
	\end{equation}
	In fact these coordinates are good uniformly down to hvlf and hvrf in the Mazzeo-Vertman double space, but to create $(S_{\gamma})_h^2$ we have made an additional blowup at hvff, which in these coordinates is $\{R=R'=0\}$. Fortunately this is not important for our present concerns. Writing \eqref{eq:heatkernelsectorformula} in the coordinates \eqref{eq:modelcoordsintff} gives \begin{equation}\label{eq:heatkernelsectorformulaff}
		H_{S_{\gamma}}=\frac{1}{2}T^{-2}\exp\left[-\frac 14(R^2+(R')^2)\right]\sum_{j=1}^{\infty}I_{\mu_j}\left(\frac 12RR'\right)\phi_j(\theta)\phi_j(\theta').
	\end{equation}
	This motivates the definition of the models
	\begin{multline}\label{eq:modelminus2ffall}
		\mathcal H_{-2,\ff,DD},\textrm{ resp. } H_{-2,\ff,DN},\textrm{ resp. } H_{-2,\ff,NN}\\ :=\frac 12\exp\left[-\frac 14(R^2+(R')^2)\right]\sum_{j=1}^{\infty}I_{\mu_j}\left(\frac 12RR'\right)\phi_j(\theta)\phi_j(\theta'),
	\end{multline}
	where $(\phi_j,\mu_j)$ are the eigenfunctions and eigenvalues of the appropriate problems on $[0,\gamma]$. It is then true that the leading term of the expansion of $H_{S_{\gamma}}$ at ff, with D-D, D-N, or N-N boundary conditions, is $T^{-2}\mathcal H_{-2,\ff,DD}$, $T^{-2}\mathcal H_{-2,\ff,DN}$, or $T^{-2}\mathcal H_{-2,\ff,NN}$, respectively.
	
	As usual, the heat kernel is decaying to infinite order at tf. We claim that its models at sf and at td are familiar:
	\begin{proposition} The leading order models at sf and td of $H_{S_{\gamma}}$ are the same as the models \eqref{eq:modelminus2sfD}, \eqref{eq:modelminus2sfN}, and \eqref{eq:modelminus2td} for a manifold with boundary, namely $\mathcal H_{-2,\sff,D}$ or $\mathcal H_{-2,\sff,N}$ at each of the two components of sf (depending on the boundary condition) and $\mathcal H_{-2,\td}$ at td.
	\end{proposition}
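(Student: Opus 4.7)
The plan is to exploit the reflection formula \eqref{eq:doubledhkdirichlet} (and its N-N and D-N analogues) together with the fact that away from the vertex the sector $S_\gamma$ is locally isometric to a half-plane, so the analysis reduces to the half-plane calculations of Section \ref{halfspace3}. Specifically, write
\[H_{S_\gamma}(t,z,z') = H_{C_{2\gamma}}(t,z,z') \mp H_{C_{2\gamma}}(t,z,\mathrm{ref}_L(z'))\]
in the D-D or N-N case (with the analogous four-term expression in the D-N case), and examine each term separately near the interiors of td and sf.

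For the face td, I would first observe that the interior of td corresponds to configurations where $z$ and $z'$ both lie in the interior of $S_\gamma$ with $z\to z'$ as $t\to 0$, while the distance from $z$ (and $z'$) to $\partial S_\gamma$ stays bounded below. Thus $\mathrm{dist}_{C_{2\gamma}}(z,\mathrm{ref}_L(z'))$ is bounded below by roughly twice the distance to the boundary, so by the Gaussian off-diagonal decay of $H_{C_{2\gamma}}$ the reflected terms vanish to infinite order at the interior of td. Only the direct term $H_{C_{2\gamma}}(t,z,z')$ survives. Since the interior of the reflected sector corresponds to a smooth (non-conical) point of $C_{2\gamma}$, the heat kernel $H_{C_{2\gamma}}$ is locally just the heat kernel on a smooth surface, and its leading-order model at td is exactly $\mathcal H_{-2,td}$ by the computation of Section \ref{halfspace3} (or equivalently by the classical small-time Euclidean expansion).

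For the face sf, I would use the coordinates \eqref{eq:modelcoordsintsf} centered at an interior point of an edge. In these coordinates $z\to(x',0)$ and the reflection across $L$ acts locally as $(x',y')\mapsto(x',-y')$. Substituting $z = (x'+TX,\, T\xi)$ and $z' = (x',\, T\xi')$ into $H_{C_{2\gamma}}$, which near the smooth boundary point $(x',0)$ agrees with the Euclidean heat kernel to leading order, the direct term contributes
\[T^{-2}\tfrac{1}{4\pi}\exp[-\tfrac14 X^2]\exp[-\tfrac14(\xi-\xi')^2],\]
while the reflected term contributes
\[\mp T^{-2}\tfrac{1}{4\pi}\exp[-\tfrac14 X^2]\exp[-\tfrac14(\xi+\xi')^2].\]
Their sum is exactly $T^{-2}\mathcal H_{-2,sf,D}$ or $T^{-2}\mathcal H_{-2,sf,N}$, as claimed. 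For the D-N case one repeats this with the four-term image sum on $C_{4\gamma}$; the two "far" image terms decay to infinite order at the interior of sf (since those images of $z'$ cross the vertex), leaving only the direct and immediate reflected terms, which again reproduce the half-plane model.

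The main obstacle is making rigorous the statement that $H_{C_{2\gamma}}$ agrees with the Euclidean heat kernel to leading order at a smooth point; this is standard but needs to be extracted from the pc structure of $H_{C_{2\gamma}}$ on the Mazzeo-Vertman heat space, invoking Proposition \ref{prop:MVheatvsourheat}. Alternatively, one can bypass the doubling argument entirely by noting that a neighborhood of an interior point of an edge in $S_\gamma$ is literally a manifold with smooth boundary, so the construction of Section \ref{s:mwb} applies verbatim locally and directly identifies $\mathcal H_{-2,sf,D/N}$ and $\mathcal H_{-2,td}$ as the leading-order models. This localization argument is probably the cleanest route, with the reflection formula playing the auxiliary role of confirming that the sub-leading behavior of $H_{S_\gamma}$ is also what Section \ref{halfspace3} predicts.
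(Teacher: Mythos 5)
Your primary reflection argument is a genuinely different route from the paper's. The paper's proof is a short locality argument: on any patch of sf away from ff, the spatial arguments are near the boundary but bounded away from the vertex, so by Kac's principle of not feeling the boundary the sector heat kernel agrees to infinite order with the half-plane heat kernel (the paper cites a precise locality theorem for this), and then the polyhomogeneity of the models on sf extends the agreement to all of sf, including $\ff\cap\sff$. Your primary route instead goes through the method-of-images formula \eqref{eq:doubledhkdirichlet} and reduces the question to the pc structure of the Friedrichs heat kernel $H_{C_{2\gamma}}$ on the doubled cone: your coordinate substitution at sf correctly combines the direct and reflected terms into $T^{-2}\mathcal H_{-2,sf,D}$ or $T^{-2}\mathcal H_{-2,sf,N}$, and you correctly identify the remaining work as extracting the Euclidean leading models of $H_{C_{2\gamma}}$ from its pc structure on the Mazzeo--Vertman heat space. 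Both routes are valid; the reflection argument is more explicit and in principle controls all the lower-order models simultaneously via the image formula (matching the paper's remark that ``in fact, all the models are the same, not just the leading order''), while the locality argument is shorter because it outsources the work to a citation. Your stated alternative --- applying the construction of \S\ref{s:mwb} locally --- is essentially the paper's argument, with one caveat: the parametrix construction of \S\ref{s:mwb} is global (it passes through a Neumann series), so it does not literally ``apply verbatim locally''; what is local is the resulting heat \emph{kernel}, and establishing that locality rigorously is exactly what the cited theorem does.
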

	\begin{proof}This is true because of locality; in fact, all the models are the same, not just the leading order. In any patch of sf away from ff, the spatial variables are near the boundary of $S_{\gamma}$ but bounded away from the corner. Since we are looking at short-time asymptotics, Kac's principle holds: the heat kernel can be approximated to infinite order in $T$ by the heat kernel on a half-plane. To make this precise, we quote \cite[Theorem 3]{luck}; see also \cite{nrs1}.
		Since this works on any patch of sf away from ff, and the models themselves are pc on sf, they must agree on all of sf, including down to ff.
	\end{proof}
	
	Since $H_{S_{\gamma}}$ is pc on the double heat space, its leading order models must be compatible with each other at the intersections. We will use this in the construction of the heat kernel for curvilinear polygonal domains. In particular:
	\begin{corollary}\label{cor:exactconecompatibility} We have the following compatibility conditions for the D-D heat kernel:
		\[\mathcal H_{-2,\ff,DD}|_{\ff\cap \sff}=\mathcal H_{-2,\sff,D}|_{\ff\cap \sff};\ \mathcal H_{-2,\ff,DD}|_{\ff\cap \td}=\mathcal H_{-2,\td}|_{\ff\cap \td}.\]
		An analogous result holds for the N-N heat kernel at both intersections, and for the D-N heat kernel at ff $\cap$ td. Moreover, if sf$_{1}$ is the Dirichlet component of sf and sf$_2$ is the Neumann component of sf, we have the appropriate compatibility conditions for the D-N heat kernel at ff $\cap$ sf:\[\mathcal H_{-2,\ff,DN}|_{\ff\cap \sff_1}=\mathcal H_{-2,\sff_1,D}|_{\ff\cap \sff_1};\ \mathcal H_{-2,\ff,DN}|_{\ff\cap \sff_2}=\mathcal H_{-2,\sff_2,N}|_{\ff\cap \sff_2}.\]
	\end{corollary}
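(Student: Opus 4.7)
The plan is to deduce the corollary directly from Lemma \ref{lem:yesitspc} together with the general compatibility principle for polyhomogeneous conormal functions on a manifold with corners. Specifically, if $f$ is pc on $X$ and $H_1, H_2$ are intersecting boundary hypersurfaces, with leading-order behavior $\rho_{H_i}^{\alpha_i} g_i$ at $H_i$, then the restrictions of $g_1$ and $g_2$ to the corner $H_1 \cap H_2$ must agree in the sense of iterated asymptotic expansions. Indeed, in a coordinate chart near $H_1 \cap H_2$ with boundary defining functions $\rho_1, \rho_2$, the product-type expansion guarantees that $g_1|_{H_1}$ is itself pc on $H_1$ with leading behavior at $H_1\cap H_2$ matching the restriction of $g_2$ to $H_1$ (up to the obvious power of $\rho_1$).

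Applying this to $H_{S_\gamma}$: by Lemma \ref{lem:yesitspc} the kernel is pc on $(S_\gamma)_h^2$, and by the explicit expressions \eqref{eq:modelminus2ffall}, \eqref{eq:modelminus2sfD}, \eqref{eq:modelminus2sfN}, \eqref{eq:modelminus2td}, the leading orders at $ff$, $sf$, and $td$ are all $T^{-2}$. Since the leading powers coincide, the compatibility statement reduces to equality of the leading coefficients on the respective intersection faces, giving the asserted identities for D-D, N-N, and D-N in one stroke.

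For a concrete verification (useful later for reading off explicit formulas), I would do the following. At $ff \cap td$, good coordinates are $(T, R, Y, \theta, \theta')$ where $Y = R - R'$ is bounded and $R \to \infty$. Using the uniform large-argument asymptotic $I_\nu(z) \sim e^z/\sqrt{2\pi z}$ as $z \to \infty$, the series \eqref{eq:heatkernelsectorformulaff} collapses (via completeness of $\{\phi_j\}$ on $[0,\gamma]$) to the Euclidean Gaussian $\tfrac{1}{4\pi}\exp[-\tfrac14(X^2 + Y^2)]$ in rectangular coordinates, which is exactly $\mathcal H_{-2,td}$; this is precisely Kac's principle of not feeling the boundary, applied at the vertex. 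At $ff \cap sf$, one works near a fixed edge point and uses the doubling/reflection interpretation from the proof of Lemma \ref{lem:yesitspc}: in the sf regime only the direct term and its single mirror image contribute to leading order, producing $\mathcal H_{-2,sf,D}$, $\mathcal H_{-2,sf,N}$, or the mixed-condition analogue, according to the boundary condition.

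The main (and essentially only) obstacle is bookkeeping with coordinate systems that are valid uniformly in a neighborhood of each intersection face: near $ff \cap sf$ one must interpolate between the ff-coordinates $(R, R', \theta, \theta')$ and the sf-coordinates $(X, \xi, \xi', x')$ analogous to \eqref{eq:modelcoordssfandtd}, and near $ff \cap td$ between ff-coordinates and td-coordinates. Once a valid coordinate patch on a neighborhood of the intersection is fixed, the pc structure does the rest; no additional analytic input beyond Bessel asymptotics and eigenfunction completeness is required. The N-N case is identical in structure, and the D-N case requires only the extra care of tracking which component $sf_1$ or $sf_2$ is adjacent to which boundary ray $\theta = 0$ or $\theta = \gamma$.
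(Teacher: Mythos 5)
Your main argument is correct and is precisely the paper's argument: compatibility follows formally from the polyhomogeneity of $H_{S_\gamma}$ established in Lemma \ref{lem:yesitspc}, together with the identification of the leading-order models at ff (from the explicit series \eqref{eq:heatkernelsectorformulaff}) and at sf and td (from locality / Kac's principle, in the preceding proposition). The ``concrete verification'' you sketch is extra; the paper deliberately avoids it (the Remark emphasizes that the identities are \emph{not} obvious from the explicit formulas), and your coordinate description near $\mathrm{ff}\cap\mathrm{td}$ would need correction (the angular variables also collapse to a difference there), but this does not affect the validity of your primary, pc-based deduction.
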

	
	\begin{remark} The preceding two results may be of independent interest, as it is not obvious from the explicit expressions of the leading order models that they satisfy these compatibility conditions. 
	\end{remark} 
	
	\subsection{Construction of the heat kernel} As before, let $\Omega$ be a curvilinear polygonal domain, a subdomain of a larger surface $M$. Label its edges $E_j$ and its vertices $V_j$, with $V_j$ connecting $E_{j}$ and $E_{j+1}$ (with the appropriate generalization for multiple connected boundary components, which are allowed). For each $j$, let $\Omega_j$ be a surface with smooth boundary, also a subdomain of $M$, such that $E_{j}$ is a subset of the boundary of $\Omega_j$. Such a surface may always be created. In fact, using the tubular neighborhood theorem, $\Omega_j$ may be chosen to be contained within a small neighborhood (in $M$) of $E_j$.
	
	\begin{figure} \includegraphics[width=0.7\textwidth]{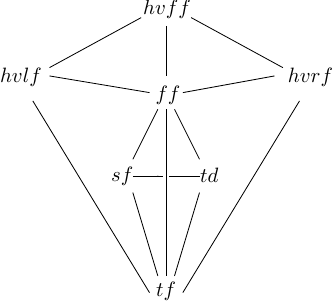} 
		\caption{The lines between boundary faces of the double heat space indicate faces whose boundaries have non-empty intersection. The $E_{j0}$ and $E_{0j}$ faces are omitted for the sake of simplicity. We note that $E_{j0}$ has non-empty intersection with hvrf, sf, and tf, whereas $E_{0j}$ has non-empty intersection with hvlf, sf, and tf. }
		\label{fig:schem} 
	\end{figure} 
	
	\subsection{Dirichlet and Neumann boundary conditions}
	We now construct the heat kernel in the setting where the boundary conditions on each side are either Dirichlet or Neumann, rather than Robin.  Consider the heat space $\Omega_h^2$. We define a kernel $H^{(1)}$ on this heat space by specifying its leading order behavior at various boundary hypersurfaces.  In Figure \ref{fig:schem}, we show the faces of the double heat space whose boundaries have non-empty intersection, and in Figure \ref{fig:zoom}, we zoom in on the double heat space near the intersection of $\overline{V}$ and $E$.  In fact, we will define $H^{(1)}$ on a blown-down version of the heat space, without the faces hvff$_{jk}$. Call this space $\widetilde\Omega_h^2$. By the proof of Proposition \ref{prop:MVheatvsourheat}, the hvff blowup may be done last, so $\widetilde\Omega_h^2$ is in fact a blow-down of $\Omega_h^2$. The reason is that the blowup at hvff is not necessary for the heat kernel for an exact cone, and is not done in Mazzeo-Vertman \cite{MaVe}.  Here we only require this blowup to obtain the composition formula in Theorem \ref{thm:comp}. 
	
	\begin{figure} \includegraphics[width=0.5\textwidth]{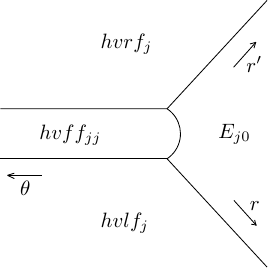} 
		\caption{This is a schematic illustration of the intersection between the side and front faces.}
		\label{fig:zoom} 
	\end{figure} 
	
	First, at td, we require $H^{(1)}$ to have the usual local asymptotic expansion \eqref{eq:ansatzattd}, namely
	\begin{equation}\label{eq:reqtattd}
		H^{(1)}\sim \sum_{j=0}^{\infty}T^{-2+j}\mathcal H_{-2+j,\td}.
	\end{equation}
	Naturally we also ask that $H^{(1)}$ decay to infinite order at tf.
	
	Observe that since $\Omega$ is a subdomain of a smooth manifold $M$, each model $\mathcal H_{-2+j,\td}$ is smooth up to the boundary of $\Omega$, so in particular smooth up to each sf$_j$ and each ff$_j$.
	
	At each side face sf$_j$, we use the heat kernel on $\Omega_j$, in boundary normal coordinates, as a model, where $\Omega_j$ is the surface with boundary defined above. By Theorems \ref{thm:neumannheat} and \ref{thm:dirichletheat}, the heat kernel $H^{\Omega_j}$ is pc on $(\Omega_j)_h^2$, and its expansion at the face sf of $(\Omega_j)_h^2$ may be written
	\[\sum_{k=0}^{\infty}T^{-2+k}\mathcal H^{\Omega_j}_{-2+k,\sff}\]
	for models $\mathcal H^{\Omega_j}_{-2+k,\sff}$ which are pc on sf. Note that the leading order $\mathcal H^{\Omega_j}_{-2,\sff}$ is either $\mathcal H_{-2,\sff,D}$ or $\mathcal H_{-2,\sff,N}$, defined in \eqref{eq:modelminus2sfD} and \eqref{eq:modelminus2sfN}, as appropriate. Now our face sf$_j$ is simply a subdomain of the face sf of $(\Omega_j)_h^2$, namely the restriction of sf to the region where both spatial variables are elements of $E_j\subseteq\partial\Omega_j$. So we simply set
	\begin{equation}\label{eq:reqtatsf}
		H^{(1)}\sim\sum_{k=0}^{\infty}T^{-2+k}\mathcal H^{\Omega_j}_{-2+k,\sff}.
	\end{equation}
	The requirements \eqref{eq:reqtatsf} and \eqref{eq:reqtattd} are compatible since the heat kernel $H^{\Omega_j}$ is pc and has the same models. They are also both compatible with infinite-order decay at tf, and with the appropriate boundary conditions on $E_{j0}$, for the same reason.
	
	Near each face ff$_j$ it is necessary to pick local polar coordinates. Define $r$ and $\theta$ to be the polar-coordinate version of boundary normal coordinates along $E_j$, so that i.e. $x=r\cos\theta$ and $y=r\sin\theta$. In these coordinates, $E_j$ is given by $\theta=0$, and $E_{j+1}$ is given by a curve with angle $\theta=\alpha_j$ at the origin. Of course, we could also have chosen polar coordinates from the boundary normal coordinates along $E_{j+1}$, so that $E_{j+1}$ is precisely $\theta=\alpha_j$ and $E_j$ is a curve with angle $\theta=0$ at the origin. These two choices of polar coordinate systems agree to second order in a neighborhood of $r=0$. With this described, coordinates valid in the interior of ff are \eqref{eq:modelcoordsintff}. Moreover, in these coordinates, we have $R=0$ at hvlf$_j$, $R'=0$ at hvrf$_j$, $\theta=0$ at $E_{j0}$, $\theta=\alpha_j$ at $E_{j+1,0}\cap ff_j$, $\theta'=0$ at $E_{0j}$, and $\theta'=\alpha_j$ at $E_{0,j+1}\cap ff_j$. The face hvff$_j$ is an extra, overblown face at $R=R'=0$.
	
	The point is that at ff$_j$, we can just use one of the models $\mathcal H_{-2,\ff,DD}$, $\mathcal H_{-2,\ff,DN}$ (or its flipped variant $\mathcal H_{-2,\ff,ND}$) or $\mathcal H_{-2,\ff,NN}$, depending on which boundary conditions we are imposing on $E_j$ and $E_{j+1}$. These models are defined in \eqref{eq:modelminus2ffall}. So we require that at each ff$_j$,
	\begin{equation}\label{eq:reqtatff}
		H^{(1)}\sim T^{-2}\mathcal H_{-2,\ff,DD}(R,\theta,R',\theta'),
	\end{equation}
	with DD replaced by DN, ND, or NN depending on the boundary conditions. Since the Laplacian is equal to the Laplacian for a straight sector to leading order at ff, these models solve the model problem at each ff$_j$.
	
	By Corollary \ref{cor:exactconecompatibility}, the requirement \eqref{eq:reqtatff} is compatible with \eqref{eq:reqtattd} and \eqref{eq:reqtatsf} at td and at sf$_j$. At sf$_{j+1}$, it also follows from Corollary \ref{cor:exactconecompatibility}, because even though we no longer have an exact cone and thus the boundary normal coordinates for $E_{j+1}$ do not agree with the polar coordinates $(r,\theta)$ everywhere, these two coordinate systems do agree to second order in $\rho_{ff_j}$. A similar argument, from the exact cone condition, shows that \eqref{eq:reqtatff} is compatible with the appropriate boundary conditions at $E_{j0}$ and $E_{j+1,0}$. 
	
	Since \eqref{eq:reqtatff} is identical to the model for the exact sectorial heat kernel, this is also compatible with $H^{(1)}$ being pc on $\widetilde\Omega_h^2$ rather than $\Omega_h^2$.
	
	The point of checking compatibility is that as a result, we know that there exists a kernel $H^{(1)}$, pc on $\widetilde\Omega_h^2$, with the expansions \eqref{eq:reqtattd}, \eqref{eq:reqtatsf}, and \eqref{eq:reqtatff} at the boundary hypersurfaces td, sf$_j$, and ff$_j$ respectively, and which decays to infinite order at tf and satisfies the appropriate boundary conditions at each $E_{j0}$. If we let $\nu_{0,j}$ be the smallest eigenvalue of the appropriate cross-sectional Laplacian in $\theta$ at each component ff$_j$ 
	(note that in the Neumann-Neumann case we have $\nu_{0,j}=0$, and otherwise $\nu_{0,j}>0$), then the leading orders of $H^{(1)}$ may be chosen as follows:
	\begin{itemize}
		\item $-2$ at td, sf$_j$, and ff$_j$, with only integer powers in the expansion;
		\item $0$ at $E_{j0}$ and $E_{0j}$ for each $j$, with only integer powers in the expansion; and
		\item $\nu_{0,j}$ at hvlf$_j$ and hvrf$_j$, with other fractional powers.
	\end{itemize}
	As in the case of manifolds with boundary, we consider $P^{(1)}:=t\mathcal L H^{(1)}$. It is pc on $\widetilde\Omega_h^2$. Since both $\Delta H^{(1)}$ and $\partial_t H^{(1)}$ satisfy the boundary conditions (by the eigenfunction expansion, the Laplacian preserves the boundary conditions when it is applied), so does $P^{(1)}$. The leading orders of $P^{(1)}$ are as follows:
	\begin{itemize}
		\item $\infty$ at td and at each sf$_j$, since we have solved the model problem to all orders;
		\item $-1$ at ff$_j$, since we have solved the model problem to one order;
		\item $0$ at $E_{0j}$, and $\nu_{0,j}$ at hvrf$_j$, since the lift of $t\mathcal L$ is tangent to these hypersurfaces;
		\item $0$ at $E_{j0}$, since $t\mathcal L$ decreases the index set by 2, but not when applied to a \emph{smooth} expansion; and
		\item $\nu_{0,j}-1$ at hvlf$_j$, since $t\mathcal L$ decreases the index set by 2 at this face but the leading term is killed, as in Mazzeo-Vertman \cite{MaVe}. The leading order term of the Laplacian is the same as for the flat Laplacian, and our model is the flat heat kernel there. Technically this requires us to choose $H^{(1)}$ to be equal to \eqref{eq:reqtatff} in a neighborhood of hvlf$_j$, which we can do. Note this is compatible with the boundary condition at $E_{j0}$ as well.
	\end{itemize}
	
	We now construct an improved parametrix which has error decaying to infinite order at both hvlf and $E_{j0}$. We do this in two steps, first eliminating the error at hvlf. This proceeds exactly as in Mazzeo-Vertman \cite{MaVe}, as hvlf is the analogue of their face rf. In the interior of hvlf, $r$ is a boundary defining function for hvlf; the other variables are $\theta\in[0,\alpha_j]$, $T$, and $z'\in\Omega$. To remove a term $r^{\gamma}a(\theta,t,z')$ in the expansion of $P^{(1)}$ at hvlf$_j$, we need to solve the indicial equation on the cone $C([0,\alpha_j])$ in $(r,\theta)$, with $t$ and $z'$ as parameters:
	\[\left(-\partial_{rr}-\frac{1}{r^2}(\partial_{\theta\theta}+\frac 14)\right)u(r,\theta,T,z')=r^{\gamma}T^{-2}a(\theta,T,z'),\]
	with the appropriate boundary conditions at $\theta=0$ and $\theta=\alpha_j$. Since $a$ is a term in the expansion of $P^{(1)}$, it, itself, satisfies those boundary conditions. Therefore, as in \cite{MaVe} and \cite{etdeo1} a solution $u$ exists with asymptotic behavior at $r=0$ given by either $r^{\gamma+2}$ or possibly $r^{\gamma+2}\log r$ in case of an unlucky indicial root coincidence. The dependence in $t$ and $z'$ is purely parametric, so $u(r,\theta,T,z')$ is pc in a neighborhood of hvlf$_j$. We multiply $u(r,\theta,T,z')$ by a cutoff function equal to 1 on a neighborhood of hvlf$_j$, choosing the cutoff function so that its gradient is parallel to each edge $E_{j0}$ and thus preserves the boundary conditions at $\theta=0$ and $\theta=\alpha_j$. Then subtracting this product from $H^{(1)}$ eliminates the term $r^{\gamma}a(\theta,T,z')$ in the expansion at hvlf and does not change the leading order of $H^{(1)}$ at any other boundary hypersurface. In particular, since hvlf does not intersect td or sf, the expansion of $H^{(1)}$ there is unchanged. Moreover, $r$ vanishes at ff, so $u$ actually decays to the same order as $a$ at ff, and thus the leading order of the expansion of $H^{(1)}$ at ff is unchanged.
	
	Iterating this process produces a parametrix $H^{(2a)}$ and an error $P^{(2a)}$ with all the same properties as $H^{(1)}$ and $P^{(1)}$, except for two differences. First, there may be logarithmic terms at ff$_j$ (as well as at hvlf$_j$) once we go one order down in the expansion. Second, $P^{(2a)}$ now vanishes to infinite order at hvlf$_j$ for each $j$.
	
	To remove the error at $E_{j0}$, we follow the same template as for manifolds with boundary, using boundary ellipticity. Namely, if $y$ is the boundary normal coordinate for the side $\Omega_j$, add a kernel which is equal to
	\[\frac 12y^2\left(\mathcal LH^{(2a)}\right)(T,x,0,x',y')+ O(y^3)\]
	and supported in a neighborhood of $E_{j0}$. This improves the order of the error at $E_{j0}$ from 0 to 1. Note also that $\mathcal LH^{(2a)}$ vanishes to infinite order at sf$_j$, so there is no effect on the expansion at sf$_j$. Iterating this process and taking an asymptotic sum, as for manifolds with boundary, we obtain the following.
	\begin{proposition}There exists a kernel $H^{(2)}$ pc on $\widetilde\Omega_h^2$, satisfying the appropriate combination of Dirichlet and Neumann boundary conditions, with $\lim_{t\to 0}H^{(2)}=\delta(z-z')$, where if we let $P^{(2)}=t\mathcal LH^{(2)}$,
		\begin{itemize}
			\item $H^{(2)}$ vanishes to infinite order at tf and has the full expansions \eqref{eq:reqtattd} and \eqref{eq:reqtatsf} at td and each sf$_j$ respectively;
			\item $H^{(2)}$ has leading term given by \eqref{eq:reqtatff} at each ff$_j$, with the next term being one full order lower (possibly logarithmic);
			\item $P^{(2)}$ vanishes to infinite order at tf, td, each sf$_j$, each hvlf$_j$, and each $E_{j0}$;
			\item $P^{(2)}$ has leading order $-1$ at each ff$_j$, $0$ at each $E_{0j}$, and $\nu_{0,j}$ at each hvrf$_j$.
		\end{itemize}
	\end{proposition}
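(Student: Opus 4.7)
The plan is to produce $H^{(2)}$ as a two-stage correction of the parametrix $H^{(1)}$ already built, killing the error $P^{(1)}=t\mathcal L H^{(1)}$ first at every face hvlf$_j$ and then along every edge $E_{j0}$, via Borel summation of explicit correctors. The correctors at each stage will be supported in a cutoff neighborhood of the offending face and will vanish to infinite order at the faces where $H^{(1)}$ and $P^{(1)}$ are already in the right form. Consequently the full expansions at td and at each sf$_j$, as well as the infinite-order decay at tf, will be preserved, and the leading model $\cH_{-2,\mathrm{ff}}$ at each ff$_j$ will be unchanged (the next term may pick up a logarithm). The initial condition $\lim_{t\to 0}H^{(2)}=\delta(z-z')$ is inherited through the unchanged td and sf expansions.

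For the hvlf$_j$ stage, I work in the polar coordinates $(r,\theta)$ adapted to $V_j$, in which hvlf$_j$ is cut out by $r=0$ away from $t=0$, following the Mazzeo--Vertman template of \cite{MaVe}. Each term $r^{\gamma}(\log r)^p a(\theta,t,z')$ in the pc expansion of $P^{(1)}$ at hvlf$_j$ satisfies the prescribed boundary conditions at $\theta=0,\alpha_j$, because $P^{(1)}$ does, so I solve the indicial equation
\begin{equation*}
\bigl(-\partial_{rr}-r^{-2}\partial_{\theta\theta}\bigr)u(r,\theta,t,z')= t^{-1} r^{\gamma}(\log r)^p a(\theta,t,z'),
\end{equation*}
with Dirichlet/Neumann conditions in $\theta$ and parametric dependence in $(t,z')$; the solution has leading behavior $r^{\gamma+2}(\log r)^p$, with one extra power of $\log r$ only when $\gamma+2$ coincides with an indicial root. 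I Borel-sum these correctors in $\gamma$, multiply by a cutoff supported near hvlf$_j$ whose gradient is parallel to both adjacent edges (so Dirichlet/Neumann conditions survive), and subtract from $H^{(1)}$. Because $r$ vanishes at ff$_j$, each corrector is strictly subleading at ff$_j$, and because hvlf$_j$ is disjoint from td, sf$_j$, and tf, the expansions there are undisturbed. Calling the result $H^{(2a)}$, the new error $P^{(2a)}$ now vanishes to infinite order at every hvlf$_j$.

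For the $E_{j0}$ stage, I use boundary ellipticity of $t\mathcal L$ in the direction normal to $E_{j0}$. In boundary normal coordinates $(x,y)$ along $E_j$, $P^{(2a)}$ has a smooth Taylor expansion $\sum_{k\ge 0}y^k b_k(t,x,x',y')$ at $E_{j0}$, with $b_k$ inheriting the decay properties of $P^{(2a)}$ at sf$_j$, hvlf$_j$, and $t=0$. I inductively choose a corrector of the form $\tfrac12 y^2 b_0(t,x,x',y')+O(y^3)$ so that $t\mathcal L$ applied to it cancels the $y^0$ term in $P^{(2a)}$ at $E_{j0}$, then iterate to strip each successive $y^k$-coefficient. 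Borel-summing the sequence and localizing with a cutoff supported near $E_{j0}$ but strictly away from $r=0$ gives $H^{(2)}$, with $P^{(2)}$ now vanishing to infinite order at $E_{j0}$ as well. The correctors trivially respect the boundary condition on $E_{j0}$ (vanishing for Dirichlet, with no $y^1$-term for Neumann) and, because $b_k$ vanishes to infinite order at sf$_j$ and hvlf$_j$, introduce nothing new there. The remaining leading orders of $P^{(2)}$, namely $-1$ at each ff$_j$, $0$ at each $E_{0j}$, and $\nu_{0,j}$ at each hvrf$_j$, are inherited from $P^{(1)}$, since $t\mathcal L$ is tangent to hvrf$_j$ and $E_{0j}$ and the correctors vanish there.

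The main obstacle will be the simultaneous compatibility of the two cutoff constructions with the boundary conditions near each vertex. The hvlf$_j$-cutoff must be gradient-parallel to both $E_{j0}$ and $E_{j+1,0}$ where they meet the vertex face, and the $E_{j0}$-cutoff must be supported away from $r=0$ so as not to reopen the hvlf$_j$ expansion that the first stage has closed. A second delicate point is bookkeeping the index sets at ff$_j$: each hvlf$_j$-corrector contributes $r^{\gamma+2}$ with possible logs, and after Borel summation these yield exactly the claimed next-order-with-possible-log behavior at ff$_j$. Once these coordination issues are dispatched, convergence of the two Borel sums, preservation of polyhomogeneity on $\widetilde\Omega_h^2$, and invariance of the prescribed leading models all follow from the standard geometric microlocal arguments used in \cite{MaVe} and \cite{grieser}.
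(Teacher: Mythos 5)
Your proposal follows the paper's two-stage template exactly: first kill the error at each hvlf$_j$ by solving indicial equations \`a la Mazzeo--Vertman with a cutoff whose gradient is parallel to the adjacent edges, then kill the error at each $E_{j0}$ by boundary ellipticity, with Borel summation at both stages. The structure, the localization, the preservation of boundary conditions, and the bookkeeping of leading orders all match the paper's construction. Two small technical points are worth flagging. First, the indicial operator you write at hvlf$_j$, namely $-\partial_{rr}-r^{-2}\partial_{\theta\theta}$, is neither the polar-coordinate cone Laplacian nor its half-density conjugate; the correct model operator in the Mazzeo--Vertman framework the paper uses is the conjugate $r^{1/2}\Delta\, r^{-1/2}$, which equals
\[
-\partial_{rr}-\frac{1}{r^2}\Bigl(\partial_{\theta\theta}+\frac{1}{4}\Bigr),
\]
and the $\tfrac14$-shift changes the indicial roots from $\tfrac12\pm\sqrt{\mu_j^2+\tfrac14}$ to $\tfrac12\pm\mu_j$, hence it changes exactly the set of $\gamma$ at which an extra $\log r$ appears. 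Since tracking where logarithms arise (at ff$_j$, one order down) is one of the claims of the proposition, you should use the conjugated operator. Second, your assertion that hvlf$_j$ is disjoint from tf is incorrect: the two faces meet (at $t=0$ with $z'$ on the blown-up vertex and $z$ elsewhere); the paper only claims disjointness from td and sf. The conclusion you want --- that the hvlf correctors preserve infinite-order vanishing at tf --- still holds, but for the different reason that the corrector $u(r,\theta,t,z')$ depends on $t$ only parametrically through the coefficient $a(\theta,t,z')$, which vanishes to infinite order at tf because $P^{(1)}$ does.
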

	
	We will need to compose, so we will blow up to pass to $\Omega_h^2$ by creating hvff$_{jk}$.
	\begin{corollary} The kernels $H^{(2)}$ and $P^{(2)}$ also lift to be pc on $\Omega_h^2$, with leading orders $2\nu_{j,0}$ and $\infty$ respectively at hvff$_{jk}$.
	\end{corollary}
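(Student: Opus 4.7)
The plan is a short bookkeeping argument using blowup calculus; no new analytic estimates are required because the construction of $H^{(2)}$ and $P^{(2)}$ has already been completed on $\widetilde{\Omega}_h^2$. From Proposition \ref{prop:MVheatvsourheat} and its proof, the passage from $\widetilde{\Omega}_h^2$ to $\Omega_h^2$ is accomplished by normally blowing up the p-submanifolds $\overline{\mathrm{hvlf}_j \cap \mathrm{hvrf}_k}$ for each pair $(j,k)$. These are disjoint from td and sf and are nested inside the relevant corners of $\widetilde{\Omega}_h^2$, so the blowups are legitimate p-submanifold blowups. The corresponding blow-down map $\beta \colon \Omega_h^2 \to \widetilde{\Omega}_h^2$ is, by Proposition \ref{prop:blowupfacts}(2), a b-submersion, under which pullback preserves polyhomogeneity. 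Hence $\beta^* H^{(2)}$ and $\beta^* P^{(2)}$ are pc on $\Omega_h^2$, with unchanged index sets at every face that already existed on $\widetilde{\Omega}_h^2$.

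The remaining task is to identify the leading order at each newly created $\hvff_{jk}$. Working locally near a point of $\mathrm{hvlf}_j \cap \mathrm{hvrf}_k$ in $\widetilde{\Omega}_h^2$, let $\rho_L$ and $\rho_R$ be boundary defining functions for $\mathrm{hvlf}_j$ and $\mathrm{hvrf}_k$ respectively. Since $H^{(2)}$ is pc, its expansion at this corner has the product-type form, with leading term of order $\rho_L^{\nu_{0,j}}\rho_R^{\nu_{0,k}}$. Under the normal blowup of $\{\rho_L = \rho_R = 0\}$, one has $\rho_L = \rho_{\hvff_{jk}}\sigma_L$ and $\rho_R = \rho_{\hvff_{jk}}\sigma_R$, where $(\sigma_L,\sigma_R)$ parametrizes the resulting projective sphere, so the lift of $H^{(2)}$ acquires leading order $\rho_{\hvff_{jk}}^{\nu_{0,j}+\nu_{0,k}}$ at $\hvff_{jk}$; in the diagonal case $j=k$ this is precisely $2\nu_{0,j}$, agreeing with the notation in the statement (and the general statement being read as $\nu_{0,j}+\nu_{0,k}$). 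The identical calculation for $P^{(2)}$ gives infinite-order vanishing at every $\hvff_{jk}$, since $P^{(2)}$ already vanishes to infinite order at each $\mathrm{hvlf}_j$ by the preceding proposition, and $\infty + \nu_{0,k} = \infty$.

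The only step that demands a moment of thought is invoking the product structure of the corner expansion at $\mathrm{hvlf}_j \cap \mathrm{hvrf}_k$ (as opposed to separate one-sided expansions that might not multiply compatibly). But this product-type behavior at corners is part of the very definition of polyhomogeneity on a manifold with corners recalled in Section 3.1, so it is immediate rather than a hidden obstacle. Consequently the corollary follows at once.
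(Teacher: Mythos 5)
Your argument is correct, and it is the natural (indeed essentially the only) one, since the paper states this as a corollary with no proof supplied. The three ingredients you use — that $\Omega_h^2$ is obtained from $\widetilde\Omega_h^2$ by blowing up the corners $\mathrm{hvlf}_j\cap\mathrm{hvrf}_k$; that the blow-down map is a b-map so the pullback theorem preserves polyhomogeneity; and that the exponent matrix of the blow-down map at the new face $\mathrm{hvff}_{jk}$ has entries $1$ against both $\mathrm{hvlf}_j$ and $\mathrm{hvrf}_k$, so the lifted index set there is the sum of the two original index sets — are exactly what is needed, and you apply them properly. For $P^{(2)}$ the proposition preceding the corollary gives infinite order vanishing at each $\mathrm{hvlf}_j$, so the sum is $\infty$ regardless of the behavior at $\mathrm{hvrf}_k$, as you say.

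Two small remarks. First, you are right that the sum-of-index-sets computation actually yields $\nu_{0,j}+\nu_{0,k}$ at $\mathrm{hvff}_{jk}$, which is $2\nu_{0,j}$ only when $j=k$; the corollary's ``$2\nu_{j,0}$'' (note also the transposed subscript, apparently a typo for $\nu_{0,j}$) is either shorthand for the diagonal case — which is the only one that matters for the trace — or a mild imprecision, and your parenthetical reading is the correct one. Second, strictly speaking the pullback theorem gives the index set as the sum $E_{\mathrm{hvlf}_j}+E_{\mathrm{hvrf}_k}$, whose infimum is $\nu_{0,j}+\nu_{0,k}$; this bounds the leading order from below, which is what ``leading order $2\nu_{0,j}$'' means in this context (membership in $\mathcal{A}^{2\nu_{0,j},\dots}$). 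Your phrasing in terms of the joint leading term $\rho_L^{\nu_{0,j}}\rho_R^{\nu_{0,k}}$ is a harmless shortcut for the same thing, and the product-type structure you appeal to is indeed part of the definition of polyhomogeneity on a manifold with corners, so no hidden obstacle lurks there.
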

	Now we eliminate the last error by forming the formal Neumann series
	\[\Id+P^{(3)}:=\Id-\sum_{k=1}^{\infty}\left(-\frac{1}{T^2}P^{(2)}\right)^k.\]
	Note that $T^{-2}P^{(2)}$ vanishes to infinite order at all faces except for ff$_j$, $E_{0j}$, and hvrf$_j$, where it has leading orders $-3$, $0$, and $\nu_{0,j}$ respectively. We use Theorem \ref{thm:comp} to analyze the power $(-T^{-2}P^{(2)})^{k}$. We see immediately that it also vanishes to infinite order at all other faces and has leading order $-4+k$ at ff$_j$. At hvrf$_j$, the index sets have an inductive relationship: the index set for the $k^{th}$ power is the extended union of the index set for the $(k-1)^{st}$ power with $k$ plus the index set for the $0^{th}$ power. The union of all of these is indeed a legitimate index set.  In particular, there are only a finite number of extended unions involved at order less than $s$ for each value of $s$.  The leading order is $\nu_{0,j}$, and there is no logarithmic term at that leading order.   At $E_{0j}$, the index set is the same as that for $P^{(2)}$, with leading order zero.
	
	All of this allows us to asymptotically sum the Neumann series, and as before, the sum is convergent. 
	The sum $P^{(3)}$ has the same leading orders as $P^{(2)}$ at each boundary hypersurface. As before, we let
	\[H^{(3)}=H^{(2)}\left(\Id+P^{(3)}\right),\]
	and deduce that $H^{(3)}$ is the true heat kernel.
	
	By Theorem \ref{thm:comp}, the term $H^{(2)}P^{(3)}$ vanishes to infinite order at tf, td and sf, with leading order $-1$ at ff$_j$, so it does not affect the expansion at td and sf and does not affect the first term at ff$_j$. It has leading order greater than or equal to zero everywhere else, with no logarithmic terms. This tells us the following:
	\begin{theorem}\label{thm:5point8}
		The heat kernel for $\Omega$, with Dirichlet or Neumann boundary conditions on each side $E_j$, is pc on $\Omega_h^2$, vanishing to infinite order at tf and continuous down to all boundary hypersurfaces except for td, sf$_j$, and ff$_j$.
		
		Its full expansions at td and each sf$_j$ are \eqref{eq:reqtattd} and \eqref{eq:reqtatsf}, which are the same as those for a closed manifold, and a manifold with boundary and the appropriate boundary condition, respectively.
		
		Its expansion at ff$_j$ has leading term \eqref{eq:reqtatff} and no other terms within one order.
	\end{theorem}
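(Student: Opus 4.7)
The plan is to follow the standard geometric microlocal construction, which parallels the manifold-with-boundary case from Section 4 but must be adapted to handle the vertex faces ff$_j$, hvlf$_j$, hvrf$_j$, and hvff$_{jk}$ that appear when the polygonal structure is present. First I would construct an initial parametrix $H^{(1)}$, polyhomogeneous conormal on the partially blown-down heat space $\widetilde\Omega_h^2$ (which omits the hvff$_{jk}$ blow-ups), by specifying its leading order models at each face: the full Euclidean expansion \eqref{eq:reqtattd} at td, the manifold-with-boundary expansion \eqref{eq:reqtatsf} at each sf$_j$ (imported from the model surface $\Omega_j$ constructed in Section 4), and the exact-sector heat kernel model \eqref{eq:reqtatff} at each ff$_j$. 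The key justification for the existence of such a kernel is the compatibility of these models at the intersections of boundary hypersurfaces; this is precisely Corollary \ref{cor:exactconecompatibility} at ff$\cap$td and ff$\cap$sf, and the local nature of short-time heat asymptotics handles the other intersections. Near each ff$_j$ one chooses polar coordinates adapted to one of the two incident edges; the fact that these two polar systems agree to second order in $\rho_{ff_j}$ ensures no conflict with the sf models on the opposite side.

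Next I would compute $P^{(1)}:=t\mathcal L H^{(1)}$ and verify that its leading orders at the various faces are as indicated: $\infty$ at td, tf and each sf$_j$ (since the model problems there have been solved to all orders); $-1$ at each ff$_j$ (since the exact-sector model solves the flat Laplacian to leading order and $t\mathcal L$ differs from $t\mathcal L_{flat}$ by an operator lower order at ff); and $\nu_{0,j}-1$ at each hvlf$_j$ (the leading-order term being killed because the model is exactly the flat kernel near hvlf, as in \cite{MaVe}). The preservation of boundary conditions by $t\mathcal L$ follows from the eigenfunction decomposition in $\theta$.

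The two remaining defects of $H^{(1)}$ are at hvlf$_j$ and at $E_{j0}$. At hvlf$_j$ I would iteratively solve the indicial equation on the cone $C([0,\alpha_j])$ -- which gives a shift of two orders in $r$ per iteration, with at most logarithmic correction at indicial root coincidences -- and subtract a cutoff-localized solution, following the scheme of Mazzeo-Vertman and the analogous construction in \cite{etdeo1}. At $E_{j0}$ I would use boundary ellipticity in $y$ exactly as in Proposition \ref{prop:improvementatrf}, iteratively improving the vanishing order at that face. Asymptotically summing via Borel's lemma produces $H^{(2)}$ pc on $\widetilde\Omega_h^2$, with $P^{(2)}:=t\mathcal L H^{(2)}$ now vanishing to infinite order at tf, td, sf$_j$, hvlf$_j$, and $E_{j0}$, leaving only leading orders $-1$ at ff$_j$, $0$ at $E_{0j}$, and $\nu_{0,j}$ at hvrf$_j$. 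One then lifts to $\Omega_h^2$ (which only adds the hvff$_{jk}$ faces, at which $H^{(2)}$ and $P^{(2)}$ have leading orders $2\nu_{0,j}$ and $\infty$) so that the composition Theorem \ref{thm:comp} is available.

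Finally I would invert $(\Id+T^{-2}P^{(2)})$ via the formal Neumann series $\Id+P^{(3)}:=\Id-\sum_{j\geq 1}(-T^{-2}P^{(2)})^j$ and set $H^{(3)}:=H^{(2)}(\Id+P^{(3)})$. By the uniqueness of the heat kernel (both the delta initial condition and the boundary conditions being preserved at each step), $H^{(3)}$ coincides with the actual heat kernel. Reading off leading orders from Theorem \ref{thm:comp} shows that $H^{(2)}P^{(3)}$ vanishes to infinite order at tf, td and sf and has order $-1$ at ff$_j$, hence the claimed expansions at td, sf$_j$, and the leading term at ff$_j$ are inherited from $H^{(2)}$. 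The main obstacle in this plan is the combinatorial bookkeeping in the Neumann series: one must track index sets at hvrf$_j$, hvff$_{jk}$, and ff$_j$ under iterated composition and verify that the extended unions stabilize to a legitimate index set with finitely many terms below any given order, and that the asymptotic sum actually converges rather than merely formally; this parallels the analogous step in \cite[\S 7]{tapsit} and \cite{MaVe} but must be done carefully with the additional vertex faces.
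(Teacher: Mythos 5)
Your proposal follows the paper's construction step for step: the same initial parametrix on $\widetilde\Omega_h^2$ with models from \eqref{eq:reqtattd}, \eqref{eq:reqtatsf}, \eqref{eq:reqtatff} and compatibility via Corollary \ref{cor:exactconecompatibility}; the same two-stage improvement at hvlf$_j$ (indicial equation) and $E_{j0}$ (boundary ellipticity); the same lift to $\Omega_h^2$ for composition; and the same Neumann-series argument read through Theorem \ref{thm:comp}. This matches the paper's proof in substance and in detail, including the bookkeeping of index sets at hvrf$_j$ under iterated composition.
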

	
	\begin{remark}
		It is certainly possible to push through the composition formula and compute the index sets of the heat kernel for $\Omega$ at other faces (hvff, hvrf, hvlf). However, we do not think that the results obtained in this fashion are optimal -- there are quite a lot of log terms which may not actually exist -- so we omit the statements.  In fact, it is possible that the off-diagonal faces for positive time, namely hvff$_{jk}$ for $j\neq k$, are not necessary at all, but our results are easier and likely quicker to prove this way.  
	\end{remark}
	
	The following corollary is a version of Kac's principle of not feeling the boundary for the Dirichlet boundary condition \cite{kac}; see also \cite{nrs1} for the Neumann and Robin boundary conditions.  
	\begin{corollary} The full expansions at td, sf$_j$, and ff$_j$ are local, in the sense that if two domains with corners $\Omega$ and $\Omega'$ are isometric in a region $R$, then the expansions at the corresponding faces of the heat spaces $\Omega_h^2$ and $(\Omega')_h^2$ agree to all orders when the spatial variables are restricted to lie within the interior of $R$.
	\end{corollary}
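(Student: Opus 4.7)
The plan is to exploit the local nature of the parametrix construction developed in \S\ref{s:mwb} and \S\ref{s:construction}. The models driving that construction are local in the strongest sense: the Euclidean model $\cH_{-2+j,td}$ depends only on the metric germ at a single interior point; the half-plane model $\cH^{\Omega_j}_{-2+k,sf}$ depends only on the boundary normal germ near a point of $E_j$; and the sector model $\cH_{-2,ff,*}$ at ff$_j$ depends only on the interior angle and the metric germ at $V_j$. The iterative refinements used to pass from $H^{(1)}$ to $H^{(2)}$, namely the indicial-equation solutions at hvlf$_j$ and the boundary-ellipticity corrections at $E_{j0}$, are produced by operators that act in a neighborhood of the face in question together with cutoff functions that may be chosen to have arbitrarily small support.

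Given an isometry $\iota$ identifying $\Omega$ and $\Omega'$ on the region $R$, my first step would be to construct the two parametrices $H^{(2)}_\Omega$ and $H^{(2)}_{\Omega'}$ using identical local data over $R$ and $\iota(R)$. The model terms at td, sf$_j$, and ff$_j$ can be taken to be literally the same whenever the base point lies in the interior of $R$, and all cutoff functions used in the iterative corrections can be chosen to be transported under $\iota$. It then follows by construction that $\iota^\ast H^{(2)}_{\Omega'} = H^{(2)}_\Omega$ on the portion of the heat space lying over the interior of $R\times R\times[0,1)$, and similarly for the error terms $P^{(2)} := t\cL H^{(2)}$.

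Second, I would transfer this agreement to the true heat kernel $H^{(3)} = H^{(2)}(\Id + P^{(3)})$ via the composition rule, Theorem~\ref{thm:comp}. Since $P^{(2)}$ vanishes to infinite order at td, each sf$_j$, each hvlf$_j$, and each $E_{j0}$, every term in the Neumann series defining $P^{(3)}$ inherits this vanishing, and Theorem~\ref{thm:comp} then forces $H^{(2)} P^{(3)}$ to vanish to infinite order at td and at each sf$_j$. Hence the full pc expansion of $H^{(3)}$ at td and at each sf$_j$ coincides with that of $H^{(2)}$, which is manifestly local, establishing the corollary at those faces.

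The main obstacle is ff$_j$, since only the leading term of the expansion of $H^{(3)}$ there is supplied directly by $H^{(2)}$; the subsequent terms involve $H^{(2)} P^{(3)}$ and therefore, a priori, the geometry of $\Omega$ away from $V_j$. To handle this I would argue by induction on the order in the ff$_j$-expansion, using the quantitative index sets supplied by Theorem~\ref{thm:comp}. The key point is that any contribution to $P^{(3)}$ originating away from a neighborhood of $V_j$ must propagate through composition with pieces of $P^{(2)}$ that are supported near a distinct vertex, edge, or td-face, and on each such face $P^{(2)}$ vanishes to infinite order; iterating the index-set arithmetic of Theorem~\ref{thm:comp} then yields infinite-order suppression of such contributions at ff$_j$. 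Combined with the equality of the parametrices over $R$, this shows that every coefficient in the ff$_j$-expansion of $H^{(3)}$ is determined by the germ of the data of $\Omega$ at $V_j$, completing the proof.
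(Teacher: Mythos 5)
Your overall route is the same as the paper's: show that the constructed parametrix $H^{(2)}$ is local, then use the composition formula to transfer locality of the expansions from $H^{(2)}$ to $H^{(3)}$. For td and sf$_j$ your reasoning is exactly the argument in the paper and is correct. The ff$_j$ case is where your proposal drifts.

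Your proposed suppression mechanism is stated as: any contribution to $P^{(3)}$ originating away from $V_j$ must ``propagate through composition with pieces of $P^{(2)}$ that are supported near a distinct vertex, edge, or td-face, and on each such face $P^{(2)}$ vanishes to infinite order.'' That last clause is not true: $P^{(2)}$ has \emph{finite} leading order at ff$_k$ (order $-1$), at hvrf$_k$ (order $\nu_{0,k}$), and at $E_{0k}$ (order $0$), for \emph{every} $k$, including $k\neq j$. So the claimed suppression cannot come from vanishing at the other vertex and right-edge faces. The mechanism the paper actually uses is that the triple-space analysis underlying Theorem~\ref{thm:comp} already identifies which faces of $M^3_{rh,c}$ are pushed forward by $\Pi_C$ to ff$_j$: they are $F_{\mathcal O V_jV_jV_j}$ and $F_{\mathcal O V_j 0 V_j}$. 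At the second of these, the lifted product vanishes to infinite order because $\Pi_L^*K_A$ and $\Pi_R^*K_B$ pick up the index set at tf (together with hvrf$_j$/hvlf$_j$ respectively), and $P^{(2)}$ is $\mathcal O(t^\infty)$ at tf. Thus the only finite-order contribution to ff$_j$ comes from $F_{\mathcal O V_jV_jV_j}$, whose pullback data by $\Pi_L$ and $\Pi_R$ are exactly the ff$_j$-expansions of $A$ and $B$. This is what the paper means by ``by the composition theorem, their expansions at ff$_j$ only depend on the expansion of $P^{(2)}$ itself at ff$_j$.'' Once you make the suppression argument precise in this way, the induction you propose is superfluous; the conclusion follows directly (and then once more via composition for $H^{(3)}=H^{(2)}(\Id+P^{(3)})$). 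So the strategy is right, but the justification of suppression at ff$_j$ as you wrote it would not stand; replace it with the triple-space face bookkeeping above.
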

	Note that these are all the faces in the lift of $\{t=0\}$ where the heat kernel has nontrivial behavior, so the statement implies that any global contribution to the heat kernel at $t=0$ is  $O(t^{\infty})$.
	\begin{proof} The corollary follows immediately for td and sf$_j$ from the construction, since the expansions there are the same as for $H^{(2)}$. For ff$_j$, it is also true: although the powers $(T^{-2}P^{(2)})^j$ are compositions and thus not local, by the composition theorem, their expansions at ff$_j$ only depend on the expansion of $P^{(2)}$ itself at ff$_j$, which \emph{is} local. Thus the expansion of $P^{(3)}$ at ff$_j$ is local, and using the composition theorem again, so is the expansion of the true heat kernel. \end{proof}
	
	\subsection{Robin boundary conditions}
	
	The construction of the Robin heat kernel proceeds very similarly to that of the Neumann heat kernel, though the boundary condition is somewhat more complicated. For each edge $E_j$, let $\kappa_j(x)$ be a smooth function on $E_j$. The key lemma is as follows.
	
	\begin{lemma}\label{lem:H1Robin} There exists a kernel $H^{(1)}_{Robin}$, pc on $\widetilde \Omega_h^2$ and with $\lim_{t\to 0}H^{(2)}=\delta(z-z')$, such that, letting $P^{(1)}_{Robin}=H^{(1)}_{Robin}$,
		\begin{itemize}
			\item $H^{(1)}_{Robin}$ satisfies Robin boundary conditions with parameter $\kappa_j(x)$ on each edge $E_j$;
			\item $H^{(1)}_{Robin}$ vanishes to infinite order at tf and has the full expansion \eqref{eq:reqtattd} at td;
			\item At each sf$_j$, $H^{(1)}_{Robin}$ has the same full expansion as the Robin heat kernel on $\Omega_j$, with a parameter agreeing with $\kappa_j(x)$ upon restriction to $E_j$;
			\item At each ff$_j$, $H^{(1)}_{Robin}$ has the same leading term \eqref{eq:reqtatff} as in the Dirichlet, Neumann, or mixed cases, using the Neumann model at every Robin edge; and
			\item $P^{(1)}_{Robin}$ has leading orders $\infty$ at td and at sf$_j$, $-1$ at ff$_j$, $0$ at $E_{0j}$ and $E_{j0}$, $\nu_{j,0}$ at hvrf$_j$, and $\nu_{j,0}-1$ at hvlf$_j$.
		\end{itemize}
	\end{lemma}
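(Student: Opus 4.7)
The plan is to extend the Dirichlet/Neumann construction of the previous subsection by substituting Robin half-plane models, drawn from Section \ref{halfspace3}, at each Robin side face, while keeping the Neumann leading model at every vertex face where a Robin edge meets. Concretely, I would prescribe the models as follows: the full expansion \eqref{eq:reqtattd} at td; at each sf$_j$ corresponding to a Robin edge, the full expansion of the Robin heat kernel $H^{\Omega_j}_{Robin}$ on the auxiliary surface $\Omega_j$ (which is pc by Theorem \ref{thm:bigrobinmwb}, with Robin parameter extending $\kappa_j$); and at each ff$_j$ the same leading term \eqref{eq:reqtatff} as in the mixed D/N case, treating Robin as Neumann at the corner because, as shown in Section \ref{halfspace3}, the Robin correction to the Neumann half-space heat kernel is sub-leading at sf and vanishes to infinite order at the sf-td intersection.

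The one new compatibility check is at ff$_j\cap$sf$_j$ when $E_j$ is Robin. By Corollary \ref{cor:exactconecompatibility} the Neumann leading terms agree there. The Robin sub-leading model $T^{-1}\mathcal H_{-1,sf,R}$ carries a factor $\erfc((\xi+\xi')/2)$, and in coordinates adapted to ff$_j\cap$sf$_j$ one has $\xi+\xi'\to\infty$ as one approaches ff$_j$ from the interior of sf$_j$, so the Robin correction vanishes to infinite order there. This is the same rapid-decay phenomenon exploited in Section \ref{halfspace3} at sf$\cap$td, just invoked in a different regime. Hence the Robin correction is compatible with the Neumann leading model at ff$_j$ and only contributes to sub-leading orders. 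Compatibility at td, tf, and $E_{j0}$ is immediate from the pc structure of $H^{\Omega_j}_{Robin}$.

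Once compatibility is established, a pc kernel $\widetilde H^{(1)}$ on $\widetilde\Omega_h^2$ realizing these models exists, but it will not yet satisfy the Robin condition exactly, since matching $H^{\Omega_j}_{Robin}$ at sf$_j$ only controls the pc expansion and not the actual boundary values. To repair this, I would follow the defect-killing procedure of Section \ref{halfspace3} edge by edge: on each Robin edge $E_j$ set $c_j(T,X,x',\xi'):=(\partial_y-\kappa_j(x))\widetilde H^{(1)}|_{y=0}$, which by the cancellation verified in Lemma \ref{lem:basicfactsrobinparam} has order at worst $-1$ at sf$_j$ and decays rapidly elsewhere, then subtract $\chi_j(z)\,y\,e^{-(y/T)^2}c_j(T,X,x',\xi')$, where $\chi_j$ is a smooth cutoff supported in a tubular neighborhood of $E_j$ disjoint from the other edges, chosen with gradient tangent to $E_j$ so that the Robin condition being installed is not disturbed. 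Each such correction lies in $\mathcal A_h^{\infty,0,0,0}$, so the leading models at td, sf$_j$, and ff$_j$ are preserved.

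The main obstacle is ensuring that the localized Robin corrections near distinct edges $E_j,E_{j+1}$ do not collide at the shared vertex $V_j$, which would spoil either the boundary condition on the adjacent edge or the ff$_j$ model. I would handle this by choosing each $\chi_j$ to vanish to infinite order at both endpoints of $E_j$; this is consistent because $c_j$ itself already decays rapidly away from $E_j$ and at $t=0$ off the diagonal, so truncation introduces only infinite-order errors at ff$_j$ and td. The remaining claims on the leading orders of $P^{(1)}_{Robin}=t\mathcal L H^{(1)}_{Robin}$ then follow as in the Neumann case of the previous subsection: the full td and sf$_j$ expansions are annihilated to infinite order (by design, and because $H^{\Omega_j}_{Robin}$ is a genuine heat kernel on $\Omega_j$ respectively), the Neumann model at ff$_j$ is solved to one order by the identities \eqref{eq:halfspacemodelproblemswork}, and the tangency/drop behavior of $t\mathcal L$ at $E_{0j}$, $E_{j0}$, hvrf$_j$, and hvlf$_j$ is unchanged, with the leading term at hvlf$_j$ killed by the flatness of the ff$_j$ model near that face.
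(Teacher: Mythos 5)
Your claimed compatibility check at $\textrm{ff}_j\cap\textrm{sf}_j$ is incorrect. You assert that $\xi+\xi'\to\infty$ as one approaches $\textrm{ff}_j$ from the interior of $\textrm{sf}_j$, so that the $\erfc$ factor in $\mathcal H_{-1,sf,R}$ kills the Robin correction to infinite order. But the quantity $\xi+\xi'$ does \emph{not} blow up there. Near the triple corner $E_{j0}\cap\textrm{sf}_j\cap\textrm{ff}_j$ the paper chooses $\rho_E,\rho_{sf},\rho_{ff}$ with $y=\rho_E\rho_{sf}\rho_{ff}$, and since $T$ also has the product form $\rho_{sf}\rho_{ff}\cdot(\text{nonvanishing})$ in that patch, one finds $\xi=y/T\sim\rho_E$ is bounded; large $\xi+\xi'$ corresponds to approaching td, not ff. So $\mathcal H_{-1,sf,R}$ has a nontrivial restriction to $\textrm{sf}_j\cap\textrm{ff}_j$, and the compatibility of the Robin-induced subleading terms at sf and ff is a genuine constraint. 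The rapid-decay phenomenon you are invoking (from Section \ref{halfspace3}) applies only at $\textrm{sf}\cap\textrm{td}$, which is a different regime. This is exactly the nontrivial point the paper addresses: it works out the recursive compatibility conditions \eqref{eq:recursivecondsf}, \eqref{eq:recursivecondff}, and verifies \eqref{eq:needthisequation} in coordinates $(\rho_E,\rho_{sf},\rho_{ff})$, showing that the Robin boundary condition propagates consistently into the lower-order terms of the ff expansion.

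Your two-step route (build a pc kernel realizing the models, then install the Robin condition by subtracting $\chi_j\,y\,e^{-(y/T)^2}c_j$) has a further gap even if compatibility were established. You take $\chi_j$ to vanish to infinite order at the endpoints of $E_j$ and justify this by claiming $c_j$ already decays rapidly there, but it does not: near $\textrm{ff}_j$ the leading term of $\widetilde H^{(1)}$ is the Neumann model at order $T^{-2}$, which has nonzero restriction to $y=0$, so $c_j=(\partial_y-\kappa_j)\widetilde H^{(1)}|_{y=0}$ has leading order $T^{-2}$ there from the $-\kappa_j\widetilde H^{(1)}|_{y=0}$ term. Truncating the correction near the vertices therefore leaves a nonvanishing Robin defect on $E_j$ in a neighborhood of $V_{j-1}$ and $V_j$, and the resulting kernel fails the first bullet of the lemma. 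If instead you let $\chi_j$ extend up to the vertices, the corrections for $E_j$ and $E_{j+1}$ interfere (each spoils the boundary condition on the other edge), and you are again forced into a compatibility analysis at the corner. The paper avoids this dilemma entirely by proving directly that a pc kernel with the prescribed models \emph{and} satisfying the Robin conditions exists, rather than constructing the models first and correcting the boundary condition afterward.
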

	\begin{proof} The issue is compatibility of all these requirements, noting that Robin boundary conditions are more complicated than Dirichlet or Neumann boundary conditions at the intersections of $E_{j0}$ with sf$_j$ and ff$_j$. However, it turns out that Robin boundary conditions only affect the \emph{sub-leading} terms of the expansion of $H^{(1)}$ at ff. This is why the Robin heat kernel may be viewed as a correction of the Neumann heat kernel.
		
		We require the full expansion \eqref{eq:reqtattd} at td, and observe that this is compatible with the expansion at sf$_j$, whose form is guaranteed by Theorem \ref{thm:bigrobinmwb}, and the leading term \eqref{eq:reqtatff} at ff$_j$. Indeed 
		the compatibility betwen td and sf$_j$ follows from the fact that the Robin heat kernel on $\Omega_j$ is pc. The compatibility between td and ff$_j$ follows from the fact that the Neumann heat kernel on $\Omega$ is pc.  The compatibility between sf$_j$ and the leading term at ff$_j$ follows from the fact that the leading term at sf$_j$ is the same as for the Neumann problem on $\Omega$, so we can use Corollary \ref{cor:exactconecompatibility} as in the previous section. It remains only to show that we can find such a kernel which also satisfies Robin boundary conditions.
		
		To do this, note that Robin boundary conditions imply that if $u$ is the leading order term (zeroth order) of the expansion of $H^{(1)}_{Robin}$ at ff, then the next term must be
		\[\kappa(x)y\cdot u.\]
		Since $y$ vanishes at ff$_j$ and sf$_j$ as well as $E_{0j}$, this term vanishes to an order at ff$_j$ and sf$_j$ which is one higher than the order of $u$ there. Hence any compatibility requirements only affect the lower order terms.
		
		In order to dissect the compatibility requirements imposed by Robin boundary conditions, we zoom in near a triple intersection $E_{j0}\, \cap\, $\sff$_j\, \cap\, $\ff$_j$. Let boundary defining functions $\rho_E$, $\rho_{\sff}$, and $\rho_{\ff}$ be chosen so that the product of all three is $y$; we use these three coordinates and suppress the (parametric) dependence in all other coordinates. We write out the (previously specified) expansion at sf$_j$ as well as the (unknown save for the first term) expansion at ff$_j$, doing both for $tH^{(1)}$ rather than $H^{(1)}$ to keep notation simple:
		\begin{align}\label{eq:compatexpansions}
			& tH^{(1)}_{Robin}\cong\sum_{i=0}^{\infty}\rho_{\sff}^i g_i(\rho_E,\rho_{\ff})\textrm{ at \sff}_j; \\  
			& tH^{(1)}_{Robin}\cong\sum_{j=0}^{\infty}\rho_{\ff}^j h_j(\rho_E,\rho_{\sff})\textrm{ at \ff}_j. \nonumber 
		\end{align}
		We also write the expansion of each $g_i$ at $\rho_{\ff}=0$:
		\begin{equation}\label{eq:doubleexpansion} g_i(\rho_E,\rho_{\ff})\cong \sum_{k=0}^{\infty}\rho_{\ff}^ka_{ik}(\rho_E)+O(\rho_{\ff}^{\infty}).\end{equation}
		In order for the expansions \eqref{eq:compatexpansions} to be compatible with each other, for each $j$, we need
		\begin{equation}\label{eq:sfffcomp}
			h_{j}(\rho_{E},\rho_{\sff})\cong\sum_{i=0}^{\infty}\rho_{\sff}^{i}a_{ij}(\rho_E) + O(\rho_{\sff}^{\infty}).
		\end{equation}
		
		On the other hand, in these coordinates, our Robin boundary condition becomes
		\begin{align*} \left(\frac{1}{\rho_{\sff}\rho_{\ff}}\frac{\partial}{\partial\rho_E}-\kappa(\rho_{\sff},\rho_{\ff})\right)H^{(1)}_{Robin}=0, \\ 
			\mbox{ i.e. }\left(\frac{\partial}{\partial\rho_E}-\rho_{\sff}\rho_{\ff}\kappa(\rho_{\sff},\rho_{\ff})\right)tH^{(1)}_{Robin}=0. 
		\end{align*} 
		Plugging in \eqref{eq:compatexpansions}, organizing, and equating the coefficients of the $\rho_{\sff}^i$ terms tells us that the compatibility condition at sf$_j\cap E_{j0}$ is, for each $i\geq 1$:
		\begin{align}\label{eq:recursivecondsf}
			& (g_i)_{\rho_E}(0,\rho_{\ff})= \\ 
			& \rho_{\ff}\cdot\left(\textrm{ the coefficient of }\rho_{\sff}^{i}\textrm{ in }\sum_{\ell=0}^i\kappa(\rho_{\sff},\rho_{\ff})g_{\ell-1}(0,\rho_{\ff})\rho_{\sff}^{\ell}\right), \nonumber 
		\end{align}
		and that this derivative is zero when $i=0$. 
		Similarly, the compatibility condition at ff$_j\cap E_{j0}$ is, for each $j\geq 1$,
		\begin{align}\label{eq:recursivecondff}
			&(h_j)_{\rho_E}(0,\rho_{\sff})= \\ 
			& \rho_{\sff}\cdot\left(\textrm{ the coefficient of }\rho_{\ff}^{j}\textrm{ in }\sum_{m=0}^j\kappa(\rho_{\sff},\rho_{\ff})h_{m-1}(0,\rho_{\sff})\rho_{\ff}^{m}\right), \nonumber
		\end{align}
		and that the derivative is zero when $j=0$.

		Recall that the full expansion of $H^{(1)}_{Robin}$ is specified at sf$_j$; since that heat kernel satisfies Robin conditions, we assume the compatibility condition \eqref{eq:recursivecondsf}. We have also specified the first term $h_0(\rho_{E},\rho_{\sff})$ at ff$_j$. Since it satisfies a Neumann boundary condition, its $\rho_E$ derivative at $E_{0j}$ is indeed zero, as required. We need to show that lower-order terms $h_j$, $j\geq 1$, may be chosen to simultaneously guarantee \eqref{eq:sfffcomp} and \eqref{eq:recursivecondff}. Working one $j$ at a time, \eqref{eq:sfffcomp} prescribes the full expansion of $h_j(\rho_E,\rho_{\sff})$ at $\rho_{\sff}=0$, and \eqref{eq:recursivecondff} prescribes the order 1 term of $h_j$ at $\rho_E=0$ in terms of the order-0 term of $h_{j-1}$. As long as these two requirements are consistent we are fine. 
		
		To check this, we just plug \eqref{eq:sfffcomp} into \eqref{eq:recursivecondff}. After rearrangement and equating like terms, we see that we need for each $i$ and $j>1$,
		\begin{align}\label{eq:needthisequation}
			& a_{ij}'(0)= \\ 
			& \textrm{ the coefficient of }\rho_{\sff}^{i}\rho_{\ff}^j\textrm{ in }\sum_{\ell=0}^{i-1}\sum_{m=0}^{j-1}\kappa(\rho_{\sff},\rho_{\ff})a_{\ell,m}(0)\rho_{\sff}^{\ell+1}\rho_{\ff}^{m+1}. \nonumber
		\end{align}
		This, in turn, is guaranteed by plugging \eqref{eq:doubleexpansion} into \eqref{eq:recursivecondsf}, completing the proof of Lemma \ref{lem:H1Robin}.
	\end{proof}
	
	The construction of the Robin heat kernel is now analogous to the Dirichlet and Neumann cases. We solve away the error at hvlf$_j$ and then at $E_{j0}$. When solving away the error at hvlf$_j$, we need to remove a term $r^{\gamma}a(\theta, t,z')$. Since $\partial_{\theta}=r\partial_y$, the coefficient $a(\theta,t,z')$ actually solves Neumann conditions, rather than Robin conditions, at $\theta=0$ and $\theta=\alpha_j$. So as in the Neumann construction, the indicial equation may be solved and the solution, which has leading order $\gamma+2$ at hvlf$_j$, may be added to our parametrix in a neighborhood of hvlf$_j$. Of course this does not preserve the Robin condition at $E_{j0}$. However, the error has leading order $\gamma+2$ at hvlf$_j$, and $y$ has order 1 there. If we just add back $\kappa y$ times this Robin error in a neighborhood of hvlf$_j$, the result satisfies the Robin boundary condition. Moreover, after applying $t\mathcal L$, the result has error at worst $(\gamma+2)+1-2=\gamma+1$ there. So this construction may be iterated to remove the error at hvlf$_j$. 
	
	The error at $E_{j0}$ may be eliminated in the same way as before, since adding terms at order 2 at $E_{j0}$ does not affect the Robin boundary condition there. The construction of the formal Neumann series proceeds precisely as before, and yields:
	
	\begin{theorem}The heat kernel for $\Omega$, with Dirichlet, Neumann, or Robin boundary conditions on each side $E_j$, is pc on $\Omega_h^2$, vanishing to infinite order at tf and continuous down to all boundary hypersurfaces except for td, sf$_j$, and ff$_j$.
		
		Its full expansion at td is \eqref{eq:reqtattd}, which is the same as that for a closed manifold.
		
		Its full expansion at each sf$_j$ is \eqref{eq:reqtatsf}, which is identical to that for the manifold with boundary $\Omega_j$ and the appropriate (Dirichlet/Neumann/Robin) boundary condition. Note that by Theorem \ref{thm:bigrobinmwb}, at any Robin component of sf$_j$, the leading term is equal to the leading term for the heat kernel on $\Omega_j$ with Neumann boundary conditions, the second term is $\mathcal H_{-1,\sff,R}$, and all other terms are at order $T=t^{1/2}$.
		
		Its expansion at each ff$_j$ has leading term \eqref{eq:reqtatff}, with Neumann conditions at any Neumann OR Robin component, and Dirichlet conditions at any other Dirichlet component. There are no other terms within one order in $T=t^{1/2}$.
	\end{theorem}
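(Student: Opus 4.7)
The plan is to closely mirror the Dirichlet/Neumann parametrix construction, starting from the Robin parametrix $H^{(1)}_{Robin}$ given by Lemma \ref{lem:H1Robin}, then iteratively eliminating its error $P^{(1)}_{Robin} = t\mathcal{L} H^{(1)}_{Robin}$ at hvlf$_j$ and $E_{j0}$ while preserving Robin boundary conditions, and finally closing a convergent Neumann series exactly as before. Lemma \ref{lem:H1Robin} has already done the hard compatibility work at the triple intersections sf$_j \cap$ ff$_j \cap E_{j0}$, so the error $P^{(1)}_{Robin}$ has precisely the same index set profile as in the Dirichlet/Neumann construction, and the only remaining obstacles are structural.

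To remove the error at hvlf$_j$, I would solve the indicial equation on the model cone exactly as in the Neumann case: to eliminate a term $r^\gamma a(\theta, t, z')$, find $u(r,\theta,t,z')$ with leading order $r^{\gamma+2}$ (with a possible log from coinciding indicial roots). The key observation is that since $\partial_\theta = r\partial_y$ at the relevant leading order, the angular coefficient $a$ automatically satisfies \emph{Neumann} rather than Robin conditions at $\theta = 0, \alpha_j$, so the standard indicial solution $u$ does as well. Subtracting a cutoff of $u$ from the parametrix therefore breaks the Robin condition at $E_{j0}$; to repair this, I would add $\kappa(x) y \cdot u$ in the same neighborhood. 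Because $y$ vanishes to first order at hvlf$_j$, this repair term is one order better there, and after applying $t\mathcal{L}$ the resulting error at hvlf$_j$ is at worst $r^{\gamma+1}$, so iteration removes the hvlf$_j$ error to infinite order. Eliminating the error at $E_{j0}$ then proceeds via the same boundary-elliptic argument used in Proposition \ref{prop:improvementatrf}: the correction terms start at order $y^2$, which is transparent to the first-order Robin condition. This produces $H^{(2)}_{Robin}$ with $P^{(2)}_{Robin} = t\mathcal{L} H^{(2)}_{Robin}$ vanishing to infinite order at tf, td, each sf$_j$, each hvlf$_j$, and each $E_{j0}$, with the same leading orders as in the Dirichlet/Neumann case at the remaining faces.

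The Neumann series step then follows the Dirichlet/Neumann template verbatim: set $\Id + P^{(3)}_{Robin} := \Id - \sum_{j=1}^\infty (-T^{-2} P^{(2)}_{Robin})^j$, apply Theorem \ref{thm:comp} to track orders of each iterated composition, asymptotically (and in fact convergently) sum, and define $H^{(3)}_{Robin} = H^{(2)}_{Robin}(\Id + P^{(3)}_{Robin})$. By uniqueness, this is the true Robin heat kernel. The claimed leading-order expansions at td, sf$_j$, and ff$_j$ are then inherited from $H^{(2)}_{Robin}$, since by Theorem \ref{thm:comp} the composition $H^{(2)}_{Robin} P^{(3)}_{Robin}$ is of strictly lower order at each of these faces and does not disturb the top terms.

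The main obstacle is the subtle interaction between the Robin boundary condition and the parametrix improvement at hvlf$_j$: one must verify that the $\kappa y$ compensation introduces no new inconsistency at ff$_j \cap E_{j0}$, and that the recursive compatibility relations \eqref{eq:recursivecondsf}--\eqref{eq:needthisequation} from Lemma \ref{lem:H1Robin} are preserved throughout. This works precisely because the Robin condition is inhomogeneous only at first order in $y$, so corrections supported at order $y^2$ or deeper (including all iterated compositions in the Neumann series) are automatically Robin-compatible. Once this is established, the claimed leading term at ff$_j$—namely the Neumann model $\mathcal{H}_{-2,ff,\cdot\cdot}$ at any Robin edge—follows because the Robin correction itself first appears at order $T^{-1}$, which is a full order below the leading Neumann term, matching the manifold-with-boundary picture of Theorem \ref{thm:bigrobinmwb}.
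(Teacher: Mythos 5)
Your proposal follows the paper's own proof essentially step for step: you start from $H^{(1)}_{Robin}$ of Lemma~\ref{lem:H1Robin}, remove the hvlf$_j$ error by solving the indicial equation (noting that since $\partial_\theta = r\partial_y$ the coefficient $a$ satisfies Neumann conditions, so the indicial solution does too), repair the broken Robin condition by adding $\kappa y\,u$ near hvlf$_j$ (which is one order better there, so after $t\mathcal L$ the error drops to $\gamma+1$ and the iteration closes), remove the $E_{j0}$ error via boundary ellipticity starting at $y^2$, and finish with the same Neumann series and composition-theorem bookkeeping. This is exactly the argument in the paper, so there is nothing substantive to add; the only minor cosmetic difference is that you explicitly flag the possible re-introduction of incompatibility at ff$_j\cap E_{j0}$, which the paper handles implicitly by the same observation you make that the corrections are inserted at or below first order in $y$ and are localized near hvlf$_j$.
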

	
	\section{Heat trace on a curvilinear polygonal domain}
	Let $\Omega$ be a curvilinear polygonal domain as defined previously, with a Dirichlet, Neumann, or Robin condition along each side. Assume any Robin parameters $\kappa(x)$ are smooth along each side. In the previous section we have constructed the heat kernel for $\Omega$ and shown that it is pc on $\Omega_h^2$. We now pass to the heat trace.
	
	The first thing to do is to restrict to the diagonal. The lifted diagonal in $\Omega_h^2$ is a p-submanifold and is diffeomorphic to $\Omega_h$ via the lift of the map $(t,z,z)\to (t,z)$. The identification of faces is hvff $\to$ sv, ff $\to$ pv, sf $\to$ pe, td $\to$ tf.  Therefore, by restriction:
	\begin{proposition} The diagonal heat kernel $H_{\Omega}(t,z,z)$ is pc on $\Omega_{h}$, with leading order $-2$ at tf, each $pv_j$, and each $pe_j$, as well as non-negative leading orders at all other boundary hypersurfaces.
	\end{proposition}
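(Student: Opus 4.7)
The plan is to identify the lifted diagonal in $\Omega_h^2$ with the single heat space $\Omega_h$ and then to restrict the heat kernel---which has just been shown to be pc on $\Omega_h^2$---to this diagonal.

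First I would check that the diagonal $D_0 = \{(z,z,t) : z \in \Omega,\, t \in [0,1)\}$ in $\Omega_0 \times \Omega_0 \times [0,1)$ lifts to a p-submanifold $\widetilde{D}$ of $\Omega_h^2$ that is diffeomorphic, as a manifold with corners, to $\Omega_h$. This amounts to tracking each blow-up used in constructing $\Omega_h^2$ upon restriction to $D_0$. The intersections $\mathrm{hvlf}_j \cap \mathrm{hvrf}_k$ are disjoint from $D_0$ when $j \neq k$ and, for $j = k$, coincide on $D_0$ with the vertex set $\tilde V_j \times [0,1)$; the $t$-parabolic blow-up of $\hvff_{jj} \cap \{t=0\}$ restricts to the $t$-parabolic blow-up of $\tilde V_j \times \{0\}$, which is precisely the blow-up producing $pv_j$ in $\Omega_h$; likewise the blow-up creating $\mathrm{sf}_j$ restricts to the one producing $pe_j$; and the final blow-up creating td restricts on the interior diagonal at $t=0$ to the $t$-parabolic refinement giving tf of $\Omega_h$. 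Under this diffeomorphism one obtains the face correspondence $\mathrm{td} \leftrightarrow \mathrm{tf}$, $\mathrm{ff}_j \leftrightarrow pv_j$, $\mathrm{sf}_j \leftrightarrow pe_j$, with the joint restriction of $\hvff_{jj}, \mathrm{hvlf}_j, \mathrm{hvrf}_j$ giving $sv_j$ and the joint restriction of $E_{j0}, E_{0j}$ giving $e_j$; the face tf of $\Omega_h^2$ has empty intersection with $\widetilde{D}$.

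Second, I would invoke the standard fact that restriction of a pc function on a manifold with corners to a p-submanifold is itself pc, with index sets inherited face-by-face. Applied to $\widetilde{D}$, this yields a pc function on $\Omega_h$. The leading orders then follow directly from the main heat kernel construction of the previous section: leading order $-2$ at td, $\mathrm{sf}_j$, and $\mathrm{ff}_j$, and non-negative orders at all remaining faces of $\Omega_h^2$. Translating via the correspondence above gives leading order $-2$ at tf, $pv_j$, and $pe_j$, with non-negative orders at $sv_j$ and $e_j$, as required.

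The main obstacle is the first step: verifying that $D_0$ lifts to a p-submanifold of $\Omega_h^2$ diffeomorphic to $\Omega_h$ demands a careful induction through the sequence of blow-ups. A particular point of care is the overblown face $\hvff_{jk}$, which has no direct analogue in $\Omega_h$, together with the order in which the vertex-related blow-ups are performed. Once one organizes these so that each restricts cleanly to $D_0$---using the commutation rules of Proposition \ref{prop:blowupfacts} and Lemma \ref{lem:newnested} as needed---the identification becomes transparent and the proposition follows.
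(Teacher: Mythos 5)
Your proof is correct and follows essentially the same approach as the paper, which gives only a one-sentence justification ("The point of constructing the single heat space the way we did is that the diagonal in $\Omega_h^2$ is isomorphic to $\Omega_{h}$. Therefore, by restriction\ldots"). You have simply fleshed out the blow-up bookkeeping---showing that each blow-up in the construction of $\Omega_h^2$ restricts cleanly to the diagonal and matches the corresponding blow-up in $\Omega_h$, including the observation that the $\mathrm{hvff}_{jj}$ blow-up becomes trivial (a boundary hypersurface blow-up) on the diagonal and that $\mathrm{hvff}_{jk}$ with $j\neq k$ is disjoint from it---which is exactly the unstated content of the paper's ``isomorphic'' claim.
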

	\begin{remark} Naturally, all locality statements about the kernel still hold when it is restricted to the diagonal. For example, the expansion at tf is the same as that for a closed manifold. The expansion at $pe_j$ is the same as that for a manifold with boundary. If there are any Robin edges, the expansion at the corresponding $pe_j$ is the same as the Neumann expansion, plus the restriction to the diagonal of $\mathcal H_{-1,\sff,R}$, plus terms of order zero.
	\end{remark}
	
	Let $\pi_{1}$ be the lift of the projection map from $\Omega_0\times [0,1)_T$ to $[0,1)_T$ to a map from $\Omega_{h}$ to $[0,1)_T$. This map is the composition of a projection map and a blow-down map and therefore is a b-map which is a b-submersion. Since the image space has no corners it is automatically b-normal, and therefore $\pi_{1}$ is a b-fibration. Thus, from the pushforward theorem:
	\begin{theorem} The heat trace $\Tr H_{\Omega}(t)$ has a pc expansion in $T=t^{1/2}$.
	\end{theorem}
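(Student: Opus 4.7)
The plan is to realize the heat trace as the pushforward of the diagonal heat kernel under the b-fibration $\pi_1 : \Omega_h \to [0,1)_T$, and then invoke Melrose's pushforward theorem \cite{hmm} to conclude polyhomogeneity. Both ingredients are already in hand: the preceding proposition established that $H_\Omega(t,z,z)$ is pc on $\Omega_h$ with the stated leading orders, and $\pi_1$ was just observed to be a b-fibration as the composition of the blow-down map $\Omega_h \to \Omega_0 \times [0,1)_T$ with the smooth projection onto $[0,1)_T$.

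First I would rewrite the heat trace as a fiber integral,
\[\Tr H_\Omega(t) = \int_\Omega H_\Omega(t,z,z)\, dz = (\pi_1)_*(H_\Omega(t,z,z)\, dz),\]
so that the right-hand side is the pushforward of a pc section of the density bundle. To apply the pushforward theorem I would convert the natural density $dz$ to a canonical density on $\Omega_h$. This introduces explicit powers of boundary defining functions at the faces created by blow-ups in the single-space construction, exactly in the spirit of Proposition \ref{prop:densityblowup}: each blow-up of a $p$-submanifold of codimension $k$ contributes a factor of $\rho^{k-1}$ at the new face, with the additional bookkeeping that the $t$-parabolic blow-ups at $pv_j$ and $pe_j$ use $\sqrt{t}$ in place of $t$.

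Next I would verify the integrability hypothesis of the pushforward theorem, namely that the density has leading order strictly greater than $-1$ at each boundary hypersurface of $\Omega_h$ mapped by $\pi_1$ into the interior of $[0,1)_T$. Under $\pi_1$, the faces in the lift of $\{t=0\}$, namely tf, the $pv_j$, and the $pe_j$, all map onto $\{T=0\}$; the remaining faces $sv_j$ and $e_j$, which persist for positive $t$, map generically into the interior. At $sv_j$ and $e_j$ the preceding proposition guarantees non-negative leading order for $H_\Omega(t,z,z)$, and the density conversion there adds only non-negative shifts. Conceptually this is just the statement that for each fixed $t>0$ the fiber integral $\int_\Omega H_\Omega(t,z,z)\,dz$ is a perfectly well-defined integral of a smooth bounded function over a compact set.

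With the density accounting in place and integrability verified, the pushforward theorem yields a pc function on $[0,1)_T$ whose index set at $T=0$ is the extended union of the contributions from tf, the $pv_j$, and the $pe_j$. The main technical obstacle will be the careful bookkeeping in the density conversion, since one must track how the factor $dz$ transforms under each blow-up to compute the precise index set of the output and to confirm that the integrability condition is indeed satisfied with room to spare; once that accounting is done, polyhomogeneity of $\Tr H_\Omega(t)$ in $T = t^{1/2}$ follows at once. I would expect logarithmic terms to appear in the expansion from extended unions of index sets when several preimage faces contribute with coincident exponents at $T=0$, consistent with the $O(t^{1/2}\log t)$ remainder anticipated in Theorem \ref{thm:mainthm}.
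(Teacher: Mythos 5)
Your proposal is correct and follows essentially the same route as the paper: realize the trace as $(\pi_1)_*$ applied to the diagonal heat kernel (pc on $\Omega_h$ by the preceding proposition), convert $dz$ to a canonical density on $\Omega_h$, check integrability at the faces $sv_j$ and $e_j$ that are not mapped to $\{T=0\}$, and invoke the pushforward theorem for the b-fibration $\pi_1$. One small imprecision in your closing remark: after the density conversion, the paper shows that the order-$j$ coefficient at tf already vanishes to order $j+1$ at $pe$ and order $j+2$ at $pv$ (and similarly from $pe$ to $pv$), so \emph{no} extended unions occur among tf, $pe_j$, $pv_j$ under $(\pi_1)_*$; the $O(t^{1/2}\log t)$ remainder in Theorem \ref{thm:mainthm} instead traces back to possible logarithms already present in the diagonal heat kernel's expansion at $pv$ (inherited from the face ff of $\Omega_h^2$), not to coincidences created by the pushforward itself.
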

	We can say substantially more, and in fact can explicitly identify all terms in this expansion up to and including the $t^0$ term, by carefully analyzing push-forward by this integration map. The integration is with respect to the usual measure $dz$ on $\Omega$. Multiplying both sides by the canonical density $dT$, we get
	\[\int_{\Omega}H_{\Omega}(T^2,z,z)\, dz\, dT=\Tr H_{\Omega}(T^2)\, dT.\]
	The density $dz\, dT$ is $\nu(\Omega\times[0,1)_T)$, but it is not $\nu(\Omega_{h})$. Using an analogous process to the proof of Proposition \ref{prop:densityblowup},
	\[(\beta)^*(dz\, dT)=\rho_{\ff}^2\rho_{\sff}\nu(\Omega_{h}).\]
	So, writing integration as a push-forward by $\pi_{1}$, we obtain
	\[(\pi_{1})_*(H^{\Omega}(T^2,z,z)\rho_{pv}^2\rho_{pe}\cdot \nu(\Omega_{h}))=\Tr H_{\Omega}(T^2)\cdot \nu([0,1)_T).\]
	From this we see that we really need to understand $H^{\Omega}(T^2,z,z)\rho_{pv}^2\rho_{pe}$.
	\begin{remark}This transformation to canonical densities explains why the leading terms at $pe$ and $pv$, though they both have order $-2$, only contribute at orders $-1$ and $0$ respectively to the heat trace.\end{remark}
	
	Consider the function $H^{\Omega}(T^2,z,z)\rho_{pv}^2\rho_{pe}$. Its expansions are as follows:
	\begin{itemize}
		\item At tf, there is an expansion in integer powers of $T$ beginning with $T^{-2}$.
		\item At $pe$, there is an expansion in integer powers of $T$ beginning with $T^{-1}$.
		\item At $pv$, there is an expansion with leading term at $T^0$ which may have logarithmic terms beginning at $T\log T$.
	\end{itemize}
	We may say more about these expansions. Each of them is inherited from the expansion at the corresponding face in the double space. From that analysis, each term of the expansion of $H^{\Omega}(T^2,z,z)$ at the face tf is $T^j$ times a smooth function of $z$. Since $T=\rho_{\tf}\rho_{pe}\rho_{pv}$ for suitable boundary defining functions, the coefficient of the term of order $\rho_{\tf}^j$ at tf has leading order $j$ at $pe$ and at $pv$. When multiplying by $\rho_{pv}^2\rho_{pe}$, though, this coefficient has leading order $j+1$ at $pe$ and $j+2$ at $pv$. Similarly, the order $\rho_{pe}^j$ term at $pe$ has leading order $j+1$ at $pv$.
	
	What this means is that no extended unions appear in the pushforward theorem. Recall that an extended union only occurs when the coefficient of a term of order $j$ at one boundary hypersurface in the preimage of $\{t=0\}$ itself has leading order at most $j$ at an adjacent such boundary hypersurface, which may produce a term $t^j\log t$ (or $t^j(\log t)^2$ if all three boundary hypersurfaces are involved). The preceding discussion shows that this does not happen. So any logarithmic terms in the heat trace expansion must come from logarithmic terms at the face ff (i.e. pv), and thus arise at order $T^{1/2}$ at the earliest. Therefore
	\[\Tr H_{\Omega}(t)=a_{-1}t^{-1}+a_{-1/2}t^{-1/2}+a_0t^0+O(t^{1/2}\log t).\]
	Moreover, the coefficients $a_{-1}$, $a_{-1/2}$, and $a_0$ are the sum of the contributions from each of the three faces tf, $pe$, and $pv$.
	
	These contributions are easy to evaluate. At tf, the expansion is just the usual heat trace expansion from the interior of a manifold (as the coefficients are all the same), giving a contribution of 
	\[\frac{A(\Omega)}{4\pi t}+\frac{1}{12\pi}\int_{\Omega} K(z)\, dz+O(t).\] 
	At $pe_j$, for the same reason, the expansion is the heat trace expansion for a manifold with boundary, giving a contribution for each edge $E_j$. The McKean-Singer asymptotics \cite{mc-s} tell us what this term must be in the Dirichlet and Neumann settings. In the Robin setting, there is an extra contribution at $t^0$ coming from the integral of $\mathcal H_{-1,\sff,Robin}$, and it is easy to see that it will be an integral of $\kappa(x)$ over the boundary times a constant. From \cite[Theorem 5.2]{gilkey}, we know what the constant must be.\footnote{An expression for this term also appears in \cite{zayed}. However, it differs by an overall sign from the expression in \cite{gilkey}. A direct computation due to F\'elix Houde \cite{houdepc} indicates that the reference \cite{gilkey} has the correct sign.} All in all, the contribution from $pe_j$ is, where $k_g(x)$ is the geodesic curvature on the boundary,
	\[-\frac{\ell(E_j)}{8\sqrt{\pi}} t^{-1/2}+\frac{1}{12\pi}\int_{E_j}k_g(x)\, dx+O(t^{1/2})\textrm{ in the Dirichlet setting;}\]
	\[\frac{\ell(E_j)}{8\sqrt{\pi}} t^{-1/2}+\frac{1}{12\pi}\int_{E_j}k_g(x)\, dx+O(t^{1/2})\textrm{ in the Neumann setting;}\]
	\[\textrm{ and } \frac{\ell(E_j)}{8\sqrt{\pi}} t^{-1/2}+\frac{1}{12\pi}\int_{E_j}k_g(x)\, dx-\frac{1}{2\pi}\int_{E_j}\kappa(x)\, dx+O(t^{1/2})\] 
	in the Robin setting.  As discussed previously, at$pv$, the leading order contribution to the heat trace is at $T^0$. This reflects the fact that $r dr$ lifts to $T^2 R dR$, thereby canceling the factor of $T^{-2}$. The leading order term in the expansion of the diagonal heat kernel at $pv$ is the same as it is for the heat kernel on an exact sector of the same angle, and therefore may be calculated by studying the model heat kernel on that sector. 

	\subsection{Vertex contributions} 
	We recall our explicit calculations of the Green's kernels for infinite circular sectors to compute the ``vertex contribution'' to the short time asymptotic expansion of the heat trace.  For this purpose it is convenient to define:  
	$$A := \int_{0}^{\infty}K_{i\mu}(r \sqrt s)K_{i\mu}(r_0 \sqrt s) \cosh(\pi-|\phi_0-\phi|)\mu d\mu,$$
	$$B := \int_{0}^{\infty}K_{i\mu}(r \sqrt s)K_{i\mu}(r_0 \sqrt s) \frac{\sinh\pi\mu}{\sinh\gamma\mu}\cosh(\phi+\phi_0-\gamma)\mu d\mu$$
	$$C :=  \int_{0}^{\infty}K_{i\mu}(r \sqrt s)K_{i\mu}(r_0 \sqrt s) \frac{\sinh(\pi-\gamma)\mu}{\sinh\gamma\mu}\cosh(\phi-\phi_0)\mu  d\mu,$$
	$$F := \int_{0}^{\infty}K_{i\mu}(r \sqrt s)K_{i\mu}(r_0 \sqrt s)\frac{\sinh(\pi\mu)}{\cosh\gamma\mu}\sinh((\phi+\phi_0-\gamma)\mu) d\mu$$
	and
	$$E := - \int_{0}^{\infty}K_{i\mu}(r \sqrt s)K_{i\mu}(r_0 \sqrt s) \frac{\cosh(\pi-\gamma)\mu}{\cosh\gamma\mu}\cosh((\phi-\phi_0)\mu)d\mu.$$ 
	The Dirichlet and Neumann Green's functions are, respectively, 
	$$G_D = \frac{1}{\pi^2} \left( A - B + C \right), \quad G_N = \frac{1}{\pi^2} \left( A + B + C \right).$$
	For the Dirichlet condition at $\phi=0$ and Neumann condition at $\phi=\gamma$, the Green's function is 
	\begin{equation*}
		\frac{1}{\pi^2}(A+F+E).
	\end{equation*}
	In \cite[\S 3]{nrs1} we have computed the contributions of the terms $A$, $B$, and $C$ to the heat trace; see also \cite{vdbs} for an earlier computation along similar lines.  In particular, we computed the integral of each of these expressions, along the diagonal $r=r_0$ and $\phi=\phi_0$ over the region $[0, R]_r \times [0, \gamma]_\phi$ with respect to polar coordinates $(r, \phi)$.  The vertex contribution comes solely from the $C$ term in the D-D and N-N cases.  There we see that the C term contributes to the heat trace \cite[\S 3.1.3]{nrs1} 
	\begin{equation} \label{Ctrace} \frac{\pi^2 - \gamma^2}{24\pi \gamma}.  \end{equation} 
	In the D-N case, the vertex contribution arises from the terms F and E.  
	
	\subsubsection{Contribution from the $F$ term} 
	Let us make some manipulations
	\begin{multline*}
		\frac{\sinh(\pi\mu)}{\cosh\gamma\mu}\sinh((\phi+\phi_0-\gamma)\mu) \\ 
		= \frac{\sinh(\pi\mu)}{\cosh\gamma\mu}\sinh((\phi+\phi_0-\gamma)\mu)
		-\frac{\sinh(\pi\mu)}{\sinh\gamma\mu}\cosh((\phi+\phi_0-\gamma)\mu)\\
		+\frac{\sinh(\pi\mu)}{\sinh\gamma\mu}\cosh((\phi+\phi_0-\gamma)\mu)
	\end{multline*}
	\begin{multline*}
		=\frac{\sinh((\phi+\phi_0-\gamma)\mu)\sinh\gamma\mu-\cosh((\phi+\phi_0-\gamma)\mu)\cosh\gamma\mu}{\sinh\gamma\mu\cosh\gamma\mu}\sinh(\pi\mu)\\
		+\frac{\sinh(\pi\mu)}{\sinh\gamma\mu}\cosh((\phi+\phi_0-\gamma)\mu).
	\end{multline*}
	This expression simplifies to:
	
	\begin{equation*}
		\begin{gathered}
			-\frac{2\sinh(\pi\mu)}{\sinh(2\gamma\mu)}\cosh((\phi+\phi_0-2\gamma)\mu)+\frac{\sinh(\pi\mu)}{\sinh\gamma\mu}\cosh((\phi+\phi_0-\gamma)\mu)\\
			=:-2B_1+B_2.
		\end{gathered}
	\end{equation*}
	By the calculation of the trace of the $B$ term in \cite[\S 3]{nrs1}, the contribution of $B_2$ is $\frac{R}{4\sqrt{\pi t}}+O(\sqrt{t})$. Next we note that, for $\phi=\phi_0$,
	\begin{equation*}
		\int_{0}^{\gamma}B_1d\phi=\frac{\sinh\pi\mu}{2\mu}=\int_{0}^{\gamma}B_2d\phi.
	\end{equation*}
	Hence the contributions of $B_1$ and $B_2$ are the same, so that $F$ contributes
	\begin{equation}\label{B}
		-2\frac{R}{4\sqrt{\pi t}}+\frac{R}{4\sqrt{\pi t}}+O(\sqrt{t})=-\frac{R}{4\sqrt{\pi t}}+O(\sqrt{t}).
	\end{equation}
	Consequently, this gives no contribution because the coefficient of $t^0$ vanishes.  
	
	\subsubsection{Contribution from the $E$ term} 
	Finally, we study the term $E$.  We need to compute 
	$$-\frac{1}{\pi^2}\int_{0}^{\infty}K_{i\mu}(r\sqrt{s})K_{i\mu}(r_0\sqrt{s})\frac{\cosh(\pi-\gamma)\mu}{\cosh\gamma\mu}\cosh((\phi-\phi_0)\mu) d\mu$$
	This is similar to the computation of the $C$ term, which we would like to recycle.  Hence, we add and subtract: 
	\begin{multline*}
		-\frac{\cosh(\pi-\gamma)\mu}{\cosh\gamma\mu}\cosh((\phi-\phi_0)\mu)
		+\frac{\sinh(\pi-\gamma)\mu}{\sinh\gamma\mu}\cosh((\phi-\phi_0)\mu)\\
		-\frac{\sinh(\pi-\gamma)\mu}{\sinh\gamma\mu}\cosh((\phi-\phi_0)\mu)
	\end{multline*}
	\begin{multline*}
		=\frac{-\cosh(\pi-\gamma)\mu\sinh\gamma\mu+\sinh(\pi-\gamma)\mu\cosh\gamma\mu}{\sinh\gamma\mu\cosh\gamma\mu}\cosh((\phi-\phi_0)\mu)\\
		-\frac{\sinh(\pi-\gamma)\mu}{\sinh\gamma\mu}\cosh((\phi-\phi_0)\mu).
	\end{multline*}
	This reduces to:  
	\begin{equation*}
		\begin{gathered}
			\frac{2\sinh(\pi-2\gamma)\mu}{\sinh(2\gamma\mu)}\cosh((\phi-\phi_0)\mu)-\frac{\sinh(\pi-\gamma)\mu}{\sinh\gamma\mu}\cosh((\phi-\phi_0)\mu)\\
			=:2C_1-C_2.
		\end{gathered}
	\end{equation*}
	We recognize the term 
	$$\int_0 ^\infty K_{i\mu}(r\sqrt{s})K_{i\mu}(r_0\sqrt{s}) C_2 d\mu = C.$$
	Consequently, we already know the contribution to the trace from $C_2$, because it is the same as that which we computed for $C$ 
	\begin{equation*}
		\frac{\gamma}{2\pi}\cdot\frac{\pi^2-\gamma^2}{12\gamma^2} + O(t^\infty), \quad t \downarrow 0. 
	\end{equation*}
	The reason we write it in this way is to recall the differences between the contributions of $C_1$ and $C_2$. The factor of $\gamma$ in $\frac{\gamma}{2\pi}$ comes from the trace calculation in which we integrate the angular coordinate over $(0,\gamma)$.  This factor is therefore the same in $C_1$.  Hence when we consider $C_1$, we just need to change $\gamma$ to $2\gamma$ in the second factor only. The contribution of $C_1$ is 
	\begin{equation*}
		\frac{\gamma}{2\pi}\cdot\frac{\pi^2-(2\gamma)^2}{12(2\gamma)^2},
	\end{equation*}
	and hence the trace contribution of $E$ is
	\begin{equation}\label{D}
		\frac{\pi^2-4\gamma^2}{48\pi\gamma}-\frac{\pi^2-\gamma^2}{24\pi\gamma}=-\frac{\pi^2+2\gamma^2}{48\pi\gamma} + O(t^\infty).
	\end{equation}
	We have now computed the contribution of the vertex to the $t^0$ term. Any Robin-Dirichlet, or Robin-Neumann, or Robin-Robin corner is treated as if the Robin conditions were Neumann conditions, as the corresponding models at ff$_{diag,j}$ are the same.
	
	The vertex contribution for an interior angle of $\gamma$ is therefore:  
	\begin{equation} \label{corner-most} \frac{\pi^2 - \gamma^2}{24 \pi \gamma} \textrm{ for D-D, N-N, R-R, and N-R boundary conditions} \end{equation} 
	or 
	\begin{equation} \label{corner-dndr} -\frac{\pi^2+2\gamma^2}{48\pi\gamma} \textrm{ for D-N and D-R mixed boundary conditions.}\end{equation} 
	The vertex contribution \eqref{corner-dndr} appears to be new and may be of independent interest.  In \S \ref{s:altcalcs} we show how, given the D-D corner contribution, one may also use the more familiar series expression for the heat kernel as in \cite{cheeger} to compute the N-N and D-N corner contribution.  The result is of course the same as we have computed here.  In summary, we have Theorem \ref{thm:mainthm}.

	\begin{remark} The Gauss-Bonnet theorem dictates that
		\[2\pi\chi(\Omega)=\int_{\Omega}K(z)\, dz+\int_{\partial\Omega}k_g(x)\, dx+\sum_{j=1}^n(\pi-\alpha_j).\]
		This yields an alternate expression for $a_0$:
		\begin{align*}
			&a_0= \frac 16\chi(\Omega)-\frac{1}{12\pi}\sum_{j=1}^{n}(\pi-\alpha_j) - \frac{1}{2\pi}\sum_{j\in\mathcal E_R}\int_{E_j}\kappa_j(x)\, dx \\ 
			& +\sum_{j\in V_{=}}\frac{\pi^2-\alpha_j^2}{24\pi\alpha_j}+\sum_{j\in V_{\neq}}\frac{-\pi^2-2\alpha_j^2}{48\pi\alpha_j}.
		\end{align*}
	\end{remark}
	
	\begin{remark} It is straightforward to allow for surfaces which may also have isolated conical singularities.  An isolated conical singularity with opening angle $2\alpha$ will give contribute to the heat trace:    
		$$\frac{\pi^2 - \alpha^2}{12 \pi \alpha}.$$
	\end{remark}
	
	\subsection{Vertices as spectral invariants}  \label{s:isosp}  
	Here we apply our results, presenting several contexts in which the presence, or lack, of vertices is spectrally determined.  We also show that a jump in boundary condition is spectrally determined.  
	
	\begin{theorem} Let $\Sigma$ be a surface with at least one vertex with interior angle not equal to $\pi$ and either the Dirichlet boundary condition or the Neumann boundary condition.  Let $\Omega$ be a smoothly bounded surface with either the Dirichlet boundary condition or the Neumann boundary condition such that $\chi(\Omega) \leq \chi(\Sigma)$.  Then $\Sigma$ and $\Omega$ are not isospectral.   
	\end{theorem}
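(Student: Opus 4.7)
The plan is to use the first three coefficients $a_{-1}$, $a_{-1/2}$, $a_0$ of the heat trace expansion from Theorem \ref{thm:mainthm}, all of which are spectral invariants since isospectrality implies equality of the heat traces as functions of $t$. The strategy is to show that if $\Sigma$ and $\Omega$ were isospectral, then the vertex contributions to $a_0$ on $\Sigma$ would have to be canceled by a difference in Euler characteristic going the wrong way.

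First I would use $a_{-1/2}$ to align the boundary conditions. For a smoothly bounded $\Omega$ the coefficient $a_{-1/2}(\Omega)$ equals $\pm\ell(\partial\Omega)/(8\sqrt\pi)$ with sign $-$ for Dirichlet and $+$ for Neumann; similarly for $\Sigma$, which has nonempty boundary since it carries at least one corner. If the chosen boundary conditions differ, equating the signed perimeters forces $\ell(\partial\Sigma)=\ell(\partial\Omega)=0$, which contradicts the existence of a vertex on $\partial\Sigma$. Thus $\Sigma$ and $\Omega$ must carry the same boundary condition and have equal boundary length. Also, $a_{-1}$ forces $A(\Sigma)=A(\Omega)$, though this is not needed for the argument.

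Next, since both surfaces carry a uniform boundary condition (all Dirichlet or all Neumann), every vertex of $\Sigma$ lies in $\mathcal V_=$, so the full vertex contribution to $a_0(\Sigma)$ is $\sum_{j=1}^n \frac{\pi^2-\alpha_j^2}{24\pi\alpha_j}$. Using the Gauss--Bonnet reformulation of $a_0$ given in the remark following Theorem \ref{thm:mainthm} (no Robin integral appears in either setting), the equation $a_0(\Sigma)=a_0(\Omega)$ reduces to
\begin{equation*}
\frac{\chi(\Sigma)-\chi(\Omega)}{6}=\sum_{j=1}^{n}\left(\frac{\pi-\alpha_j}{12\pi}-\frac{\pi^2-\alpha_j^2}{24\pi\alpha_j}\right).
\end{equation*}
A direct simplification of each summand yields
\begin{equation*}
\frac{\pi-\alpha_j}{12\pi}-\frac{\pi^2-\alpha_j^2}{24\pi\alpha_j}=-\frac{(\pi-\alpha_j)^2}{24\pi\alpha_j},
\end{equation*}
so the right-hand side equals $-\sum_{j=1}^{n}\frac{(\pi-\alpha_j)^2}{24\pi\alpha_j}\leq 0$, with strict inequality because at least one $\alpha_j\neq\pi$ and all $\alpha_j\in(0,2\pi)$.

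On the other hand, the left-hand side $(\chi(\Sigma)-\chi(\Omega))/6$ is nonnegative by the hypothesis $\chi(\Omega)\leq\chi(\Sigma)$. This is a contradiction, completing the proof. I do not expect a serious obstacle here; the work is essentially contained in Theorem \ref{thm:mainthm}, and the only delicate point is the case analysis on boundary conditions ruled out via $a_{-1/2}$. The key algebraic observation, which makes the proof go through cleanly, is that the vertex correction $\frac{\pi^2-\alpha^2}{24\pi\alpha}$ always overshoots the Gauss--Bonnet angle defect $\frac{\pi-\alpha}{12\pi}$ by exactly the nonnegative quantity $\frac{(\pi-\alpha)^2}{24\pi\alpha}$, so corners are detected with a definite sign in the $t^0$ coefficient.
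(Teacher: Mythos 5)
Your proof is correct and uses essentially the same argument as the paper: compare the $a_0$ coefficient, rewrite via Gauss--Bonnet, and observe that each corner contributes the strictly positive quantity $\frac{(\pi-\alpha_j)^2}{24\pi\alpha_j}$ above the Euler-characteristic baseline (the paper states this as $a_0(\Sigma) = \frac{\chi(\Sigma)}{6} - \frac{n}{12} + \sum_j\frac{\pi^2+\alpha_j^2}{24\pi\alpha_j}$, and $-\frac{1}{12}+\frac{\pi^2+\alpha^2}{24\pi\alpha}=\frac{(\pi-\alpha)^2}{24\pi\alpha}$ is your identity in disguise). The preliminary step aligning boundary conditions via $a_{-1/2}$ is harmless but unnecessary: since $\Omega$ is smoothly bounded with no corners and no Robin edges, $a_0(\Omega)=\chi(\Omega)/6$ whether $\Omega$ carries Dirichlet or Neumann conditions, so the $a_0$ comparison goes through directly without ever matching boundary conditions.
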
 
	
	\begin{proof} 
		It suffices to compare the short time asymptotic expansion of the heat traces and demonstrate that the coefficients cannot be the same for $\Sigma$ and $\Omega$.  The coefficient $a_0$ for $\Sigma$ is: 
		$$a_0 (\Sigma) = \frac{\chi(\Sigma)}{6} - \frac{1}{12\pi} \sum_{j=1} ^n (\pi - \alpha_j) + \sum_{j=1} ^n \frac{\pi^2 - \alpha_j^2}{24 \pi \alpha_j},$$
		where $\Sigma$ has $n$ vertices with interior angles $\alpha_j$.  This expression simplifies to:
		$$a_0 (\Sigma) = \frac{\chi(\Sigma)}{6} - \frac{n}{12} + \sum_{j=1} ^n \frac{\pi^2 + \alpha_j ^2}{24 \pi \alpha_j}.$$
		On the other hand, 
		$$a_0 (\Omega) = \frac{\chi(\Omega)}{6} \leq \frac{\chi(\Sigma)}{6}.$$
		Since at least one $\alpha_j \neq \pi$, it is a straightforward exercise in multivariable analysis \cite{corners} to demonstrate the strict inequality 
		$$a_0 (\Sigma) > \frac{\chi(\Sigma)}{6} \geq \frac{\chi(\Omega)}{6} = a_0(\Omega).$$
	\end{proof} 
	
	We obtain a similar result for the Robin boundary condition.  Recall the Robin boundary condition is, 
	$$u = \kappa \frac{\pa u}{\pa \nu}, \quad \textrm{ on all smooth boundary components,} \quad  \kappa \geq 0.$$
	Above, $\frac{\pa u}{\pa \nu}$ is the \em inward \em pointing unit normal, as in \eqref{eq:robinbc}.  
	
	\begin{theorem} Let $\Sigma$ be a surface with at least one vertex with interior angle not equal to $\pi$ with the Robin boundary condition as above, with constant Robin parameter.  Let $\Omega$ be a smoothly bounded surface with $\chi(\Omega) \leq \chi(\Sigma)$.  Assume the same Robin boundary condition on $\pa \Omega$.  Then $\Sigma$ and $\Omega$ are not isospectral.  
	\end{theorem}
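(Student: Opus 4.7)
The plan is to compare the first three coefficients in the short-time heat trace asymptotic expansion of $\Sigma$ and $\Omega$ given by Theorem \ref{thm:mainthm}, and show that at least one of them must differ. Since isospectrality implies equality of the entire asymptotic expansion, this forces the conclusion. Note that isospectrality of course implies $A(\Sigma) = A(\Omega)$ via $a_{-1}$, but the interesting obstructions appear in $a_{-1/2}$ and $a_0$.

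First I would handle the case where $\Omega$ is Dirichlet. Since $\Sigma$ has the Robin condition on all of $\partial\Sigma$, $\mathcal E_D = \emptyset$ for $\Sigma$, so $a_{-1/2}(\Sigma) = \ell(\partial\Sigma)/(8\sqrt\pi) > 0$, whereas $a_{-1/2}(\Omega) = -\ell(\partial\Omega)/(8\sqrt\pi) < 0$. This contradicts isospectrality immediately. So from here on I may assume $\Omega$ has Neumann or Robin conditions, and equality of $a_{-1/2}$ then gives $\ell(\partial\Sigma) = \ell(\partial\Omega)$.

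Next I would compute $a_0(\Sigma) - a_0(\Omega)$. For $\Sigma$, since all edges are Robin and $\kappa$ is constant, Gauss-Bonnet allows me to rewrite the curvature integrals as in the remark, obtaining
\[
a_0(\Sigma) = \tfrac{\chi(\Sigma)}{6} - \tfrac{n}{12} + \sum_{j=1}^n \tfrac{\pi^2 + \alpha_j^2}{24\pi\alpha_j} + \tfrac{\kappa\,\ell(\partial\Sigma)}{2\pi},
\]
after combining the $\sum(\pi - \alpha_j)/(12\pi)$ terms from Gauss-Bonnet with the vertex contributions $\sum(\pi^2 - \alpha_j^2)/(24\pi\alpha_j)$. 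For smooth $\Omega$, $a_0(\Omega) = \chi(\Omega)/6$ in the Neumann case, and $a_0(\Omega) = \chi(\Omega)/6 + \kappa\,\ell(\partial\Omega)/(2\pi)$ in the Robin case. Since lengths agree, in either case the boundary integral contributions either match or are simply absent on the $\Omega$ side.

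The main step, and the only nonroutine one, is then the inequality
\[
\frac{\pi^2 + \alpha^2}{24\pi\alpha} \geq \frac{1}{12}, \quad \alpha \in (0,2\pi),
\]
which follows from AM-GM applied to $\pi^2$ and $\alpha^2$, with equality if and only if $\alpha = \pi$. Summing over vertices and using the hypothesis that at least one $\alpha_j \neq \pi$ yields the strict inequality $\sum_j (\pi^2 + \alpha_j^2)/(24\pi\alpha_j) > n/12$. Combined with $\chi(\Omega) \leq \chi(\Sigma)$ this produces
\[
a_0(\Sigma) - a_0(\Omega) \geq \frac{\chi(\Sigma) - \chi(\Omega)}{6} + \left(\sum_j \tfrac{\pi^2 + \alpha_j^2}{24\pi\alpha_j} - \tfrac{n}{12}\right) > 0
\]
in both the Neumann and Robin subcases (the $\kappa\ell/(2\pi)$ terms cancel in the Robin case and contribute positively in the Neumann case). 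This strict positivity contradicts isospectrality, completing the proof. The only real obstacle is packaging the AM-GM inequality cleanly; everything else is a direct substitution into the heat trace expansion from Theorem \ref{thm:mainthm}.
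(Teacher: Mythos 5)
Your proof is correct, and the core strategy — rewrite $a_0(\Sigma)$ via Gauss--Bonnet, isolate the corner contribution, and apply the AM--GM inequality $\pi^2+\alpha^2\ge 2\pi\alpha$ with strictness when $\alpha\neq\pi$ — coincides with the paper's. The one genuine variation is your handling of the Dirichlet subcase: you compare $a_{-1/2}$, observing that the half-power coefficient is strictly positive for $\Sigma$ (all edges Robin) and nonpositive for $\Omega$ (Dirichlet), which immediately contradicts isospectrality without touching $a_0$. The paper instead folds Dirichlet in with Neumann and compares $a_0$, relying on the positivity of the $\kappa\,\ell(\partial\Sigma)/(2\pi)$ term together with the corner inequality. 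Your route for that subcase is slightly more economical (it uses a lower-order coefficient and needs no corner-angle information), but it buys nothing essential since the corner inequality is required anyway for the Neumann and Robin subcases; the rest of your argument — deducing $\ell(\partial\Sigma)=\ell(\partial\Omega)$ from $a_{-1/2}$ in the Neumann/Robin cases, then showing $a_0(\Sigma)>a_0(\Omega)$ — matches the paper's line by line.
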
 
	\begin{proof} 
		
		We argue by contradiction.  Assume that $\Sigma$ and $\Omega$ are isospectral.  Then, they must have the same heat trace coefficients.  The terms $a_{-1/2}(\Sigma)$ and $a_{-1/2} (\Omega)$ show that the boundaries of $\Omega$ and $\Sigma$ have the same length.  Hence, since at least one of the angles $\alpha_j$ is not equal to $\pi$, we have 
		\[a_0 (\Sigma) = \frac{\chi(\Sigma)}{6} - \frac{n}{12} + \sum_{j=1} ^n \frac{\pi^2 + \alpha_j^2}{24 \pi \alpha_j}- \frac{\kappa |\pa \Sigma|}{2\pi} >  \frac{\chi(\Sigma)}{6}  -  \frac{\kappa |\pa \Sigma|}{2\pi}.\]
		Above, $|\pa \Sigma|$ is the length of the boundary of $\Sigma$, $n$ is the number of vertices, and $\alpha_j$ is the interior angle at the $j^{th}$ vertex. 
		On the other hand 
		$$a_0(\Omega) = \frac{\chi(\Omega)}{6} - \frac{\kappa |\pa \Omega|}{2\pi} =  \frac{\chi(\Omega)}{6} - \frac{\kappa |\pa \Sigma|}{2\pi}
		< a_0(\Sigma).$$
		This is the desired contradiction.  
	\end{proof} 
	
	For the case of smoothly bounded surfaces, the spectrum also detects a jump in the boundary condition, even without vertices.  This is depicted in Figure \ref{fig:dbcnbc}.
	
	\begin{figure} \includegraphics[height=2in]{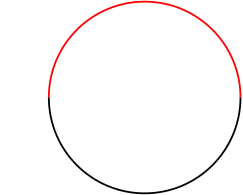} \caption{For a circular domain, impose the Dirichlet boundary on the red arc and the Neumann boundary on the black arc, taking the Friedrichs extension at the intervace. Such a domain is not isospectral to any simply connected smoothly bounded domain which has either the Dirichlet or Neumann condition (but not mixed).  In fact, one may take the red and black pieces of the boundary to be of \em any \em proportions, not necessarily equal.} \label{fig:dbcnbc}   \end{figure}

	\begin{theorem} Let $\Sigma$ be a smoothly bounded surface which has Dirichlet boundary condition and Neumann boundary condition on a single boundary component (that is, a nontrivial Zaremba boundary condition), with a Friedrichs extension at the interface.  Let $\Omega$ be a smoothly bounded surface which has either Neumann or Dirichlet boundary condition (not mixed). Assume that 
		$$\chi(\Omega) \geq \chi(\Sigma).$$
		Then $\Sigma$ and $\Omega$ are not isospectral.
	\end{theorem}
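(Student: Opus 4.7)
The plan is to view $\Sigma$ as a curvilinear polygonal domain in which every vertex is a \emph{phantom} vertex (interior angle $\pi$) located precisely at a point where the boundary condition jumps between Dirichlet and Neumann. Since phantom vertices are explicitly allowed in Definition \ref{def:curvpoly}, Theorem \ref{thm:mainthm} then applies directly to $\Sigma$, and I can read off the first three heat trace coefficients. The whole argument will come down to comparing the constant term $a_0$, using the fact that each D--N phantom vertex contributes a universal negative number while the corresponding coefficient for $\Omega$ has no vertex term at all.

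The first step is bookkeeping. Let $k$ denote the number of jump points on $\partial\Sigma$. Because the boundary condition alternates as one traverses the single affected boundary component, $k$ is a positive even integer, so $k\geq 2$. At each jump point the angle is $\alpha_j=\pi$ and exactly one adjacent edge is Dirichlet, so $j\in V_{\neq}$ and the vertex contribution equals
\[
\frac{-\pi^2-2\pi^2}{48\pi\cdot\pi}=-\frac{1}{16}.
\]
All other ``phantom vertices'' where the BC does not change contribute $\tfrac{\pi^2-\pi^2}{24\pi^2}=0$, so they can safely be ignored. Applying Theorem \ref{thm:mainthm} to $\Sigma$ and using the Gauss--Bonnet identity $\int_\Sigma K+\int_{\partial\Sigma}k_g=2\pi\chi(\Sigma)$, I obtain
\[
a_0(\Sigma)=\frac{\chi(\Sigma)}{6}-\frac{k}{16}\leq \frac{\chi(\Sigma)}{6}-\frac{1}{8}.
\]

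For $\Omega$, since its boundary is smooth and carries a single boundary condition, the same theorem combined with Gauss--Bonnet yields
\[
a_0(\Omega)=\frac{\chi(\Omega)}{6}+\frac{1}{2\pi}\int_{\partial\Omega}\kappa(x)\,dx,
\]
where the last integral is absent for Dirichlet or Neumann and strictly positive for Robin with positive parameter. In every admissible case, $a_0(\Omega)\geq \tfrac{\chi(\Omega)}{6}$. The hypothesis $\chi(\Omega)\geq\chi(\Sigma)$ then gives the chain
\[
a_0(\Omega)\geq \frac{\chi(\Omega)}{6}\geq \frac{\chi(\Sigma)}{6}>\frac{\chi(\Sigma)}{6}-\frac{1}{8}\geq a_0(\Sigma),
\]
so $a_0(\Omega)>a_0(\Sigma)$, which rules out isospectrality.

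The only real subtlety, and therefore the ``hard part'' of the write-up, is justifying the reinterpretation of $\Sigma$ as a polygonal domain with phantom vertices at the BC-jump points so that Theorem \ref{thm:mainthm} is literally applicable. Once this is in place, the argument reduces to noting that mixing Dirichlet and Neumann along one smooth arc \emph{always} produces an even number $k\geq 2$ of jumps, each contributing the universal negative quantity $-\tfrac{1}{16}$ to $a_0$, whereas no single-condition smooth domain $\Omega$ can make $a_0(\Omega)$ dip below $\chi(\Omega)/6$. It is worth remarking in passing that the combinatorial parity argument fails if jumps were allowed to be confined to different boundary components; this is why the hypothesis ``on a single boundary component'' appears in the statement.
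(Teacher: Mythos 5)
Your proposal is correct and follows essentially the same route as the paper: treat the BC-jump points as phantom vertices of angle $\pi$ in the $V_{\neq}$ class, apply Theorem \ref{thm:mainthm} plus Gauss--Bonnet to get $a_0(\Sigma)=\chi(\Sigma)/6-k/16$, compare with $a_0(\Omega)\geq\chi(\Omega)/6$. The only difference is cosmetic: the paper writes the jump count as $n\geq 1$, while you observe the sharper (but unnecessary) fact that it is even and $\geq 2$; both suffice to conclude $a_0(\Omega)>a_0(\Sigma)$.
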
 
	
	\begin{proof} For $\Sigma$, the heat trace coefficient 
		$$a_0 (\Sigma) = \frac{\chi(\Sigma)}{6} - \frac{n}{16},$$
		where $n$ is the number of times the boundary condition jumps between Dirichlet and Neumann.  We obtain this because the boundary is smooth, and hence the angle at the ``vertex'' where the boundary condition jumps is equal to $\pi$.  On the other hand, 
		$$a_0 (\Omega) \geq \frac{\chi(\Omega)}{6} \geq \frac{\chi(\Sigma)}{6} > a_0 (\Sigma),$$
		since $n\geq 1$. 
	\end{proof} 
	
	In conclusion, we determine contexts in which entirely mixed Dirichlet-Neumann vertices are spectrally determined.  In particular, this shows that we may distinguish between the presence of mixed-boundary condition vertices versus vertices with the same boundary condition on both sides; see Figure \ref{fig:squaremix}.  
	
	\begin{theorem} Assume that $\Sigma$ is a surface with vertices with mixed Dirichlet and Neumann boundary condition such that each vertex has Dirichlet on one side and Neumann on the other side.  Moreover, assume that all interior angles are less than $\frac{\pi}{\sqrt 2}$.  Let $\Omega$ be any surface which is either: 
		\begin{enumerate} 
			\item smoothly bounded and with either the Dirichlet or Neumann, but not mixed, boundary condition;
			\item a surface with vertices with either the Dirichlet or Neumann, but not mixed, boundary condition.
		\end{enumerate} 
		Assume further that $\chi(\Sigma) \leq \chi(\Omega)$.  Then $\Sigma$ and $\Omega$ are not isospectral.
	\end{theorem}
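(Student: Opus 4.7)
The plan is to show that $\Sigma$ and $\Omega$ cannot be isospectral by comparing the $a_0$ coefficients of their short-time heat trace expansions, as furnished by Theorem \ref{thm:mainthm}. Since the heat trace coefficients are spectral invariants, any strict inequality between $a_0(\Sigma)$ and $a_0(\Omega)$ forces non-isospectrality, regardless of what happens at $a_{-1}$ and $a_{-1/2}$. The whole argument reduces to a sign analysis of the corner contributions.

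Using the Gauss--Bonnet-reformulated version of $a_0$ in the remark after Theorem \ref{thm:mainthm}, I will write each per-corner contribution with interior angle $\gamma$ as
\[
f_=(\gamma) := -\frac{\pi-\gamma}{12\pi} + \frac{\pi^2 - \gamma^2}{24\pi\gamma},
\qquad
f_\neq(\gamma) := -\frac{\pi-\gamma}{12\pi} + \frac{-\pi^2 - 2\gamma^2}{48\pi\gamma}.
\]
Every vertex of $\Sigma$ sits in $\mathcal V_\neq$ by hypothesis, so
\[
a_0(\Sigma) = \tfrac{1}{6}\chi(\Sigma) + \sum_{j=1}^{n} f_\neq(\alpha_j),
\]
while for $\Omega$, where the boundary condition is not mixed, every vertex (if any) lies in $\mathcal V_=$ and any Robin integral is non-negative since $\kappa>0$. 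Thus
\[
a_0(\Omega) = \tfrac{1}{6}\chi(\Omega) + \sum_{k} f_=(\beta_k) + \tfrac{1}{2\pi}\int_{\partial\Omega}\kappa\,dx,
\]
with the Robin term present only in the Robin case and the sum over corners empty in case (1).

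The algebraic heart of the proof is the simplification
\[
f_=(\beta) = \frac{(\beta-\pi)^2}{24\pi\beta} \geq 0,
\qquad
f_\neq(\alpha) = \frac{2\alpha^2-4\pi\alpha-\pi^2}{48\pi\alpha}.
\]
The numerator of $f_\neq$ is a quadratic in $\alpha$ whose discriminant yields roots $\pi(1\pm\sqrt{6}/2)$, and it is strictly negative for every $\alpha \in (0,2\pi)$. Solving $f_\neq(\alpha) = -\tfrac{1}{12}$ gives exactly $\alpha = \pi/\sqrt{2}$, so the hypothesis $\alpha_j < \pi/\sqrt{2}$ upgrades the inequality to the uniform bound $f_\neq(\alpha_j) < -\tfrac{1}{12}$, producing a quantitative gap $a_0(\Omega) - a_0(\Sigma) > n/12$.

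Combining these observations with the hypothesis $\chi(\Sigma) \leq \chi(\Omega)$, I will conclude
\[
a_0(\Sigma) = \tfrac{1}{6}\chi(\Sigma) + \sum_j f_\neq(\alpha_j) < \tfrac{1}{6}\chi(\Sigma) \leq \tfrac{1}{6}\chi(\Omega) \leq a_0(\Omega),
\]
contradicting isospectrality. The main obstacle is really the algebraic one: recognizing the factorization $f_=(\beta) = (\beta-\pi)^2/(24\pi\beta)$ and verifying strict negativity of the numerator of $f_\neq$ on $(0,2\pi)$. Once these are in hand, the remaining work is bookkeeping between the two alternatives for $\Omega$ (smoothly bounded versus with corners, with or without Robin terms), all of which contribute quantities of the correct sign to make the chain of inequalities strict.
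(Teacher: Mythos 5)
Your proof is correct and follows the paper's strategy: compare the $a_0$ heat-trace coefficients via the Gauss--Bonnet rewriting of Theorem \ref{thm:mainthm} and reduce to a sign analysis of the per-corner contributions. One tidy improvement you make is the factorization $f_=(\beta) = (\beta-\pi)^2/(24\pi\beta) \ge 0$, which absorbs both alternatives for $\Omega$ (smooth boundary vs.\ corners, with or without a Robin integral) into the single estimate $a_0(\Omega) \ge \tfrac{1}{6}\chi(\Omega)$, whereas the paper treats cases (1) and (2) separately and invokes ``at least one $\beta_j\neq\pi$'' for case (2). You also correctly observe that $f_{\neq}(\alpha) = (2\alpha^2 - 4\pi\alpha - \pi^2)/(48\pi\alpha) < 0$ on all of $(0,2\pi)$, since the roots $\pi(1\pm\sqrt{6}/2)$ of the numerator lie outside that interval; as a consequence your final chain $a_0(\Sigma) < \tfrac16\chi(\Sigma) \le \tfrac16\chi(\Omega) \le a_0(\Omega)$ never invokes the hypothesis $\alpha_j < \pi/\sqrt{2}$ at all. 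That hypothesis only buys the sharper per-corner bound $f_{\neq}(\alpha_j) < -1/12$ and hence the quantitative gap $a_0(\Omega) - a_0(\Sigma) > n/12$, matching the paper's stronger intermediate estimate $a_0(\Sigma) < \tfrac16\chi(\Sigma) - \tfrac{n}{12}$. One expository remark: you announce that gap as if it were the mechanism, and then write a concluding chain that does not use it -- either lean on the quantitative gap or drop the remark, since as written it dangles.
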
 
	
	\begin{proof}  We compute the heat trace coefficient for $\Sigma$, 
		$$a_0 (\Sigma) = \frac{\chi(\Sigma)}{6} - \frac{n}{12} + \sum_{j=1} ^n \frac{-\pi^2 + 2 \alpha_j^2}{48\pi \alpha_j}.$$
		Above, $n$ is the number of vertices, and $\alpha_j$ is the interior angle at the $j^{th}$ vertex.  By the assumption that $\alpha_j < \frac{\pi}{\sqrt 2}$ for all $j$ we have 
		$$a_0 (\Sigma) < \frac{\chi(\Sigma)}{6} - \frac{n}{12}.$$
		On the other hand, if $\Omega$ has smooth boundary and Dirichlet or Neumann boundary condition (not mixed), we have 
		$$a_0 (\Omega) \geq \frac{\chi(\Omega)}{6} \geq \frac{\chi(\Sigma)}{6} > a_0 (\Sigma).$$
		This shows that $\Sigma$ and $\Omega$ are not isospectral.  
		
		In case $\Omega$ has $m$ vertices, and a single fixed boundary condition then 
		$$a_0 (\Omega) \geq  \frac{\chi(\Omega)}{6} - \frac{n}{12}  + \sum_{j=1} ^m \frac{\pi^2 + \beta_j^2}{24 \pi \beta_j}.$$
		Here the interior angle at the $j^{th}$ vertex is $\beta_j$.  By the assumption that $\Omega$ has vertices, at least one $\beta_j \neq \pi$, and therefore 
		$$a_0 (\Omega) > \frac{\chi(\Omega)}{6} \geq a_0 (\Sigma).$$
		Consequently, $\Sigma$ and $\Omega$ are not isospectral.  
	\end{proof}

	We conclude with a familiar example which satisfies the hypotheses of the preceding theorem.  Let us consider domains in the plane which do not have holes.  Let $\Sigma$ be a rectangular domain with the Dirichlet boundary condition on two opposite sides, and Neumann boundary condition on the other two sides; see Figure \ref{fig:squaremix}.  Then, the interior angles are all equal to $\frac \pi 2 < \frac{\pi}{\sqrt 2}$.  Consequently, the theorem shows that such a domain is not isospectral to \em any \em smoothly bounded domain with either Dirichlet or Neumann (but not mixed) boundary condition, nor is it isospectral to \em any \em domain with corners but which has a single fixed boundary condition, either Dirichlet or Neumann.  An analogous result holds for any polygonal domain which has an even number of sides and alternating Dirichlet and Neumann boundary conditions, such that the interior angles do not exceed $\frac{\pi}{\sqrt 2}$.  
	
	\begin{figure} \includegraphics[height=2in]{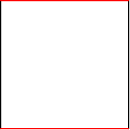} \caption{For a rectangular domain we impose the Dirichlet boundary condition on the red sides and Neumann boundary condition on the black sides.} \label{fig:squaremix}   \end{figure} 
	
	\begin{appendix} 
		\section{Calculation of the Green's function for Dirichlet, Neumann, and mixed Dirichlet-Neumann boundary conditions} \label{app1} 
		
		Here we will explain how to obtain the explicit expression for the Green's functions introduced in Section \ref{Green's functions}. Inspired by Fedosov \cite{fedosov}, we consider the Kontorovich-Lebedev transform
		\begin{equation*}
			F(x)=\int_{0}^{\infty}K_{ix}(z) f(z) \frac{dz}{z}
		\end{equation*}
		and its inverse transform
		\begin{equation*}
			f(y)=\frac{2}{\pi^2}\int_{0}^{\infty}x\sinh(\pi x)K_{ix}(y)F(x)dx.
		\end{equation*}
		Above, $K_\nu$ is the modified Bessel function of second kind. At least formally 
		\begin{align*}
			f(r_0 \sqrt{s} )=\int_{0}^{\infty}\frac{2}{\pi^2r}\int_{0}^{\infty}x\sinh(\pi x)K_{ix}(r \sqrt{s})K_{ix}(r_0 \sqrt{s})dx\cdot f(r \sqrt{s})dr.
		\end{align*}
		Hence, in the distributional sense we obtain
		\begin{equation}\label{delta_for_r}
			\frac{2}{\pi^2r}\int_{0}^{\infty}x\sinh(\pi x)K_{ix}(r\sqrt{s})K_{ix}(r_0\sqrt{s})dx=\delta(r-r_0).
		\end{equation}
		
		We will search for the Green's function of the following form $G(s,r,\phi,r_0,\phi_0)=$
		\begin{equation}\label{G_est}
			\frac{2}{\pi^2}\int_{0}^{\infty}K_{i\mu}(r\sqrt{s})K_{i\mu}(r_0\sqrt{s})\mu\sinh(\pi\mu)\Phi(\mu,\phi,\phi_0)d\mu.
		\end{equation}
		Inserting \eqref{G_est} into \eqref{Green_fnc} and using the definition of $K_\nu$, we want to solve: 

		\begin{multline*}
			\frac{2}{\pi^2}\int_{0}^{\infty}K_{i\mu}(r \sqrt{s})K_{i\mu}(r_0 \sqrt{s})\mu\sinh(\pi\mu)\frac{1}{r^2}\left[-(i\mu)^2\Phi(\mu,\phi,\phi_0)-\Phi''(\mu,\phi,\phi_0)\right]d\mu\\
			=\frac{1}{r}\delta(r-r_0)\delta(\phi-\phi_0).
		\end{multline*}
		By \eqref{delta_for_r}, it will suffice to find $\Phi$ such that
		\begin{multline} \label{eq:app_slp}
			-(i\mu)^2\Phi(\mu,\phi,\phi_0)-\Phi''(\mu,\phi,\phi_0)=\delta(\phi-\phi_0), \\ 
			\alpha\Phi(0)+\beta\Phi'(0)=0, \quad  \alpha\Phi(\gamma)+\beta\Phi'(\gamma)=0, 
		\end{multline}
		with either $\alpha=1$ and $\beta = 0$ or $\alpha = 0$ and $\beta = 1$. Solving the first case will yield \eqref{DirichletGK}, and the second case will yield \eqref{NeumannGK}.  For this purpose, we note that 
		\[ 
		\Phi_1(\phi):=\alpha\sinh \phi\mu-\mu\beta\cosh\phi\mu, \quad 
		\Phi_2(\phi):=\alpha\sinh (\phi-\gamma)\mu-\mu\beta\cosh(\phi-\gamma)\mu 
		\] 
		are solutions of \eqref{eq:app_slp} and satisfy the first and second boundary conditions, respectively. Hence, the Green's function is obtained by inserting 
		\begin{equation} \label{eq:app_green} 
			\Phi:=-\frac{\Phi_1(\phi)\Phi_2(\phi_0)}{W(\Phi_1,\Phi_2)},
			\qquad
			\text{for }\phi<\phi_0,
		\end{equation}
		into \eqref{G_est} where above $W(\Phi_1,\Phi_2)$ is the Wronskian of $\Phi_1$ and $\Phi_2$.
		
		Similarly, with the Dirichlet-Neumann mixed boundary condition, the Green's function is of the form \eqref{G_est}, but in this case $\Phi$ solves 
		\begin{equation*}
			-\Phi''+\mu^2\Phi=0, \qquad \Phi(0)=0, \qquad \Phi'(\gamma)=0.
		\end{equation*}
		Now defining 
		\begin{equation*}
			\Phi_1(\phi):=\sinh(\phi\mu), 
			\qquad
			\Phi_2(\phi):=\cosh((\phi-\gamma)\mu)
		\end{equation*}
		the Green's function is obtained from \eqref{eq:app_green} inserted into \eqref{G_est}.

		\section{The corner contribution using Cheeger's series expression for the heat kernel on an infinite sector}  \label{s:altcalcs} 
		Here we show how, given the contribution for a D-D corner, which has been computed in \cite{vdbs} and \cite{nrs1}, we may use the series expression of the heat kernel from \cite{cheeger} to compute the contribution for both N-N and D-N corners.  For a corner of angle $\alpha$, the corner contribution is obtained by computing the renormalized integral, (see also \cite{hmm}, \cite{hassell}) 
		\[\textrm{f.p.}_{\epsilon=0}\int_{R=0} ^{1/\epsilon} \int_{0}^{\alpha}\frac 12R\exp[-\frac 12R^2]\sum_{j=1}^{\infty}I_{\mu_j}(\frac 12R^2)|\phi_j(\theta)|^2\, d\theta\, dR.\]
		Since the cross-sectional eigenfunctions, $\phi_j$, have unit $\cL^2$ norm, this simplifies to 
		\begin{equation}\label{eq:gencornercontrib}
			\textrm{f.p.}_{\epsilon=0}\int_{R=0} ^{1/\epsilon} \frac 12Re^{-\frac 12R^2}\sum_{j=1}^{\infty}I_{\mu_j}(\frac 12R^2)\, dR.
		\end{equation}

		In the Dirichlet-Dirichlet case, $\mu_j=j\pi/\alpha$ and \eqref{eq:gencornercontrib} becomes
		\begin{equation}\label{eq:dircornercontrib}
			\textrm{f.p.}_{\epsilon=0}\int_{R=0} ^{1/\epsilon}\frac 12Re^{-\frac 12R^2}\sum_{j=1}^{\infty}I_{j\pi/\alpha}(\frac 12R^2)\, dR.
		\end{equation}
		By \cite{vdbs} and \cite{nrs1},  \eqref{eq:dircornercontrib} is equal to
		\begin{equation}\label{eq:vdbs}
			\frac{\pi^2-\alpha^2}{24\pi\alpha}.
		\end{equation}
		
		In the Neumann-Neumann case, the only difference is that there is a zero eigenvalue. So the difference of the corner contributions in the D-D and N-N cases is
		\begin{equation}\label{eq:dirneumdifference}
			\textrm{f.p.}_{\epsilon=0}\int_0^{1/\epsilon}\frac 12Re^{-\frac 12R^2}I_0(\frac 12R^2)\, dR.
		\end{equation}
		This integral may be evaluated directly.  First make a substitution in the integral setting $u=\frac 12 R^2$, so that it becomes 
		$$\textrm{f.p.}_{\epsilon=0}\int_0^{\frac{1}{2\epsilon^2}}\frac 12 e^{-u} I_0(u)\, du.$$
		In \cite[5.5]{polyakov}, a primitive for the integrand is obtained, 
		$$g(u) := e^{-u} u (I_0(u) + I_1 (u)) \implies g'(u) = e^{-u} I_0 (u).$$
		Since $I_0(0) = I_1(0) = 0$, the integral above is therefore 
		\begin{equation}
			\textrm{f.p.}_{\epsilon=0} \frac 1 2 \left[\frac{1}{2\epsilon^2} e^{-\frac{1}{2\epsilon^2}} \left(I_0 \left(\frac{1}{2\epsilon^2}\right)+I_1\left(\frac{1}{2\epsilon^2}\right) \right) \right].
		\end{equation}
		
		However, both $I_0(z)$ and $I_1(z)$ have expansions of the form
		\[e^z z^{-1/2}(C_0+C_1z^{-1}+C_2z^{-2}+\dots)\]
		as $z\to\infty$. Substituting these expansions for the Bessel functions above, there are only odd powers of $\epsilon$ in the expansion. Therefore the coefficient of the $\epsilon^0$ power is zero, and the finite part is zero. This shows that \eqref{eq:dirneumdifference} equals zero and the Neumann-Neumann corner contribution is \eqref{eq:vdbs}, the same as the Dirichlet-Dirichlet contribution.
		
		In the Dirichlet-Neumann cases, $\mu_j=(j+1/2)\pi/\alpha$, starting at $j=0$, so we get
		\begin{equation}\label{eq:dirneumcornercontrib}
			\textrm{f.p.}_{\epsilon=0}\int_0^{1/\epsilon}\frac 12Re^{-\frac 12R^2}\sum_{j=0}^{\infty}I_{(j+1/2)\pi/\alpha}(\frac 12R^2)\, dR.
		\end{equation}
		Observe that, since renormalized integrals are linear, this equals
		\begin{equation}\label{eq:simplified} \begin{gathered} 
				\textrm{f.p.}_{\epsilon=0}\int_0^{1/\epsilon}\frac 12Re^{-\frac 12R^2}\sum_{j=1}^{\infty}I_{j\pi/(2\alpha)}(\frac 12R^2)\, dR \\ 
				-
				\textrm{f.p.}_{\epsilon=0}\int_0^{1/\epsilon}\frac 12Re^{-\frac 12R^2}\sum_{j=1}^{\infty}I_{j\pi/\alpha}(\frac 12R^2)\, dR.
			\end{gathered} 
		\end{equation}
		We recognize this to be \eqref{eq:dircornercontrib} for angle $2\alpha$ minus \eqref{eq:dircornercontrib} for angle $\alpha$. We  therefore obtain the Dirichlet-Neumann mixed corner contribution is
		\begin{equation}\label{eq:vdbsDN}
			\frac{\pi^2-(2\alpha)^2}{48\pi\alpha}-\frac{\pi^2-\alpha^2}{24\pi\alpha}=\frac{-\pi^2-2\alpha^2}{48\pi\alpha}.
		\end{equation}
		
	\end{appendix}

\end{document}